\newtheorem{theorem}{Theorem}[section]
\newtheorem{lemma}[theorem]{Lemma}
\newtheorem{proposition}[theorem]{Proposition}
\newtheorem{definition}[theorem]{Definition}
\newtheorem{remark}[theorem]{Remark}
\newtheorem{corollary}[theorem]{Corollary}
\newtheorem{problem}{Problem}
\renewcommand*{\div}{\ensuremath{\mathrm{div\,}}}
\def\deg{\textrm{deg}\,}
\def\eps{\varepsilon}
\def\dist{{\rm dist}}
\def\sym{{\mathrm{sym\,}}}
\def\S{\mathcal{S}}
\def\D{\mathcal{D}}
\def\R{\mathbb{R}}
\def\T{\mathbb{T}}
\def\N{\mathbb{N}}
\newcommand\e{{\varepsilon}}
\begin{document}

\title[The $h$-principle and fluid dynamics]{The $h$-principle and the equations of fluid dynamics}

\author{Camillo De Lellis}
\address{Institut f\"ur Mathematik, Universit\"at Z\"urich, 
CH-8057 Z\"urich}
\email{camillo.delellis@math.unizh.ch}

\author{L\'aszl\'o Sz\'ekelyhidi Jr.}
\address{Hausdorff Center for Mathematics, Universit\"at Bonn, 
D-53115 Bonn}
\email{laszlo.szekelyhidi@hcm.uni-bonn.de}

\begin{abstract}
In this note we survey some recent results for the
Euler equations in compressible and incompressible
fluid dynamics. The main point of all these
theorems is the surprising fact that a suitable 
variant of Gromov's $h$-principle holds in several
cases.
\end{abstract}

\maketitle

%%%%%%%%%%%%%%%%%%%%%%%%%%%%%%%
\section{Introduction}
%%%%%%%%%%%%%%%%%%%%%%%%%%%%%%%

The starting point of this note is the Cauchy problem for the incompressible Euler equations,
\begin{equation}\label{e:Cauchy}
\left\{
\begin{array}{l}
\partial_t v + {\rm div}_x v\otimes v + \nabla p = 0,\\ 
{\rm div}_x\, v = 0, \\
v (0, \cdot
) = v_0,
\end{array}\right.
\end{equation}
where the unknowns $v$ and $p$ are, respectively, a vectorfield and a scalar function
defined on $\R^n\times [0,T)$. These fundamental equations were derived over 250 years ago by Euler and since then have played a major role in fluid dynamics.
There are several oustanding open problems connected to \eqref{e:Cauchy} and an extensive literature
about them (see for instance the surveys \cite{BardTiti}, \cite{ConstSurvey}, \cite{Fefferman}). 
In three space dimensions
little is known about smooth solutions apart from classical short time existence and uniqueness (see \cite[p 422]{Lichtenstein})  and the celebrated blow-up criterion of Beale-Kato-Majda \cite{BeKaMaj} (further references for both results are
\cite{EbinMarsden}, \cite{Kato}, \cite{Koch} and also the books \cite{MajdaBook} and \cite{MarcPulv}).
On the other hand weak solutions are known to be badly behaved from the point of view of 
Hadamard's well-posedness theory: in the groundbreaking paper \cite{Scheffer93} V.~Scheffer proved the
existence of a nontrivial weak solution compactly supported in time. Nevertheless, weak solutions have 
been studied for
their expected relevance to the theory of turbulence (see \cite{Onsager}, \cite{ConstantinETiti}, \cite{Eyink},
\cite{Shnirelmandecrease}). 

In this survey we argue that the nonuniqueness of weak solutions for the incompressible Euler
equations should be viewed as a suitable variant of the $h$-principle. The original $h$-principle
of Gromov pertains to various problems in differential geometry, where one expects high flexibility
of the moduli space of solutions due to the underdetermined nature of the problem. It was not expected that the same principle and similar methods
could be applied to problems in mathematical physics (we quote Gromov's speech at the
Balzan Prize \cite{GromovBalzan}: {\em The class
of infinitesimal laws subjugated by the homotopy principle is wide, but it does not
include most partial differential equations (expressing infinitesimal laws) of physics
with a few exceptions in favour of this principle leading to unexpected solutions. In
fact, the presence of the h-principle would invalidate the very idea of a physical law as
it yields very limited global information effected by the infinitesimal data}. See also the
introduction in the book \cite{Eliashberg}). 

As pointed out in the important paper \cite{MS99} by S.~M\"uller and V.~\v{S}verak
the existence theory for differential inclusions (see also \cite{BressanFlores,Cellina,DacorognaMarcellini97,Bernd,KMS02}) has a close
relation to the $h$-principle. In particular the method of convex integration, introduced by Gromov and extended
by M\"uller and \v{S}verak to Lipschitz mappings, provides a very
powerful tool to construct solutions. 
In the paper \cite{DS1} these tools were suitably modifed and used for the first time to
explain Scheffer's nonuniqueness theorem. It was also noticed immediately that this
approach allows to go way beyond the result of Scheffer. Indeed it has lead to new
developments for several equations in fluid dynamics. The goals of this note is to survey
these results, list some open questions and point our some new directions. The most
exciting ones address the question of whether these methods might give us some better
understanding of the theory of turbulence.

%%%%%%%%%%%%%%%%%%%%%%%%%%%%%%%
\section{Weak solutions}
%%%%%%%%%%%%%%%%%%%%%%%%%%%%%%%

We start with a survey of the non-uniqueness theorems for weak solutions of \eqref{e:Cauchy}.
By a weak solution we mean, as usual, an $L^2_{loc}$ vectorfield which solves the equations in the sense of distributions.
\begin{definition}\label{d:weaksolution}
A vectorfield $v\in L^2_{loc}(\R^n\times (0,T))$ is a weak solution of the incompressible Euler equations if
\begin{equation}\label{e:wie1}
\int_0^T\int_{\R^n}\partial_t\varphi\cdot v+\nabla\varphi:v\otimes v\,dxdt=0
\end{equation}
for all $\varphi\in C_c^{\infty}(\R^n\times (0,T);\R^n)$ with $\div\varphi=0$
and
\begin{equation}\label{e:wie2}
\int_0^T\int_{\R^n}v\cdot \nabla\psi\,dxdt=0 \qquad \mbox{for all $\psi\in C_c^{\infty}(\R^n\times (0,T))$}.
\end{equation}
When $v_0\in L^2(\R^n)$, the vectorfield $v$ is a weak solution of \eqref{e:Cauchy} if
\eqref{e:wie1} can be replaced by  
\begin{equation}\label{e:wie3}
\int_0^T\int_{\R^n} \partial_t\varphi\cdot v+\nabla\varphi:v\otimes v\,dxdt+\int_{\R^n} \varphi(x,0)\cdot v_0(x)\,dx=0
\end{equation}
for all $\varphi\in C_c^{\infty}(\R^n\times [0,T);\R^n)$ with $\div\varphi=0$.
\end{definition} 

%%%%%%%%%%%%%%%%%%%%%%%%%%%%%%%
\subsection{Weak solutions with compact support in time}
%%%%%%%%%%%%%%%%%%%%%%%%%%%%%%%
As already mentioned, the first nonuniqueness result for weak solutions of \eqref{e:Cauchy} is due to Scheffer in
his groundbreaking paper \cite{Scheffer93}. The main theorem of \cite{Scheffer93} states the existence of a nontrivial weak solution in $L^2(\R^2\times \R)$ with compact support in space and time. Later on Shnirelman in \cite{Shnirelman1} gave a different proof of the existence of a nontrivial weak solution in $L^2(\T^2\times\R)$ with compact support in time.  In these constructions
it is not clear if the solution belongs to the energy space (we refer to the next section for the relevant definition).
In the paper \cite{DS1} we provided a relatively simple proof of the following stronger statement.

\begin{theorem}[Non-uniqueness of weak solutions]\label{t:1}
There exist infinitely many compactly supported weak solutions of the incompressible Euler equations
in any space dimension. In particular there are infinitely many solutions $v\in L^\infty\cap L^2$ to \eqref{e:Cauchy}
for $v_0=0$ and arbitrary $n\geq 2$.  
\end{theorem}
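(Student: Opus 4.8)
The strategy is to reformulate \eqref{e:Cauchy} as a differential inclusion and then to run a convex integration / Baire category scheme. First I would enlarge the list of unknowns: besides the velocity $v$, introduce a symmetric trace-free matrix field $S$ and a scalar $q$, and replace \eqref{e:Cauchy} by the \emph{linear}, constant-coefficient system
\[
\partial_t v + \div S + \nabla q = 0,\qquad \div v = 0,
\]
together with the \emph{pointwise} constraint $S = v\otimes v - \tfrac{|v|^2}{n}\,\Id$ (and $q = p + \tfrac{|v|^2}{n}$); a field solving the linear system and satisfying this constraint is, upon testing against divergence-free vector fields, a weak solution of Euler in the sense of Definition~\ref{d:weaksolution}, and conversely. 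Fixing $r>0$, I would relax the constraint to the \emph{open} condition $e(v,S):=\tfrac n2\,\lambda_{\max}\big(v\otimes v-S\big)<\tfrac{r^2}{2}$, which defines the set $U$ of \emph{subsolutions}. Since $e(v,S)\ge\tfrac12|v|^2$, with equality precisely on the constraint manifold, $U$ is bounded and forces $|v|<r$, while any point of $\overline U$ with $|v|=r$ lies automatically on the constraint manifold with $|v|=r$, i.e.\ in the target set $K_r$.

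The heart of the matter is the analysis of plane waves for the linear system: $(v,S,q)=(\bar v,\bar S,\bar q)\,h(\xi\cdot x+\tau t)$ solves it precisely when $(\bar v,\bar S,\bar q)$ lies in the \emph{wave cone} $\Lambda$. One must compute the projection of $\Lambda$ onto the $(v,S)$ variables and establish the following \emph{geometric lemma}: there is a dimensional constant $c_n>0$ such that every $(v,S)\in U$ admits a direction $(\bar v,\bar S)$ in the projected wave cone for which the whole segment $\{(v,S)+t(\bar v,\bar S):|t|\le1\}$ is contained in $U$ and $|\bar v|\ge c_n\,\dist\big((v,S),K_r\big)$ -- equivalently, $U$ lies inside a suitable $\Lambda$-convex hull of $K_r$, with a quantitative lower bound on the amplitude of admissible oscillations. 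I expect this lemma -- the explicit identification of the hull of $K_r$ together with the amplitude estimate -- to be the main obstacle, since it is the only point at which the specific algebraic form of the Euler nonlinearity is used.

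With this in hand the construction proceeds by Baire category. Pick a bounded open $\Omega\subset\R^n\times(0,T)$ with $\overline\Omega\cap\{t=0\}=\emptyset$, let $X_0$ be the set of smooth fields $(v,S,q)$ compactly supported in $\Omega$ that solve the linear system and satisfy $(v(x,t),S(x,t))\in U$ everywhere, and let $X$ be the closure of $X_0$ in the weak $L^2$ topology, which on the relevant bounded set is metrizable and complete; $X\ni(0,0,0)$ is nonempty. The functional $v\mapsto\int_\Omega|v|^2\,dx\,dt$ is weakly lower semicontinuous on $X$, hence of Baire class one, so its set of continuity points is residual. At a continuity point $v$, weak convergence $v_k\rightharpoonup v$ together with convergence of the $L^2$ norms forces $v_k\to v$ strongly, hence $S_k\to v\otimes v-\tfrac{|v|^2}{n}\Id$ strongly; passing to the limit in the linear system (the pressure drops out on divergence-free test fields) shows that $v$ is a weak solution of the Euler equations. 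Moreover, the geometric lemma yields a \emph{stage estimate}: any subsolution with $\int_\Omega(r^2-|v|^2)\ge\delta>0$ can be perturbed inside $X_0$, by superimposing on a fine partition of $\Omega$ localized high-frequency plane waves (built from a potential for the linear system, with local amplitudes controlled from below by the local distance to $K_r$ and kept small enough to remain in $U$), into another subsolution that is close to the original in the weak metric but has $\int_\Omega|v|^2$ larger by a fixed amount $\beta(\delta)\ge c\,\delta^2/|\Omega|$ (by Cauchy--Schwarz and the amplitude bound). Applying this to a sequence $v_k\to v$ approaching a continuity point $v$ would contradict continuity unless $\int_\Omega(r^2-|v|^2)=0$, i.e.\ $|v|=r$ a.e.\ in $\Omega$; extending such a $v$ by $0$ outside $\Omega$ produces a compactly supported weak solution of \eqref{e:Cauchy} with $v_0\equiv0$, lying in $L^\infty\cap L^2$ because $|v|=r$ on $\Omega$ and $v$ vanishes elsewhere.

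Finally, to obtain \emph{infinitely many} solutions I would repeat the scheme for every $r>0$: for each $r$ the space $X$ is nonempty, its continuity points are dense, and $v\equiv0$ is not one of them (it has vanishing energy, whereas every continuity point has $\int_\Omega|v|^2=r^2|\Omega|$), so $X$ contains a nonzero solution; distinct radii give solutions of distinct energy, hence infinitely -- indeed uncountably -- many, all vanishing near $\{t=0\}$ and therefore all attaining $v_0=0$. Run from an arbitrary smooth strict subsolution in place of $0$, and with $\Omega$ and $v_0$ arbitrary, the same argument gives the non-uniqueness statement in full generality.
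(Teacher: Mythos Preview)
Your proposal is correct and follows essentially the same route as the paper (and \cite{DS1}): the reformulation as a differential inclusion with the linear relations \eqref{e:LR} and constraint set $K_r$, the geometric lemma on $\Lambda$-segments with quantitative amplitude (which is precisely the Perturbation Property~(P) of Section~\ref{s:applications}), and the Baire category argument on the energy functional. The only quibble is expositional: the identification $S=v\otimes v-\tfrac{|v|^2}{n}\Id$ at a continuity point should be deduced \emph{after} the stage estimate gives $|v|=r$ a.e.\ (so that $(v,S)\in\overline{U}$ with $|v|=r$ forces $(v,S)\in K_r$), not before.
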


%%%%%%%%%%%%%%%%%%%%%%%%%%%%%%%
\subsection{The Reynolds stress and subsolutions}\label{ss:reynolds}
%%%%%%%%%%%%%%%%%%%%%%%%%%%%%%%

The proof in \cite{DS1} is based on the notion of subsolution. In order to motivate its definition, let us recall the concept of Reynolds stress. 
It is generally accepted that the appearance of high-frequency oscillations in the velocity field is the main responsible for turbulent phenomena in
incompressible flows. One related major problem is therefore to understand the dynamics of the coarse-grained, in other words macroscopically averaged, velocity field.
If $\overline{v}$ denotes the macroscopically averaged velocity field, then it satisfies
\begin{equation}\label{e:Reynolds}
\begin{array}{l}   
\partial_t\overline{v}+\div(\overline{v}\otimes\overline{v}+ R)+\nabla\overline{p} =0\\
\div\overline{v} =0,
\end{array}
\end{equation}
where 
$$
R=\overline{v\otimes v}-\overline{v}\otimes\overline{v}\, .
$$
The latter quantity is called Reynolds stress and arises because the averaging does not commute
with the nonlinearity $v\otimes v$. On this formal level the precise definition 
of averaging plays no role, be it long-time averages, ensemble-averages or local space-time averages. The latter 
can be interpreted as taking weak limits and indeed,. Indeed weak limits of Leray solutions of the Navier-Stokes equations with vanishing viscosity 
have been proposed in the literature as a deterministic approach to turbulence (see \cite{Lax}, \cite{ChorinBook},
\cite{Bardos}, \cite{BardTiti}). 

A slightly more general version of this type of averaging follows the framework introduced by Tartar \cite{Tartar79, Tartar82}
and DiPerna \cite{Diperna85} in the context of conservation laws. We start by separating the linear 
equations from the nonlinear constitutive relations. Accordingly, we write \eqref{e:Reynolds} as
\begin{equation*}
\begin{array}{l}
\partial_t \overline{v}+\div \overline{u}+\nabla \overline{q} =0\\
\div \overline{v} =0,
\end{array}
\end{equation*}
where $\overline{u}$ is the traceless part of 
$\overline{v}\otimes \overline{v}+R$. 
Since one can write
$$
R=\overline{(v-\overline{v})\otimes(v-\overline{v})},
$$ 
it is clear that $R\geq 0$, i.e. $R$ is a symmetric positive semidefinite matrix. 
In terms of the coarse-grained variables $(\overline{v},\overline{u})$ this inequality can be written as
\begin{equation*}
\overline{v}\otimes\overline{v}-\overline{u}\leq \tfrac{2}{n}\overline{e}\,I,
\end{equation*}
where $I$ is the $n\times n$ identity matrix and
$$
\overline{e}=\overline{\tfrac{1}{2}|v|^2}
$$ 
is the macroscopic kinetic energy density. Motivated by these calculations, we define subsolutions as follows.
Since they will appear often, we introduce the notation $\S^{n\times n}_0$ for the vector space
of symmetric traceless $n\times n$ matrices.

\begin{definition}[Subsolutions]\label{d:subsolution}
Let $\overline{e}\in L^1_{loc}(\R^n\times (0,T))$ with $\overline{e}\geq 0$. A subsolution to the incompressible Euler equations with given kinetic energy density $\overline{e}$
is a triple 
$$
(v,u,q):\R^n\times (0,T)\to \R^n\times\S^{n\times n}_0\times\R
$$
with the following properties:
\begin{itemize}
\item $v\in L^2_{loc},\quad u\in L^1_{loc},\quad q$ is a distribution;
\item
\begin{equation}\label{e:LR}
\left\{\begin{array}{l}
\partial_t v+\div u+\nabla q =0\\
\div v =0,
\end{array}\right. \qquad \mbox{in the sense of distributions;}
\end{equation}
\item
\begin{equation}\label{e:CR}
v\otimes v-u\leq \tfrac{2}{n}\overline{e}\,I\quad\textrm{ a.e. .}
\end{equation}
\end{itemize}
\end{definition}

Observe that subsolutions automatically satisfy $\tfrac{1}{2}|v|^2\leq \overline{e}$ a.e. (the inequality follows
from taking the trace in \eqref{e:CR}). If in addition we have the
equality sign $\tfrac{1}{2}|v|^2=\overline{e}$ a.e., then the $v$ component of the subsolution is in fact a weak solution of the Euler equations. 
As mentioned above, in passing to weak limits (or when considering any other averaging process), the high-frequency oscillations in the velocity are responsible for the appearance of a non-trivial Reynolds stress. Equivalently stated, this phenomenon is responsible for the inequality sign in \eqref{e:CR}. 

\medskip

The key point in our approach to prove Theorem \ref{t:1} is that, starting from a subsolution, an appropriate
iteration process reintroduces the high-frequency oscillations. In the limit of this process one obtains weak solutions.
However, since the oscillations are reintroced in a very non-unique way, in fact this generates {\em several}
solutions from the same subsolution. A brief outline of the relevant iteration scheme will be given in Section \ref{s:applications}. 
In the next theorem we give a precise formulation of the previous discussion.

\begin{theorem}[Subsolution criterion]\label{t:criterion}
Let $\overline{e}\in C(\R^n\times (0,T))$ and $(\overline{v},\overline{u},\overline{q})$ a smooth, strict subsolution, i.e.
\begin{equation}\label{e:ass1}
(\overline{v},\overline{u},\overline{q})\in C^{\infty}(\R^n\times (0,T))\textrm{ satisfies \eqref{e:LR}}
\end{equation}
and
\begin{equation}\label{e:ass2}
\overline{v}\otimes \overline{v}-\overline{u}<\tfrac{2}{n}\overline{e}\qquad \mbox{on $\R^n\times (0,T)$.}
\end{equation}
Then there exist
infinitely many weak solutions $v\in L^{\infty}_{loc}(\R^n\times(0,T))$
of the Euler equations such that
\begin{eqnarray*}
\tfrac{1}{2}|v|^2&=&\overline{e},\\
p&=&\overline{q}-\tfrac{2}{n}\overline{e}
\end{eqnarray*}
for a.e.~$(x,t)$.
Infinitely many among these belong to $C ((0,T), L^2)$.
If in addition
\begin{equation}\label{e:initialdatum}
\overline{v}(\cdot,t)\rightharpoonup v_0(\cdot)\textrm{ in }L^2_{loc}(\R^n)\textrm{ as }t\to 0 ,
\end{equation}
then all the $v$'s so constructed solve \eqref{e:Cauchy}.
\end{theorem}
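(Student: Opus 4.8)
The plan is to realize Theorem~\ref{t:criterion} through a convex integration scheme, adding high-frequency perturbations to the strict subsolution $(\overline v,\overline u,\overline q)$ so as to raise $\tfrac12|v|^2$ up to $\overline e$ while staying inside the linear constraint \eqref{e:LR}. First I would set up the right functional-analytic framework: consider the linear system \eqref{e:LR} for the state variable $z=(v,u)$ on $\R^n\times(0,T)$, and identify the pointwise nonlinear constraint set. Because $u$ is traceless and symmetric, the relation $\tfrac12|v|^2=\overline e$ together with $v\otimes v - u \le \tfrac2n\overline e\, I$ singles out, for each prescribed energy density $\overline e(x,t)$, a bounded set $K_{\overline e}$ in the space of $(v,u)$ pairs. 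The subsolution condition \eqref{e:ass2} says exactly that $(\overline v,\overline u)$ lies in the interior of the convex hull of $K_{\overline e}$ (the "$\Lambda$-convex hull" adapted to the wave cone of \eqref{e:LR}), so one is in the classical convex-integration situation: a strict subsolution is to be deformed into a solution taking values in $K_{\overline e}$.

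Next I would run the iteration. Starting from $z_0=(\overline v,\overline u)$, I would construct a sequence $z_q=(v_q,u_q)$ of smooth, compactly perturbed subsolutions with $\tfrac12|v_q|^2 \nearrow \overline e$, where at each step one adds a perturbation supported in frequency at scale $\lambda_q\to\infty$, built from a finite family of "plane-wave" building blocks. The key algebraic input is a geometric lemma: any element of the interior of the convex hull can be written as a convex combination of finitely many elements of $K_{\overline e}$ whose pairwise differences lie in the wave cone of \eqref{e:LR}, so that each difference can be realized as a localized, divergence-free oscillatory correction to $(v,u)$ that exactly solves the linear system. One chooses these corrections to increase the kinetic energy by a definite fraction of the gap $\overline e - \tfrac12|v_q|^2$, while their $L^2$ norm is controlled by that same gap; taking $\lambda_q$ large enough at each step makes the error in the constraint, and the $C^0$ displacement of $v_q$, summable, so $v_q\to v$ strongly in $L^2_{loc}$ and $z_q\to z$ with $u_q\to u$ and the limit satisfies $\tfrac12|v|^2=\overline e$ a.e.; the pressure is recovered from the trace, giving $p=\overline q-\tfrac2n\overline e$. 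The \emph{non-uniqueness} comes for free: at a single step there are infinitely many admissible choices of building blocks and of $\lambda_q$, and one can build a Cantor-set family of sequences yielding pairwise distinct limits; a Baire-category formulation on a suitable complete metric space of subsolutions is the cleanest way to phrase "infinitely many".

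For the continuity-in-time and initial-data statements, I would carry the estimates in a space-time norm that controls $t\mapsto v_q(\cdot,t)$ in, say, $C^0_t L^2_x$ (or weak-$L^2$), so that the $L^2_{loc}$ perturbations can be arranged to be small uniformly in $t$; summability of the corrections then gives $v\in C((0,T),L^2_{loc})$ for the relevant subfamily. Under \eqref{e:initialdatum}, one additionally keeps the support of each perturbation away from $\{t=0\}$ shrinking at a controlled rate, or argues that the $L^2$-smallness of $v_q-\overline v$ near $t=0$ plus $\overline v(\cdot,t)\rightharpoonup v_0$ forces $v(\cdot,t)\rightharpoonup v_0$, so that \eqref{e:wie3} holds and $v$ solves \eqref{e:Cauchy}.

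The main obstacle is the geometric/algebraic core: producing, for the \emph{linear operator} appearing in \eqref{e:LR}, localized oscillatory perturbations that (i) solve the linear system exactly, (ii) raise the energy by a quantitative amount proportional to the gap, and (iii) have $L^2$ size controlled by the gap — all uniformly as $(v,u)$ ranges over a neighborhood of the subsolution and with constants independent of $q$. This is where the precise structure of the constraint set $K_{\overline e}$, the explicit "Beltrami-flow" or Nash-type building blocks, and the commutator/stationary-phase estimates needed to make the errors summable all have to be engineered together; everything else (the iteration bookkeeping, the Baire-category packaging, the passage to the limit) is comparatively routine once this lemma is in hand.
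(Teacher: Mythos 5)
Your overall strategy is the same one the paper relies on: recast the problem in Tartar's framework as the linear system \eqref{e:LR} coupled with a pointwise constraint set $K_{\overline e}$, observe that the strict inequality \eqref{e:ass2} places $(\overline v,\overline u)$ in the interior of the relevant hull, and reintroduce oscillations by localized plane-wave solutions of \eqref{e:LR}, with the ``infinitely many'' packaged either constructively or by Baire category on the weak* closure of the set of strict subsolutions. This is exactly the machinery behind Proposition 2 of \cite{DS2}, which the paper invokes for Theorem \ref{t:criterion} and sketches in Section \ref{s:applications} via the perturbation property (P).

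There is, however, one step in your outline that would fail as stated, and it is the step carrying the convergence: you claim that by taking $\lambda_q$ large the increments become summable in $C^0$ (and small in $L^2$), so that $v_q\to v$ strongly. The frequency $\lambda_q$ has no influence on the \emph{amplitude} of the perturbation; property (P), and your own requirement that each step raise the kinetic energy by a definite fraction of the gap, force the $q$-th increment to have pointwise size comparable to $(\overline e-\tfrac12|v_q|^2)^{1/2}$, and this gap decays only in an integrated sense, never uniformly (compare the toy example in the paper, where $1-u_k^2$ contracts in $L^1$ but not in $C^0$, and the limit is genuinely discontinuous). Large $\lambda_q$ buys only \emph{weak} convergence of the increments to zero; a scheme with summable $C^0$ increments would output a continuous solution with $\tfrac12|v|^2=\overline e$, which is precisely one of the open problems listed at the end of the paper. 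The correct mechanisms are different: in the constructive variant, strong $L^2_{loc}$ convergence of $v_k$ is deduced from the fact that $v_k\otimes v_k-u_k$ is essentially monotone and bounded above by $\tfrac2n\overline e\,I$, so the weak convergence \eqref{e:terza} upgrades to strong convergence; in the Baire variant no strong convergence of an approximating sequence is needed at all --- solutions arise as points of continuity of the Baire-1 functional $z\mapsto\int|z|^2$ on $X$, and it is the \emph{lower} bound $\int|z_j|^2\geq\eps(\dist(z,K))$ in (P) that forces $\tfrac12|v|^2=\overline e$ there. The same defect propagates into your treatment of time-continuity and of the initial datum (you again ask the corrections to be small in $C^0_tL^2_x$): the attainment of $v_0$ under \eqref{e:initialdatum} comes from working in $C([0,T),L^2_w)$, where the high-frequency perturbations vanish weakly at each fixed time, and the strong $L^2$-continuity asserted for infinitely many solutions is a refinement of the construction in \cite{DS2}, not a consequence of norm-smallness of the increments. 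Your geometric lemma, the localization via potentials, and the Baire/Cantor packaging of non-uniqueness are all in line with the paper; it is only this convergence bookkeeping that must be replaced.
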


This theorem is indeed Proposition 2 of \cite{DS2} and from it we derive essentially all the results 
concerning the incompressible Euler equations 
which are surveyed in this paper. However, for a couple of cases we will need the following slightly stronger 
statement, which can be proved with the same methods (see also the Appendix of \cite{Szekelyhidi} for a more general statement):

\begin{proposition}\label{p:criterion_str}
The same conclusions of Theorem \ref{t:criterion} hold if \eqref{e:ass1} and \eqref{e:ass2} are
replaced by the following weaker assumptions
\begin{enumerate}
\item $(\overline{v},\overline{u},\overline{q})$ is a continuous subsolution;
\item for all $(x,t)\in\R^n\times(0,T]$
$$
\frac{1}{2}|\overline{v}|^2<\overline{e}\quad\Rightarrow\quad\overline{v}\otimes \overline{v}-\overline{u}<\tfrac{2}{n}\overline{e}
$$ 
\item The domain $\{(x,t):\,\frac{1}{2}|\overline{v}|^2<\overline{e}\}\subset \R^n\times (0,T)$
has nonempty interior and the boundary of each time-slice has $0$ Lebesgue measure.
\end{enumerate}
\end{proposition}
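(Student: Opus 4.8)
The plan is to reduce Proposition~\ref{p:criterion_str} to Theorem~\ref{t:criterion} by splitting the space-time domain according to where the kinetic energy inequality is strict. Keeping, as in Theorem~\ref{t:criterion}, the energy density $\overline e$ continuous, set
\[
U:=\big\{(x,t)\in\R^n\times(0,T):\ \tfrac12|\overline v|^2<\overline e\big\},
\]
which is open because $\overline v$ and $\overline e$ are continuous and which has nonempty interior by hypothesis~(3). On $U$ hypothesis~(2) yields the strict matrix inequality $\overline v\otimes\overline v-\overline u<\tfrac2n\overline e\,I$, so there $(\overline v,\overline u,\overline q)$ is a strict subsolution. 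On the closed complement $K:=(\R^n\times(0,T))\setminus U$ one has $\tfrac12|\overline v|^2=\overline e$ (recall that every subsolution satisfies $\tfrac12|\overline v|^2\le\overline e$, by taking the trace in \eqref{e:CR}); hence on $K$ the positive semidefinite matrix $\tfrac2n\overline e\,I-(\overline v\otimes\overline v-\overline u)$ has zero trace, so it vanishes, i.e.\ $\overline u=\overline v\otimes\overline v-\tfrac2n\overline e\,I$ a.e.\ on $K$. Feeding this into \eqref{e:LR} shows that on $\mathrm{int}\,K$ the field $v:=\overline v$ already solves the Euler equations with $\tfrac12|v|^2=\overline e$ and $p=\overline q-\tfrac2n\overline e$; the same computation applied to boundary points shows that the ``Reynolds defect'' $\tfrac2n\overline e\,I-(\overline v\otimes\overline v-\overline u)$, strictly positive on $U$, tends continuously to $0$ at $\partial U$.

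On $U$ I would re-run the convex integration / Baire category scheme underlying Theorem~\ref{t:criterion}, now on the open set $U$ in place of all of $\R^n\times(0,T)$. Two modifications are required. First, the subsolution is only assumed continuous, but this is harmless: one mollifies at the beginning of the iteration, and a mollification of a continuous strict subsolution is a smooth strict subsolution on every compactly contained subdomain, so after exhausting $U$ by such subdomains one is essentially in the situation of Theorem~\ref{t:criterion}. Second, one arranges the iteration so that at each step the added high-frequency perturbation is supported inside $U$ with amplitude controlled by the square root of the size of the Reynolds defect; since this defect vanishes at $\partial U$, the perturbations, and hence the difference between the constructed fields and $\overline v$, go to zero as one approaches $\partial U$. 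The scheme then produces infinitely many weak solutions $v$ of the Euler equations on $U$ with $\tfrac12|v|^2=\overline e$ a.e.\ on $U$ and $v-\overline v\to0$ near $\partial U$ (say in $L^2_{loc}$), infinitely many of them in $C((0,T);L^2_{loc})$.

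It remains to glue and check the conclusions. Define $v$ to be $\overline v$ on $K$ and the convex integration solution on $U$; by the previous step $v\in L^\infty_{loc}(\R^n\times(0,T))$, it matches $\overline v$ across $\partial U$ (in $L^2_{loc}$), and $\tfrac12|v|^2=\overline e$, $p=\overline q-\tfrac2n\overline e$ hold a.e.\ on the whole strip. That $v$ solves the Euler equations in the sense of distributions on $\R^n\times(0,T)$ follows because it does so on the open set $U$ and on $\mathrm{int}\,K$ with the \emph{same} pressure $\overline q-\tfrac2n\overline e$; the only possible further contribution when testing \eqref{e:wie1} and \eqref{e:wie2} is a distribution concentrated on $\partial U$, and this is excluded because $v$ and $v\otimes v$ lie in $L^\infty_{loc}$ and match across $\partial U$, while $\partial U$ has zero Lebesgue measure (by hypothesis~(3) and Fubini, its slices being null in $\R^n$). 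The same null-slice property gives the weak continuity $v\in C((0,T);L^2_{loc})$ and, when \eqref{e:initialdatum} holds, the attainment of the initial datum $v_0$; and the solution set is infinite because it already is on $U$, whose interior is nonempty by~(3).

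The main obstacle is the convex integration step on $U$: the linear-algebraic plane-wave perturbation that drives the iteration needs genuine room inside the constraint \eqref{e:CR}, i.e.\ a definite positive gap $\tfrac2n\overline e\,I-(\overline v\otimes\overline v-\overline u)$, and this gap degenerates precisely at $\partial U$. Carefully matching the decay of the perturbation amplitudes to the decay of this gap, while preserving convergence of the iteration and keeping the perturbations supported away from $\partial U$, is the delicate point; hypothesis~(3) is what ultimately guarantees that the glued field is an honest distributional solution and carries the prescribed energy profile and initial condition.
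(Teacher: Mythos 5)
Your overall strategy -- perturb only where the strict inequality holds and observe that on the complement the subsolution is already a solution (your trace argument there is correct) -- is the same as the paper's, which reduces everything to the proof of Proposition 2 of \cite{DS2} and merely restricts the perturbations to cubes of the grid contained in $U$. But the way you organize the construction introduces a step that does not work as stated: you build weak solutions of Euler \emph{on the open set} $U$, keep $\overline{v}$ on the complement, and then glue, claiming that no distributional contribution can concentrate on $\partial U$ ``because $v$ and $v\otimes v$ lie in $L^\infty_{loc}$ and match across $\partial U$, while $\partial U$ has zero Lebesgue measure.'' This is not a valid argument: an $L^\infty$ field solving a conservation law on both sides of a Lebesgue-null interface need not solve it across the interface (this is exactly the Rankine--Hugoniot phenomenon). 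What you would need is matching of the normal traces of the full flux $(v,\,v\otimes v+pI)$ along $\partial U$, and this is neither produced by the Baire-category scheme (which gives no boundary behaviour of $v-\overline{v}$ near $\partial U$ beyond what is encoded in the space of subsolutions) nor implied by $L^2_{loc}$ convergence of $v$ to $\overline{v}$ alone, especially since $\partial U$ need not be rectifiable or of finite surface measure. Also, your Fubini remark is imprecise: hypothesis (3) controls $\partial(U_t)$ for each $t$, while the slices of $\partial U$ may be strictly larger, so it does not directly give $|\partial U|=0$; in any case measure zero of $\partial U$ is not the issue.

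The gap disappears once you set the argument up as in \cite{DS2}: do not construct on $U$ and glue. Work instead in the global space of subsolutions obtained by adding to $(\overline{v},\overline{u},\overline{q})$ perturbations supported in closed cubes of the grid contained in $U$. Every element of this space satisfies the linear relations \eqref{e:LR} distributionally on all of $\R^n\times(0,T)$, and this is a linear condition stable under the weak limits taken in the Baire argument; hence any limit point is automatically a global solution of \eqref{e:LR}. One then only has to verify the pointwise algebraic constraint $u=v\otimes v-\tfrac{2}{n}\overline{e}\,I$, $\tfrac12|v|^2=\overline{e}$ a.e.: on $U$ this is what the perturbation property and the residuality argument give, and on the complement it holds a.e. by your trace computation, since \eqref{e:CR} persists. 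No boundary concentration question ever arises, and the continuity (rather than smoothness) of the subsolution is all the DS2 argument uses, which is the content of the paper's proof.
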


Condition (2) amounts to the requirement that in the open subset of $ \R^n\times (0,T)$ where $(\overline{v},\overline{u},\overline{q})$ is not a solution, it should actually be a strict subsolution. 

\begin{proof}[Sketch proof of Proposition \ref{p:criterion_str}]
The argument is exactly the same as that of Proposition 2 of \cite{DS2}. A close inspection of that argument
shows that it uses only the continuity of the subsolution. It remains therefore to show that
the open set $\Omega\times (0,T)$ in Proposition 2 of \cite{DS2} can be substituted with the more general 
$U:=\{\bar{v}\otimes\bar{v} - \bar{u} < \frac{2}{n}\bar{e}\}$. In the proof of \cite{DS2} the assumption 
$U=\Omega\times (0,T)$ plays a role 
only in the construction of the grid of Subsection 4.5 of \cite{DS2}. In order to handle the case considered here, it
suffices to use the same cubical partition of $\R^n \times \R$ and to perform the perturbations only on those
closed cubes which are contained in $U$.
\end{proof}

%%%%%%%%%%%%%%%%%%%%%%%%%%%%%%%
\subsection{Global existence of weak solutions}
%%%%%%%%%%%%%%%%%%%%%%%%%%%%%%%

One way to utilize Theorem \ref{t:criterion} is to start with an initial datum $v_0\in L^2_{loc}(\R^n)$, and
construct a smooth triple $(\overline{v},\overline{u},\overline{q})\in C^{\infty}(\R^n\times (0,T))$ 
solving \eqref{e:LR} with initial datum $v_0$ in the sense of \eqref{e:initialdatum}. We further define 
$$
e(v,u):=\tfrac{n}{2}\lambda_{max}(v\otimes v-u),
$$
where $\lambda_{\max}$ denotes the largest eigenvalue. It is then obvious that $(\overline{v},\overline{u},\overline{q})$  is a smooth, strict subsolution for 
$$
\overline{e}(x,t):=e\bigl(\overline{v}(x,t),\overline{u}(x,t)\bigr)+\min (t,\tfrac{1}{t})
$$
Of course there are many ways of constructing such a subsolution, since the system \eqref{e:LR} is underdetermined (there is no 
evolution equation for $\overline{u}$!). This observation is closely related to the well known closure problem in turbulence.

By constructing a subsolution with bounded energy, E.~Wiedemann in \cite{Wiedemann} recently obtained the following:
\begin{corollary}[Global existence for weak solutions]
Let $v_0\in L^2(\T^n)$ be a solenoidal vectorfield. Then there exist infinitely many 
global weak solutions \eqref{e:Cauchy} with bounded energy, i.e. such that
$$
E(t)=\frac{1}{2} \int_{\T^n}|v(x,t)|^2\,dx
$$
is bounded. Moreover $E(t)\to 0$ as $t\to \infty$. 
\end{corollary}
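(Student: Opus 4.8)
The plan is to apply Theorem \ref{t:criterion} with a cleverly chosen smooth strict subsolution whose kinetic energy bound is globally finite and decays to zero. The obvious subsolution construction of the previous subsection produces an $\overline{e}$ built from $e(\overline{v},\overline{u})+\min(t,1/t)$, but on the torus there is no a priori reason that $e(\overline{v},\overline{u})$ is integrable in $x$, let alone decaying in $t$. So the first step is to build $(\overline{v},\overline{u},\overline{q})$ by hand so that both its energy density and the gap in \eqref{e:ass2} are controlled. A convenient choice is to mollify the data: solve, say, the heat equation (or the Stokes equations) with initial datum $v_0$ to obtain $\overline{v}(\cdot,t)$, which is smooth for $t>0$, solenoidal, satisfies $\overline{v}(\cdot,t)\rightharpoonup v_0$ in $L^2(\T^n)$ as $t\to0$, and whose $L^2(\T^n)$ norm is nonincreasing; in fact, since $v_0$ has zero mean (it is solenoidal on the torus), $\|\overline{v}(\cdot,t)\|_{L^2}\to0$ as $t\to\infty$ exponentially. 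One then chooses $\overline{u}$ and $\overline{q}$ to absorb the residual: writing the heat/Stokes evolution as $\partial_t\overline{v}=\nu\Delta\overline{v}$, one sets $\overline{u}$ to be the traceless part of a symmetric matrix field $U$ with $\div U=\nu\Delta\overline{v}=\partial_t\overline{v}$ (e.g. $U=\nu\nabla\overline{v}+\nu\nabla\overline{v}^T$ up to a gradient, adjusting $\overline{q}$ accordingly so that \eqref{e:LR} holds), and notes that $\|\overline{u}(\cdot,t)\|_{L^\infty}$ and $\|\overline{v}(\cdot,t)\|_{L^\infty}$ both decay as $t\to\infty$ and are finite for each $t>0$.

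The second step is to produce the energy density $\overline{e}$. Set
\begin{equation*}
\overline{e}(x,t):=e\bigl(\overline{v}(x,t),\overline{u}(x,t)\bigr)+\delta(t),
\end{equation*}
where $\delta(t)>0$ is a smooth function chosen small enough that $\int_{\T^n}\overline{e}(x,t)\,dx$ is finite for every $t$ and tends to $0$ as $t\to\infty$ — this is possible because $e(\overline{v},\overline{u})\le \tfrac{n}{2}(|\overline{v}|^2+|\overline{u}|)$ is already integrable with norm decaying in $t$, so one only needs $\delta(t)\to0$ (and $\delta$ summable enough), while near $t=0$ one needs $\delta(t)$ merely positive and, say, $\delta(t)\to0$ slowly enough that no integrability issue arises; a choice like $\delta(t)=\min(t,e^{-t})$ works. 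By construction $(\overline{v},\overline{u},\overline{q})$ is a smooth strict subsolution with energy density $\overline{e}$ and satisfies \eqref{e:initialdatum}.

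The third step is to invoke Theorem \ref{t:criterion}: it yields infinitely many weak solutions $v$ of \eqref{e:Cauchy}, infinitely many of them in $C((0,T),L^2)$, with $\tfrac12|v(x,t)|^2=\overline{e}(x,t)$ a.e. Integrating in $x$ gives $E(t)=\int_{\T^n}\overline{e}(x,t)\,dx$, which by the construction in step two is finite for all $t$ and satisfies $E(t)\to0$ as $t\to\infty$; distinctness of the solutions is part of the conclusion of Theorem \ref{t:criterion}. Since $\overline{e}$ is independent of $T$ one lets $T\to\infty$ (or works directly on $(0,\infty)$, which the statement of Theorem \ref{t:criterion} permits) to obtain global solutions.

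The main obstacle is the second step: one must choose the smooth subsolution so that its \emph{own} kinetic energy density $e(\overline{v},\overline{u})$ — not just $|\overline{v}|^2$ — is integrable on $\T^n$ and decays as $t\to\infty$, and simultaneously keep the strict-subsolution gap \eqref{e:ass2} open down to $t=0$ where the data is merely $L^2$. Using a parabolic regularization of $v_0$ handles both: it gives instantaneous smoothing, energy monotonicity, and (because of zero mean on the torus and the spectral gap of $-\Delta$) exponential decay, so that the only freedom left, the choice of the cushion $\delta(t)$, can be made to preserve finiteness and decay of the total energy. One should double-check that the matrix field realizing $\div\overline{u}=\partial_t\overline{v}$ can be taken smooth, traceless, and with the required decay — this is a routine div-inverse construction (e.g. via the Bogovskii-type operator, or more simply because $\partial_t\overline{v}=\nu\Delta\overline{v}$ is already a divergence) — but it is the technical heart of making the corollary follow cleanly from Theorem \ref{t:criterion}.
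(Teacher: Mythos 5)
Your high-level strategy — build a smooth strict subsolution attaining $v_0$ in the sense of \eqref{e:initialdatum} and feed it to Theorem \ref{t:criterion} — is exactly the route the paper takes (it attributes precisely ``the construction of a subsolution with bounded energy'' to Wiedemann). The gap is in the step you yourself call the technical heart: with the heat/Stokes regularization the subsolution does \emph{not} have bounded energy as $t\to 0$ for general $v_0\in L^2$. Your $\overline u$ is (up to adjusting $\overline q$) $\nu(\nabla\overline v+\nabla\overline v^{T})$, and since $\overline u$ is traceless it has a negative eigenvalue of modulus at least $|\overline u|/(n-1)$ (operator norm), so $e(\overline v,\overline u)=\tfrac n2\lambda_{\max}(\overline v\otimes\overline v-\overline u)\geq \tfrac{n}{2(n-1)}|\overline u|$ pointwise. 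Hence $\int_{\T^n}\overline e(\cdot,t)\,dx\geq c_n\nu\|\nabla e^{\nu t\Delta}v_0\|_{L^1(\T^n)}$, and if the right-hand side stayed bounded along some $t_j\to 0$ then $\nabla v_0$ would be a finite measure; so for every $v_0\in L^2\setminus BV$ one has $\int_{\T^n}\overline e(\cdot,t)\,dx\to\infty$ as $t\to 0$. Since the solutions produced by Theorem \ref{t:criterion} satisfy $\tfrac12|v|^2=\overline e$ a.e., their energy $E(t)=\int_{\T^n}\overline e(\cdot,t)\,dx$ inherits this blow-up: you get finite energy for each fixed $t>0$, but not \emph{bounded} energy, which is the whole content of the corollary (no choice of the cushion $\delta(t)$ can help, since it only adds to $\overline e$). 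The defect is structural: $\partial_t\overline v=\nu\Delta\overline v$ costs two derivatives of a merely $L^2$ datum, while any solution operator for the divergence equation gains back only one.

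The missing idea (Wiedemann's actual construction) is to regularize in time at a rate that costs only \emph{one} spatial derivative: damp the $k$-th Fourier mode of $v_0$ on a time scale comparable to $1/|k|$, e.g. $\overline v(\cdot,t)=e^{-t(-\Delta)^{1/2}}v_0$. Then $\partial_t\overline v=-(-\Delta)^{1/2}\overline v$, and the order $-1$ Fourier solution operator for $\div\overline u=-\partial_t\overline v-\nabla \overline q$ yields a smooth symmetric traceless $\overline u$ with $\|\overline u(\cdot,t)\|_{L^2(\T^n)}\leq C\|v_0\|_{L^2}$ \emph{uniformly} in $t>0$, so that $\int_{\T^n}e(\overline v,\overline u)\,dx\leq C\bigl(\|v_0\|_{L^2}^2+\|v_0\|_{L^2}\bigr)$ for all $t$, while still tending to $0$ as $t\to\infty$ when $v_0$ has zero mean; with this subsolution the remainder of your argument (the cushion $\delta(t)$, the application of Theorem \ref{t:criterion} on $(0,\infty)$) goes through. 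One further small correction: solenoidal on $\T^n$ does not imply zero mean (constants are solenoidal), and weak solutions conserve the mean, so the decay $E(t)\to 0$ — both in your argument and in the statement itself — really requires $\int_{\T^n}v_0\,dx=0$; for nonzero mean one can only reduce to this case by a Galilean boost, after which $E(t)$ tends to the kinetic energy of the mean rather than to $0$.
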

It should be noted, however, that for these solutions the energy $E(t)$ does not 
converge to $\frac{1}{2}\|v_0\|_{L^2}^2$ as $t\to 0$: the energy will, in general, have
an instantaneous jump. We will return to this issue in the next section.

%%%%%%%%%%%%%%%%%%%%%%%%%%%%%%%
\subsection{The coarse-grained flow and measure-valued solutions}
%%%%%%%%%%%%%%%%%%%%%%%%%%%%%%%

Following the idea that a subsolution $(\overline{v},\overline{u},\overline{q})$ represents the averaged (or coarse-grained)
velocity, stress tensor and pressure respectively, it is natural to expect that, for any given subsolution, there is a sequence of
weak solutions $v_k$ with $v_k\rightharpoonup \overline{v}$ in $L^2_{loc}$ as $k\to\infty$. Indeed, this is a consequence of the proof 
of Theorem \ref{t:criterion}: one obtains a set of weak solutions which is dense in the space of subsolutions in the weak $L^2$ topology. For details see Section \ref{s:applications}.
This can be made even more precise with the concept of Young measures. 

Let us recall that, given a sequence $v_k\in L^2_{loc} (\R^n\times [0,T))$, there exists a subsequence (not relabeled) and a parametrized probability measure $\nu_{x,t}$ on $\R^n\times [0,T)$, called the associated Young measure, such that
\begin{equation}\label{e:YM}
f(v_k)\overset{*}{\rightharpoonup} \int_{\R^n}f\,d\nu_{x,t}\quad\textrm{ in }L^{\infty}_{loc}
\qquad \mbox{for all bounded continuous $f$.}
\end{equation}
Thus, Young measures record the one-point statistics of oscillations
in weakly convergent sequences. 

In order to capture both high-frequency oscillations as well as possible concentrations for Euler flows, DiPerna and Majda 
introduced the concept of measure-valued solutions.
%for the incompressible Euler equations. 
With this generalization, one can make
sense of the limit \eqref{e:YM} also for test functions $f$ which are not necessarily bounded, in particular for the stress tensor $f_{ij}(v)=v^iv^j$. 
Here we follow Alibert and Bouchitt\'e \cite{AlibertBouchitte}. For such test-functions the limit \eqref{e:YM} takes the form
\begin{equation}\label{e:DM}
f(v_k)\,dxdt\overset{*}{\rightharpoonup} \biggl(\int_{\R^n}f\,d\nu_{x,t}\biggr)\,dxdt+\biggl(\int_{S^{n-1}}f^{\infty}\,d\nu^{\infty}_{x,t}\biggr)\,\lambda(dxdt)
\end{equation}
where the convergence is in the sense of measures and $f^{\infty}(\xi):=\lim_{s\to\infty}\tfrac{f(s\xi)}{s^2}$ is the $L^2$ recession function of $f$. The triple $(\nu,\lambda, \nu^\infty)$ is made of
\begin{itemize}
\item the oscillation measure $\nu_{x,t}$ which  
is a parametrized probability measure on $\R^n$;
\item the concentration measure $\lambda$, which is a nonnegative Radon measure on $\R^n\times(0,T)$;
\item the concertration-angle measure $\nu^{\infty}_{x,t}$ which is a parametrized probability measure on $S^{n-1}$.
\end{itemize}
Note that for bounded $f$ the formula in \eqref{e:DM} reduces to that in \eqref{e:YM}, because $f^\infty = 0$.

\begin{definition}[Measure-valued solutions]
A measure-valued solution of the incompressible Euler equations is a triple $(\nu_{x,t},\lambda,\nu_{x,t}^{\infty})$ such that the following two identities hold for all 
$\varphi\in C_c^{\infty}((0,T)\times\R^n;\R^n)$ with $\div\varphi=0$ and for all
$\psi\in C_c^{\infty}((0,T)\times\R^n)$:
\begin{equation}\label{e:mvie1}
\begin{split}
\int_0^T\int_{\R^n}\partial_t\varphi\cdot \langle \xi,\nu_{x,t}\rangle&+\nabla\varphi:\langle \xi\otimes \xi,\nu_{x,t}\rangle\,dxdt\\
&+\int_0^T\int_{\R^n}\nabla\varphi:\langle \theta\otimes\theta,\nu^{\infty}_{x,t}\rangle\,d\lambda=0\, ,
\end{split}
\end{equation}
\begin{equation}\label{e:mvie2}
\int_0^T\int_{\R^n}\langle \xi,\nu_{x,t}\rangle\cdot \nabla\psi\,dxdt=0.
\end{equation}
For $v_0\in L^2(\R^n)$, the triple $(\nu_{x,t},\lambda,\nu_{x,t}^{\infty})$ is a measure-valued solution of 
\eqref{e:Cauchy} if  \eqref{e:mvie1} can be replaced by  
\begin{equation}\label{e:mvie3}
\begin{split}
\int_0^T\int_{\R^n}&\partial_t\varphi\cdot \langle \xi,\nu_{x,t}\rangle+\nabla\varphi:\langle \xi\otimes \xi,\nu_{x,t}\rangle\,dxdt\\
&+\int_0^T\int_{\R^n}\nabla\varphi:\langle \theta\otimes\theta,\nu^{\infty}_{x,t}\rangle\,d\lambda+\int_{\R^n} \varphi(x,0)\cdot v_0(x)\,dx=0
\end{split}
\end{equation}
for all $\varphi\in C_c^{\infty}([0,T)\times\R^n;\R^n)$ with $\div\varphi=0$.
\end{definition}
In these formulas
\begin{equation}\label{e:mvv}
\overline{v}:=\langle \xi,\nu_{x,t}\rangle=\int_{\R^n}\xi\,d\nu_{x,t}(\xi)
\end{equation}
is the average (coarse-grained) velocity field, and similarly 
\begin{equation}\label{e:mvu}
\begin{split}
\overline{u}+\tfrac{2}{n}\overline{e}I&:=\langle \xi\otimes\xi,\nu_{x,t}\rangle+\langle\theta\otimes\theta,\nu_{x,t}^{\infty}\rangle\,\lambda\\
&=\int_{\R^n}\xi\otimes\xi\,\nu_{x,t}(\xi)+\int_{S^{n-1}}\theta\otimes\theta\,\nu_{x,t}^{\infty}(\theta)\,\lambda(dxdt).
\end{split}
\end{equation}
is the average stress tensor (recall from subsection \ref{ss:reynolds} that by $\overline{u}$ we denote the traceless part of the stress tensor). 
Note that in general the latter is a measure rather than a locally integrable function, because of possible $L^2$ concentrations.
In \cite{DM2} DiPerna and Majda showed that any sequence of Leray solutions of the Navier-Stokes equations converges in the vanishing viscosity limit to a measure-valued solution. 

Measure-valued solutions 
give rise to subsolutions 
as in Definition \ref{d:subsolution}. Conversely, given a subsolution $(\overline{v},\overline{u},\overline{q})$, it is trivial to extend it to
a measure-valued solution by finding for a.e.~$(x,t)$ probability measures $\nu_{x,t}$ such that \eqref{e:mvv} and \eqref{e:mvu} are satisfied.
In this regard it is important to note that in the definition of measure-valued solutions there are no {\it microscopic} constraints, that is, contraints on the
distributions of the probability measures. This is very different from other contexts where Young measures have been used, such as conservation laws in one space dimension \cite{Tartar82, Diperna85}: in these and similar references the Young measures satisfy
additional microscopic constraints in the form of commutativity relations (for instance as a consequence of the div-curl lemma
applied to the generating sequence).

Measure-valued solutions are a very weak notion, with a huge scope for unnatural non-uniqueness. However,
the stronger notion of weak solution in Definition \ref{d:weaksolution} 
actually exhibits this very same non-uniqueness, as witnessed by the following theorem, proved in \cite{SzWie}.

\begin{theorem}[Weak solutions as 1-point statistics]\label{t:mv}
Given a measure-valued solution $(\nu_{x,t},\lambda,\nu_{x,t}^{\infty})$ of the incompressible Euler equations, there exists a sequence of weak solutions $v_k$ with bounded energy such that \eqref{e:DM} holds.
\end{theorem}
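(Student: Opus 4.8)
The plan is to reduce Theorem \ref{t:mv} to Theorem \ref{t:criterion} (or rather its slight strengthening, Proposition \ref{p:criterion_str}) by producing, from the given measure-valued solution $(\nu_{x,t},\lambda,\nu^\infty_{x,t})$, a subsolution whose weak-limit data matches the statistics recorded by the measure-valued solution. First I would set $\overline v := \langle\xi,\nu_{x,t}\rangle$ as in \eqref{e:mvv} and let $\overline u + \tfrac2n\overline e I$ be the full stress measure in \eqref{e:mvu}, so that $(\overline v,\overline u,\overline q)$ satisfies the linear system \eqref{e:LR} in the sense of distributions with $\overline q$ the natural pressure; the definition of measure-valued solution is exactly the assertion that these averaged quantities solve \eqref{e:LR}. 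By Jensen's inequality applied to the convex function $M\mapsto\lambda_{\max}(M)$ one gets $\overline v\otimes\overline v \le \langle\xi\otimes\xi,\nu_{x,t}\rangle + \langle\theta\otimes\theta,\nu^\infty_{x,t}\rangle\lambda = \overline u + \tfrac2n\overline e I$, i.e. the constitutive inequality \eqref{e:CR} holds, so $(\overline v,\overline u,\overline q)$ is genuinely a subsolution with energy density $\overline e$.

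The next, and main, difficulty is that this raw subsolution is neither smooth nor strict: the concentration measure $\lambda$ may be singular, $\overline u$ is only a measure, and the strictness \eqref{e:ass2} certainly fails wherever the measure-valued solution is itself a classical-looking flow (i.e. where $\nu_{x,t}$ is a Dirac mass). The plan here is a two-step regularization. I would first mollify in space-time at scale $\e$: convolution preserves \eqref{e:LR} by linearity, turns $\overline u$ into a smooth tensor, and by convexity of $\lambda_{\max}$ the mollified pair still satisfies a (possibly non-strict) version of \eqref{e:CR}, now with a smooth energy density $\overline e_\e \to$ the relevant limit. Then I would add a small strictly positive correction to the energy density — replace $\overline e_\e$ by $\overline e_\e + \delta_\e$ for a suitable smooth positive $\delta_\e$, or more carefully $e(\overline v_\e,\overline u_\e) + \delta_\e$ as in the construction after Theorem \ref{t:criterion} — to obtain a genuinely strict smooth subsolution. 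At this point Theorem \ref{t:criterion} yields weak solutions $v_{\e,k}$ with $\tfrac12|v_{\e,k}|^2 = \overline e_\e + \delta_\e$, and in particular, since the construction produces solutions weakly dense in the space of subsolutions, one can choose $k=k(\e)$ so that $v_{\e,k(\e)}$ is $\e$-close to $\overline v_\e$ in a suitable negative Sobolev or weak-$*$ metric.

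The final step is a diagonal argument: set $v_\e := v_{\e,k(\e)}$ and show $v_\e \rightharpoonup \overline v$ with $v_\e\otimes v_\e$, suitably interpreted as a measure together with its concentration part, converging to exactly the triple $(\nu,\lambda,\nu^\infty)$, so that \eqref{e:DM} holds. The velocities $v_\e$ converge weakly to $\overline v = \langle\xi,\nu_{x,t}\rangle$ by the closeness choice above; the identity $|v_\e|^2 = 2\overline e_\e + 2\delta_\e \to 2\overline e = \langle|\xi|^2,\nu_{x,t}\rangle + \langle1,\nu^\infty_{x,t}\rangle\lambda$ pins down the total mass of the limiting measure $|v_\e|^2\,dxdt$, and a polarization/localization argument combined with the weak convergence of $v_\e$ forces the off-diagonal and angular structure to agree with $\langle\xi\otimes\xi,\nu_{x,t}\rangle + \langle\theta\otimes\theta,\nu^\infty_{x,t}\rangle\lambda$; one checks there is no ``hidden'' extra concentration beyond $\lambda$ because the $v_\e$ have locally equi-bounded $L^2$ norms (the energies are uniformly bounded on compact sets, using $\overline e\in L^1_{loc}$ and controlling $\lambda$). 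I expect the genuinely delicate point to be precisely this last matching of the concentration measure: extracting a generalized Young measure from $\{v_\e\}$ gives \emph{some} triple, and one must argue it coincides with the prescribed one rather than merely dominating or being dominated by it — this is where the weak density of solutions in the subsolution space, the exact energy equality $\tfrac12|v_\e|^2=\overline e_\e+\delta_\e$, and a careful bookkeeping of which part of $|v_\e|^2\,dxdt$ is absolutely continuous versus singular all have to be used together.
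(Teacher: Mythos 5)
Your reduction to Theorem \ref{t:criterion} (or Proposition \ref{p:criterion_str}) cannot prove the statement, because \eqref{e:DM} must hold for \emph{every} admissible test function $f$: the sequence has to generate the prescribed generalized Young measure, whereas your construction only controls the first moment (via weak closeness of $v_\e$ to $\overline v_\e$) and the second moment (via $\tfrac12|v_\e|^2=\overline e_\e+\delta_\e$). Worse, the solutions produced by Theorem \ref{t:criterion} have pointwise prescribed modulus, and since the mollification of the singular part of the stress tends to $0$ almost everywhere, along your diagonal sequence $|v_\e|^2$ converges a.e.\ to twice the absolutely continuous part $e_{ac}$ of the energy; consequently any oscillation measure generated by $\{v_\e\}$ is supported, for a.e.\ $(x,t)$, on the single sphere $\{|\xi|^2=2e_{ac}(x,t)\}$. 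This is incompatible with general data: take the homogeneous measure-valued solution $\nu_{x,t}\equiv\tfrac12\delta_a+\tfrac12\delta_b$ with $|a|\neq|b|$ and $\lambda=0$ (constants trivially satisfy \eqref{e:LR}); no sequence coming out of your scheme can generate it. Even for measures supported on one sphere, first and second moments do not determine a probability measure, so the ``polarization/localization'' step could at best match $\langle\xi\otimes\xi,\nu_{x,t}\rangle+\langle\theta\otimes\theta,\nu^\infty_{x,t}\rangle\lambda$, never the triple $(\nu,\lambda,\nu^\infty)$ itself. In short, the delicate point is not the bookkeeping of the concentration part you single out, but the fact that Theorem \ref{t:criterion} gives no handle whatsoever on the one-point distribution beyond its modulus. (Your first step --- that the barycenter and averaged stress of a measure-valued solution form a subsolution --- is correct and is noted in the text, though the inequality follows simply from $\langle(\xi-\overline v)\otimes(\xi-\overline v),\nu_{x,t}\rangle\geq0$ together with positivity of the concentration term, rather than Jensen for $\lambda_{\max}$.)

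For comparison, the proof (given in \cite{SzWie}, to which the survey refers) runs differently and exploits precisely the remark made before the theorem, namely that measure-valued solutions carry no microscopic constraints. One first approximates $(\nu,\lambda,\nu^\infty)$, in the metrizable weak* topology of generalized Young measures, by measure-valued solutions which are piecewise homogeneous with atomic oscillation part; an atomic measure is then realized by exact weak solutions which spend the correct space-time volume fraction near each atom, which requires running the subsolution/convex-integration machinery with data adapted to the individual atoms (piecewise constructions at the level of the measure, plus solutions of large amplitude on small sets to produce the concentration part $\lambda$ with the prescribed angular distribution $\nu^\infty$), and finally one concludes by a diagonal argument. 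So the fix is not a sharper version of your last matching step: the whole one-point statistics must be built into the construction itself, which a single application of Theorem \ref{t:criterion} with a prescribed energy density cannot achieve.
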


%%%%%%%%%%%%%%%%%%%%%%%%%%%%%%%
\section{Energy}
%%%%%%%%%%%%%%%%%%%%%%%%%%%%%%%
In the previous section we have seen that weak solutions of the Euler 
equations are
in general highly non-unique, at least if we interpret weak solutions in the usual distributional sense of Definition \ref{d:weaksolution}. In particular the kinetic energy density $\tfrac{1}{2}|v|^2$ can be prescribed as an independent quantity. It is therefore quite remarkable that, despite this high flexibility, the additional requirement
that the energy $E(t)=\tfrac{1}{2}\int_{\R^n}|v|^2\,dx$ be non-increasing already suffices to single out the unique classical solution when it exists. 

\begin{theorem}[Weak-strong uniqueness]\label{t:ws}
Let $v\in L^\infty ([0,T), L^2 (\R^n))$ be a weak solution of \eqref{e:Cauchy} with the additional property
that $\nabla v + \nabla v^T\in L^\infty$. Assume that $(\nu, \lambda, \nu^\infty)$ is a measure-valued 
solution of \eqref{e:Cauchy} satisfying 
\begin{equation}\label{e:energy1}
\int_{\R^n} \int_{\R^n} |\xi|^2 d\nu_{x,t} (\xi)\, dx + \int_{\R^n} d\lambda_t (x) \;\leq\; \int_{\R^n} |v_0|^2 (x)\, dx
\qquad \mbox{for a.e. $t$.}
\end{equation}
Then $(\nu, \lambda, \nu^\infty)$ coincides with $v$ as long as the latter exists, i.e.
$$
\nu_{x,t} = \delta_{v (x,t)} \;\;\mbox{for a.a. $(x,t)\in \R^n\times (0,T)$
 and }\;\; \lambda\equiv 0 \;\;\mbox{on $\R^n\times (0,T)$.}
$$
\end{theorem}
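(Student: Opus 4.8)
The plan is to run a relative energy (modulated energy) argument of Dafermos--DiPerna type, adapted to measure-valued solutions. For a.e.\ $t\in(0,T)$ set
$$
\mathcal{E}(t):=\frac12\int_{\R^n}\int_{\R^n}|\xi-v(x,t)|^2\,d\nu_{x,t}(\xi)\,dx+\frac12\int_{\R^n}d\lambda_t(x);
$$
by \eqref{e:energy1} this is finite for a.e.\ $t$ (the $L^2$ recession function of $\xi\mapsto\tfrac12|\xi-v|^2$ equals $\tfrac12|\theta|^2=\tfrac12$ on $S^{n-1}$, so the full mass $\tfrac12\lambda_t(\R^n)$ enters), it is nonnegative, and it vanishes exactly when $\nu_{x,t}=\delta_{v(x,t)}$ a.e.\ and $\lambda_t=0$. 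Hence it suffices to show $\mathcal{E}\equiv0$. Expanding the square and writing $\overline v=\langle\xi,\nu_{x,t}\rangle$,
$$
\mathcal{E}(t)=\Big(\tfrac12\!\int_{\R^n}\langle|\xi|^2,\nu_{x,t}\rangle\,dx+\tfrac12\lambda_t(\R^n)\Big)-\int_{\R^n}\overline v\cdot v\,dx+\tfrac12\int_{\R^n}|v|^2\,dx,
$$
and the first bracket is $\le\tfrac12\|v_0\|_{L^2}^2$ by \eqref{e:energy1}.

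The next step is to compute the cross term $\int\overline v(\cdot,t)\cdot v(\cdot,t)\,dx$ by using $v$ itself as a test function in the measure-valued formulation \eqref{e:mvie3}. Taking $\varphi(x,s)=\chi_R(x)\,\eta_t(s)\,v(x,s)$ with $\chi_R$ a spatial cutoff and $\eta_t$ approximating $\mathbf 1_{[0,t)}$ (so $\eta_t(0)=1$), and letting first $\eta_t$ and then $R$ pass to the limit, the cutoff errors carry a factor $\nabla\chi_R=O(1/R)$ multiplied by quantities that are integrable at infinity because both $v$ and $(\nu,\lambda,\nu^\infty)$ have finite energy, and hence disappear. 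What remains is
$$
\int_{\R^n}\overline v(x,t)\cdot v(x,t)\,dx=\|v_0\|_{L^2}^2+\int_0^t\!\!\int_{\R^n}\big[\partial_s v\cdot\overline v+\nabla v:\langle\xi\otimes\xi,\nu\rangle\big]\,dx\,ds+\int_0^t\!\!\int_{\R^n}\nabla v:\langle\theta\otimes\theta,\nu^\infty\rangle\,d\lambda\,ds.
$$
Substituting $\partial_s v=-\div(v\otimes v)-\nabla p$, using $\div v=\div\overline v=0$ (the latter from \eqref{e:mvie2}) to kill the pressure, and the pointwise identity that rewrites $(v\cdot\nabla v)\cdot\overline v-\nabla v:\langle\xi\otimes\xi,\nu\rangle$ as $-\nabla v:\langle(\xi-v)\otimes(\xi-v),\nu\rangle+\tfrac12\div\!\big((v-\overline v)|v|^2\big)$ — whose divergence term integrates to $0$ — and finally the energy identity $\tfrac12\|v(t)\|_{L^2}^2=\tfrac12\|v_0\|_{L^2}^2$ for $v$, one arrives at
$$
\mathcal{E}(t)\le-\int_0^t\!\!\int_{\R^n}\nabla v:\langle(\xi-v)\otimes(\xi-v),\nu_{x,s}\rangle\,dx\,ds-\int_0^t\!\!\int_{\R^n}\nabla v:\langle\theta\otimes\theta,\nu^\infty_{x,s}\rangle\,d\lambda_s\,ds.
$$

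To close, note that both matrices $\langle(\xi-v)\otimes(\xi-v),\nu\rangle$ and $\langle\theta\otimes\theta,\nu^\infty\rangle$ are symmetric and positive semidefinite, so in the two integrands only the symmetric part of $\nabla v$ survives, and each integrand is bounded by $\tfrac12\|\nabla v+\nabla v^T\|_{L^\infty}$ times its trace; since $\tr\langle(\xi-v)\otimes(\xi-v),\nu\rangle=\langle|\xi-v|^2,\nu\rangle$ and $\tr\langle\theta\otimes\theta,\nu^\infty\rangle=1$, this gives $\mathcal{E}(t)\le 2\|\nabla v+\nabla v^T\|_{L^\infty}\int_0^t\mathcal{E}(s)\,ds$. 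As $\mathcal{E}\ge0$, Gronwall forces $\mathcal{E}\equiv0$, i.e.\ $\nu_{x,t}=\delta_{v(x,t)}$ for a.e.\ $(x,t)$ and $\lambda\equiv0$.

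The main obstacle is not the algebra but its rigorous justification under the weak hypothesis $\nabla v+\nabla v^T\in L^\infty$ (rather than, say, $v\in C^1_c$): one must mollify $v$ in space--time, keep track of the spatial cutoff $\chi_R$, and show that all commutator and cutoff errors vanish — here the crucial structural point is that the antisymmetric part of $\nabla v$ cancels throughout (via the $\div((v-\overline v)|v|^2)$ term and similar identities), so that only the controlled quantity $\nabla v+\nabla v^T$ ever appears. Two further technical points: one should make sense of the time slices $\overline v(\cdot,t)$, $v(\cdot,t)$ and $\lambda_t$, e.g.\ by restricting to common Lebesgue points in time and noting that, by \eqref{e:mvie1}, $s\mapsto\int\overline v(x,s)\psi(x)\,dx$ admits a bounded-variation representative; and one should establish the energy identity for $v$ under the stated regularity, which follows from the same mollification scheme together with an Onsager/Constantin--E--Titi type commutator estimate (harmless here given that $\nabla v+\nabla v^T$ is bounded).
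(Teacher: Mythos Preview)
The paper does not give a proof of this theorem; it merely records that the result is proved in \cite{BDS} and remarks that the argument there shows the barycenter $\bar v$ is a dissipative solution in the sense of Lions. Your relative-energy (modulated-energy) computation is exactly the method of \cite{BDS}, and the Lions connection mentioned in the paper is nothing other than the observation that the inequality
\[
\mathcal{E}(t)\;\le\;\|\nabla v+\nabla v^T\|_{L^\infty}\int_0^t\mathcal{E}(s)\,ds
\]
you derive is (a measure-valued version of) Lions' defining inequality for dissipative solutions. So your approach and the one the paper alludes to are the same.

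Your handling of the technical points is also in line with \cite{BDS}. Two remarks worth making explicit. First, the energy identity for $v$ under the sole hypothesis $\nabla v+\nabla v^T\in L^\infty$ does follow from mollification: writing $v_\eps=v*\rho_\eps$, the only nontrivial term is $\int[(v\otimes v)_\eps-v_\eps\otimes v_\eps]:\nabla v_\eps$; since the bracket is symmetric and tends to $0$ in $L^1$ while $\tfrac12(\nabla v_\eps+\nabla v_\eps^T)$ stays bounded in $L^\infty$, the term vanishes in the limit---no full gradient bound is needed. Second, the same symmetry is what makes $v$ a legitimate test function in \eqref{e:mvie3}: every occurrence of $\nabla v$ is contracted against a symmetric tensor ($\langle\xi\otimes\xi,\nu\rangle$ or $\langle\theta\otimes\theta,\nu^\infty\rangle$), so again only the bounded symmetric part survives. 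You identified this structural point correctly; it is the reason the hypothesis can be weakened from $\nabla v\in L^\infty$ to $\nabla v+\nabla v^T\in L^\infty$.
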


This theorem recently appeared in \cite{BDS}, building upon 
ideas of \cite{Brenier:2000,BrenierGrenier}, where the authors dealt with the energy of 
measure-valued solutions to the Vlasov-Poisson system. More precisely, the proof of 
\cite{BDS} yields the following information:
if $\nu_{x,t}$ satisfies \eqref{e:energy1}, then
$$
\bar{v}(x,t) := \int_{\R^n} \xi\, d\nu_{x,t} (\xi) \quad \left( = \langle \xi, \nu_{x,t}\rangle \right)
$$
is a {\em dissipative solution} of \eqref{e:Cauchy} in the sense of Lions (see \cite{Lions}).
In fact, Lions introduced the latter notion to gain back the weak-strong uniqueness
while retaining the weak compactness
properties of the DiPerna-Majda solutions. Theorem \ref{t:ws} shows that this can
be achieved in the framework of
DiPerna and Majda by simply adding the natural energy constraint \eqref{e:energy1}.

\subsection{Admissible weak solutions} It is easy to see that $C^1$ solutions of
the incompressible Euler equations satisfy the following identity, which expresses
the conservation of the kinetic energy in a local form
\begin{equation}\label{e:energy2}
\partial_t \frac{|v|^2}{2} + {\rm div}\, \left(\left(\frac{|v|^2}{2} + p\right) v\right
) = 0\, .
\end{equation}
Integrating \eqref{e:energy2} in space we formally get the conservation of the total kinetic
energy
\begin{equation}\label{e:energy3}
\frac{d}{dt} \int_{\R^n} \frac{|v|^2}{2} (x,t)\, dx = 0\, .
\end{equation}
These identities suggest that the notion of weak solution to \eqref{e:Cauchy}
can be  complemented with several admissibility criteria, which we list here:
\begin{itemize}
\item[(a)]
\begin{equation*}%\label{e:energy_a}
\int |v|^2 (x,t)\, dx \leq \int |v_0|^2 (x)\, dx \qquad \mbox{for a.e. $t$}.
\end{equation*}
\item[(b)]
\begin{equation*}%\label{e:energy_b}
\int |v|^2 (x,t)\, dx \leq \int |v|^2 (x,s)|\, dx \qquad \mbox{for a.e. $t>s$}.
\end{equation*}
\item[(c)] If in addition $v\in L^3_{loc}$, then 
 \begin{equation*}%\label{e:energy_c}
\partial_t \frac{|v|^2}{2} + {\rm div}\, \left(\left(\frac{|v|^2}{2} + p\right) v\right
) \leq 0\, 
\end{equation*}
in the sense of distributions (note that, since $-\Delta p = {\rm div}\, {\rm div}\, v\otimes v$,
the product $|v|^2p$ is well-defined by the Calderon-Zygmund inequality).
\end{itemize}
Condition (c) has been proposed for the first time by Duchon and Robert in
\cite{DuchonRobert} and it resembles the admissibility criteria which are 
popular in the literature on hyperbolic conservation laws.

Next, denote by $L^2_w (\R^n)$ the space $L^2(\R^n)$
endowed with the weak topology. We recall
that any weak solution of \eqref{e:Cauchy} can be modified on a set of measure
zero so to get $v\in C ([0,T), L^2_w (\R^n)$ (this is a common feature of evolution
equations in conservation form; see for instance Theorem 4.1.1 of \cite{DafermosBook}). 
Consequently $v$ has a well-defined trace at every
time and the requirements (a) and (b) can therefore be strengthened 
in the following sense:
\begin{itemize}
\item[(a')]
\begin{equation*}%\label{e:energy_a'}
\int |v|^2 (x,t)\, dx \leq \int |v_0|^2 (x)|\, dx \qquad \mbox{for {\em every} $t$}.
\end{equation*}
\item[(b')]
\begin{equation*}%\label{e:energy_b'}
\int |v|^2 (x,t)\, dx \leq \int |v|^2 (x,s)|\, dx \qquad \mbox{for {\em every} $t>s$}.
\end{equation*}
\end{itemize}

However, none of these criteria restore the uniqueness of weak solutions.

\begin{theorem}[Non-uniqueness of admissible weak solutions]\label{t:energy_sucks}
There exist initial data $v_0\in L^\infty\cap L^2$ for which there are 
infinitely many bounded solutions of \eqref{e:Cauchy} which are strongly $L^2$-continuous (i.e.
$v\in C ([0,\infty), L^2 (\R^n))$) and satisfy (a'), (b') and (c). 

The conditions (a'), (b') and (c) hold with the equality sign 
for infinitely many of these solutions, whereas for infinitely many other
they hold as {\em strict} inequalities.  
\end{theorem}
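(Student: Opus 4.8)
The plan is to deduce everything from the subsolution criterion, Theorem \ref{t:criterion} (or its refinement, Proposition \ref{p:criterion_str}). The key observation is that any weak solution $v$ produced from a strict subsolution $(\overline v,\overline u,\overline q)$ satisfies $\tfrac{1}{2}|v|^2=\overline e$ and $p=\overline q-\tfrac{2}{n}\overline e$, hence $\tfrac{1}{2}|v|^2+p=g$ with $g:=(1-\tfrac{2}{n})\overline e+\overline q$, and therefore, since $\div v=0$,
\begin{equation*}
\partial_t\frac{|v|^2}{2}+\div\left(\left(\frac{|v|^2}{2}+p\right)v\right)=\partial_t\overline e+v\cdot\nabla g
\end{equation*}
in the sense of distributions. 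Consequently all three admissibility conditions become conditions on the subsolution alone: (c) is implied by the pointwise inequality $\partial_t\overline e+\sqrt{2\overline e}\,|\nabla g|\le0$ a.e.\ (since $|v|=\sqrt{2\overline e}$), and it holds for all the produced solutions with the equality sign as soon as $\nabla g\equiv0$ and $\partial_t\overline e\equiv0$; while (a') and (b') amount to $t\mapsto E(t)=\int_{\R^n}\overline e(x,t)\,dx$ being non-increasing. Thus it suffices to exhibit, for a single datum $v_0\in L^\infty\cap L^2(\R^n)$ with $\div v_0=0$, two continuous strict subsolutions $(\overline v,\overline u,\overline q)$ with continuous energy densities $\overline e$, both attaining $v_0$ in the sense of \eqref{e:initialdatum} and with $\int_{\R^n}\overline e(x,t)\,dx\to\tfrac{1}{2}\|v_0\|_{L^2}^2$ as $t\to0$ (which keeps (a') and (b') valid and upgrades the weak $L^2$-continuity at $t=0$ of the produced solutions to strong continuity, so that $v\in C([0,\infty),L^2)$) --- one for which (a'), (b'), (c) hold with equality, the other with strict inequality.

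For the construction I would take $v_0$ to be a bounded, compactly supported, divergence-free field containing a flat vortex sheet (a jump across a hyperplane) inside its support, and look for $(\overline v,\overline u,\overline q)$ which, outside a self-similarly expanding ``turbulent zone'' $\{|x_2|<at\}$ contained in $\{v_0\neq0\}$ for short times, coincides with a smoothed, localised version of the stationary shear flow determined by $v_0$ (an exact solution, on which $\partial_t\overline e=0$), while inside the zone $\overline v$ is a mixing profile interpolating the two sides of the sheet with $\tfrac{1}{2}|\overline v|^2<\overline e$ there (so the convex integration has room), $\overline u\in\S^{n\times n}_0$ is chosen so as to close \eqref{e:LR}, and $\overline q$ is chosen so that $g=(1-\tfrac{2}{n})\overline e+\overline q$ is independent of $x$ (reducing (c) to $\partial_t\overline e\le0$), or at least so that $\nabla g$ is small enough to preserve $\partial_t\overline e+\sqrt{2\overline e}\,|\nabla g|\le0$. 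Since the zone has Lebesgue measure $O(t)$, one gets $\overline v(\cdot,t)\to v_0$ strongly in $L^2$ and $\overline e(\cdot,t)\to\tfrac{1}{2}|v_0|^2$ in $L^1$ as $t\to0$.

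The two regimes then come from tuning the energy density $\overline e$ inside the turbulent zone. Taking $\overline e$ time-independent --- a suitable extension of $\tfrac{1}{2}|v_0|^2$ across the zone with $\tfrac{1}{2}|\overline v|^2<\overline e$ there --- gives $\partial_t\overline e\equiv0$, hence $E(t)\equiv\tfrac{1}{2}\|v_0\|_{L^2}^2$, and Theorem \ref{t:criterion} produces infinitely many weak solutions with datum $v_0$ for which (a'), (b'), (c) all hold with equality. Lowering $\overline e$ inside the zone strictly below $\tfrac{1}{2}|v_0|^2$ --- while keeping $\overline e\ge\tfrac{1}{2}|\overline v|^2$ strict there, $\partial_t\overline e\le0$ everywhere, and $\partial_t\overline e<0$ on a set of positive measure --- makes $E$ strictly decreasing yet still tending to $\tfrac{1}{2}\|v_0\|_{L^2}^2$ as $t\to0$, and yields infinitely many further weak solutions with the \emph{same} datum $v_0$ for which (a'), (b'), (c) hold as strict inequalities. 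In both cases the solutions belong to $C((0,\infty),L^2)$ by Theorem \ref{t:criterion} and extend continuously to $t=0$ by the energy convergence just noted.

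The step I expect to be the main obstacle is the construction of the continuous strict subsolution itself: finding profiles for $\overline v$ (divergence-free, matching the shear flow at $|x_2|=\pm at$), for $\overline e$, and for the traceless symmetric stress $\overline u$ that simultaneously solve \eqref{e:LR}, keep $\overline v\otimes\overline v-\overline u<\tfrac{2}{n}\overline e\,I$ strictly inside the zone, make $\nabla g$ manageable, and respect the energy budget and its monotonicity. The naive self-similar slab ansatz turns out to be over-determined --- the first component of $\div\overline u$ is forced by $\partial_t\overline v$, so the off-diagonal part of $\overline u$ must solve an ODE whose solution cannot vanish at both edges $|x_2|=\pm at$ (as continuity with the outer shear flow would require) unless the mixing profile is trivial --- so one has to use a more flexible, non-self-similar ansatz for the turbulent zone (or allow the outer flow to be itself a nontrivial subsolution rather than the bare shear flow); this is the technical core of the argument, carried out in \cite{DS2} (see also \cite{Szekelyhidi}). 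The localisation of the vortex sheet to compact support and the passage from $n=2$ to arbitrary $n$ (e.g.\ by localising a $2$-dimensional construction in the remaining variables) are comparatively routine.
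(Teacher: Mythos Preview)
Your reduction to the subsolution criterion is correct and is indeed the approach of \cite{DS2}: once a strict subsolution with the right $\overline e$ is found, Theorem~\ref{t:criterion} yields infinitely many solutions with $\tfrac12|v|^2=\overline e$ and $p=\overline q-\tfrac2n\overline e$, and your identity
\[
\partial_t\tfrac{|v|^2}{2}+\div\Bigl(\bigl(\tfrac{|v|^2}{2}+p\bigr)v\Bigr)=\partial_t\overline e+v\cdot\nabla g,\qquad g=(1-\tfrac2n)\overline e+\overline q,
\]
correctly translates (a'), (b'), (c) into conditions on $\overline e$ and $g$ alone.

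Where you diverge from \cite{DS2} is in the construction of the subsolution. You propose to \emph{prescribe} $v_0$ as a localised vortex sheet and then build a subsolution with an expanding mixing slab around it. This is essentially the later strategy of \cite{SzVortex} (used for Theorem~\ref{t:shear}), and the over-determination you diagnose for the self-similar slab is a genuine obstruction --- but it is not what \cite{DS2} does, so your deferral ``carried out in \cite{DS2}'' is misplaced.

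The argument in \cite{DS2} sidesteps all of this by \emph{not} prescribing $v_0$. One starts from the trivial subsolution $(\overline v,\overline u,\overline q)=(0,0,0)$ on a bounded space-time cylinder with a smooth compactly supported $\overline e$, and runs the criterion on a symmetric interval $(-T,T)$. Generic elements of the Baire-residual set are solutions $v\in C((-T,T),L^2)$ with $\tfrac12|v|^2=\overline e$; one then \emph{defines} $v_0:=v(\cdot,0)$, so that $\tfrac12|v_0|^2=\overline e(\cdot,0)$ holds automatically and strong $L^2$-continuity at $t=0$ is immediate. That infinitely many distinct solutions share this time-zero trace is a further Baire argument, exploiting that the convex-integration perturbations can be localised to $\{t>0\}$. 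The equality versus strict-inequality dichotomy then comes simply from choosing $t\mapsto\overline e(\cdot,t)$ constant or strictly decreasing: there are no moving interfaces to match and no over-determined ODE to solve. So your framework is right, but the concrete subsolution you sketch is harder than necessary and is not the one used in the cited proof.
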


This theorem is from \cite{DS2}. The second statement generalizes the intricate
construction of Shnirelman in \cite{Shnirelmandecrease}, which produced the first example  
of a weak solution in 3D of \eqref{e:Cauchy} with strict inequalities
in (a) and (b). 

%%%%%%%%%%%%%%%%%%%%%%%%%%%%%%%
\subsection{Wild initial data}\label{s:wild}
%%%%%%%%%%%%%%%%%%%%%%%%%%%%%%%
The initial data $v_0$ as in Theorem \ref{t:energy_sucks} are obviously not
regular, since for regular initial data the local existence theorems and the weak-strong uniqueness (Theorem \ref{t:ws}) ensure
local uniqueness under the very mild condition (a). One might therefore ask how large is the
set of these ``wild'' initial data. A consequence of our methods is the following
density theorem (cp. Theorem 2 in \cite{SzWie}).

\begin{theorem}[Density of wild initial data]
The set of initial data $v_0$ for which the conclusions of Theorem \ref{t:energy_sucks} holds
is dense in the space of $L^2$ solenoidal vectorfields.
\end{theorem}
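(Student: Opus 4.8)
The plan is to deduce the density theorem directly from Proposition~\ref{p:criterion_str}, following the strategy already used to prove Theorem~\ref{t:energy_sucks}. Fix an arbitrary solenoidal $w_0\in L^2(\R^n)$ and an arbitrary $\eps>0$; I want to produce an initial datum $v_0$ with $\|v_0-w_0\|_{L^2}<\eps$ such that \eqref{e:Cauchy} admits infinitely many bounded, strongly $L^2$-continuous solutions satisfying (a'),~(b') and~(c). The first step is a mollification/truncation: replace $w_0$ by a smooth, compactly supported solenoidal field $\tilde w_0$ with $\|\tilde w_0-w_0\|_{L^2}<\eps/2$ (standard, using that solenoidal fields are preserved by convolution and that cutoff can be made divergence-free by subtracting a small gradient correction, or simply by truncating the stream function/vector potential). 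So it suffices to handle $\tilde w_0$, i.e.\ to perturb a \emph{smooth} compactly supported datum by less than $\eps/2$ in $L^2$ into a wild one.

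The core step is to build, near $\tilde w_0$, a strict subsolution to which Proposition~\ref{p:criterion_str} applies. The idea is to run the \emph{smooth} Euler flow starting from $\tilde w_0$ on a short time interval, and then glue it to a genuinely ``turbulent'' strict subsolution of the kind constructed in \cite{DS2} for Theorem~\ref{t:energy_sucks}. Concretely: let $v^s$ be the classical solution of Euler with $v^s(0,\cdot)=\tilde w_0$, which exists and is smooth on $\R^n\times[0,\tau]$ for some $\tau>0$, with associated pressure $p^s$; set $u^s$ equal to the traceless part of $v^s\otimes v^s$ and $q^s=p^s+\frac2n\overline e^s$ with $\overline e^s=\frac12|v^s|^2$, so that $(v^s,u^s,q^s)$ solves \eqref{e:LR}. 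This is a subsolution realising equality in \eqref{e:CR}, hence not strict; but on the other hand the known construction supplies, for a suitable energy profile, a strict subsolution $(\overline v,\overline u,\overline q)$ on $\R^n\times(0,T)$ whose time-zero weak limit is a small perturbation of $\tilde w_0$. One then interpolates in time: on $[0,\delta]$ use $(v^s,u^s,q^s)$, on $[2\delta,T]$ use the strict subsolution, and on $[\delta,2\delta]$ connect them by a convex-type combination, adding a large enough $\overline e$ (for instance $\overline e=e(\overline v,\overline u)+\eta$ with $\eta>0$, as in the paragraph after Theorem~\ref{t:criterion}) so that \eqref{e:ass2} holds wherever $\frac12|\overline v|^2<\overline e$. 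By choosing $\delta$ small the $L^2$ distance of $\overline v(\cdot,0)$ from $\tilde w_0$, which equals the distance for the classical flow at time $0$, is $0$; more honestly, since we are free to take the time-zero datum of the strict subsolution itself close to $\tilde w_0$ in weak $L^2$ and then mollify/compress in space, we arrange $v_0:=\overline v(\cdot,0)$ to satisfy $\|v_0-\tilde w_0\|_{L^2}<\eps/2$, using strong rather than merely weak convergence by controlling the energy $\overline e$ from above near $t=0$.

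Once such a continuous strict subsolution $(\overline v,\overline u,\overline q)$ with \eqref{e:initialdatum} is in hand, Proposition~\ref{p:criterion_str} (the more flexible version, needed because the subsolution is only continuous and is a genuine solution on the short initial interval, where condition~(2) holds vacuously) produces infinitely many weak solutions $v\in L^\infty_{loc}$ of \eqref{e:Cauchy} with $\frac12|v|^2=\overline e$, $p=\overline q-\frac2n\overline e$, and infinitely many of them lying in $C((0,T),L^2)$; by choosing $\overline e$ bounded and taking $\overline e(\cdot,t)\to\frac12|v_0|^2$ suitably (again as in Wiedemann's construction for the energy bound), one gets $v\in C([0,T),L^2)$ and the energy (in)equalities (a'),(b'),(c) — with equality along the piece where $v$ agrees with the classical flow and with the strict/non-strict sign as desired on the rest, exactly as in Theorem~\ref{t:energy_sucks}. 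Each such $v_0$ is therefore a ``wild'' datum within $\eps$ of $w_0$, proving density.

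The main obstacle is the gluing/interpolation step: one must verify that the patched triple is still a continuous subsolution solving \eqref{e:LR} in the sense of distributions across the interfaces $t=\delta$ and $t=2\delta$ (no spurious distributional time-derivatives appear precisely because $\overline v$ is continuous in time and $\overline u,\overline q$ may jump only in a way compatible with $\partial_t\overline v+\div\overline u+\nabla\overline q=0$ — which forces a careful choice of the interpolation, typically done by solving for $\overline u$ given a prescribed smooth $\overline v$), and that the strict inequality \eqref{e:ass2}, or the weaker hypotheses (1)–(3) of Proposition~\ref{p:criterion_str}, survive on the transition layer after adding the slack $\eta$ in $\overline e$. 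Controlling the $L^2$ distance $\|v_0-w_0\|_{L^2}$ \emph{strongly} (not just weakly) is the other delicate point, handled by keeping $\overline e$ close to $\frac12|v_0|^2$ near $t=0$ so that the generating sequence converges in norm there; all of this is essentially contained in \cite{DS2} and \cite{SzWie}, so the proof is a matter of assembling those ingredients rather than devising a new mechanism.
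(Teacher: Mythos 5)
Your scheme has a fatal structural flaw, and it is not the gluing issue you flag at the end. Because the patched triple coincides with the classical solution $(v^s,u^s,q^s)$ on $[0,\delta]$, and because the perturbations in Proposition \ref{p:criterion_str} are supported in the strictness region $U\subset\{t>\delta\}$, every weak solution you produce has initial datum exactly $\tilde w_0$ -- a smooth, compactly supported field (indeed you yourself note the distance at $t=0$ is zero). But a smooth (even Lipschitz) datum can never satisfy the conclusions of Theorem \ref{t:energy_sucks}: by local existence and the weak--strong uniqueness Theorem \ref{t:ws}, any weak solution with datum $\tilde w_0$ obeying (a) coincides with the classical solution while the latter exists -- this is precisely the remark opening Section \ref{s:wild}, that wild data are ``obviously not regular''. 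The step of your argument that actually breaks is the energy bookkeeping you dismiss with ``exactly as in Theorem \ref{t:energy_sucks}''. For (a'), (b') you need the glued subsolution to satisfy $\int \overline{e}(x,t)\,dx\le \tfrac12\int|\tilde w_0|^2\,dx$ for all $t$. But a continuous subsolution with datum $\tilde w_0$ and this energy bound extends (as the paper notes) to a measure-valued solution of \eqref{e:Cauchy} satisfying \eqref{e:energy1}, so Theorem \ref{t:ws} forces $\overline{v}=v^s$ and a vanishing Reynolds defect for all $t$ before the classical blow-up time: the strict inequality you need on $[2\delta,T]$ cannot open up below the conserved energy level, so the object you propose to glue does not exist. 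If instead you enlarge $\overline{e}$ to make room for strictness, then $\tfrac12\int|v(t)|^2=\int\overline{e}(t)>\tfrac12\int|\tilde w_0|^2$ for $t>\delta$ and (a') fails. Either way the conclusions of Theorem \ref{t:energy_sucks} are lost, so your construction cannot prove the density statement.

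The actual argument (Theorem 2 of \cite{SzWie}, following the construction of wild data in \cite{DS2}) is different in exactly this respect: the wild datum is \emph{not} $\tilde w_0$ and cannot be any regular field; it must itself be produced by convex integration. One starts from a subsolution with non-increasing total energy whose energy gap at $t=0$ is small, and runs an additional perturbation/Baire-category step acting on the time-zero slice so as to push it into the constraint set, i.e.\ $\tfrac12|v_0|^2=\overline{e}(\cdot,0)$ a.e.; only then do the solutions generated for $t>0$ attain $v_0$ strongly with non-increasing energy, yielding (a'), (b'), (c). The $L^2$-closeness to $\tilde w_0$ is quantitative rather than exact: since the time-zero perturbation is oscillatory, $\|v_0-\tilde w_0\|_{L^2}^2$ is comparable to $\int\bigl(2\overline{e}(x,0)-|\tilde w_0|^2\bigr)dx$, which is made smaller than $(\varepsilon/2)^2$ by choosing the initial gap small. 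Your mollification step and your idea of controlling $\overline{e}$ from above near $t=0$ are the right ingredients, but they must be used to perturb the datum itself into an irregular, ``pre-oscillated'' state, not to keep it equal to the smooth field via the classical flow.
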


Another surprising corollary is that the usual shear flow is a "wild initial data". More precisely,
consider the following solenoidal vector field in $\R^2$
\begin{equation}\label{e:shear}
v_0 (x) = 
\left\{
\begin{array}{ll}
(1,0) & \mbox{if $x_2>0$}\\
(-1, 0) & \mbox{if $x_2<0$}.
\end{array}\right.
\end{equation}
or the following solenoidal vector field in $\T^2 ={\mathbb S}^1\times {\mathbb S}^1$:
\begin{equation}\label{e:shear2}
v_0 (x) = 
\left\{
\begin{array}{ll}
(1,0) & \mbox{if $\theta_2\in (-\pi, 0)$}\\
(-1, 0) & \mbox{if $\theta_2\in (0, \pi)$}.
\end{array}\right.
\end{equation}

\begin{theorem}[The vortex-sheet is wild]\label{t:shear} 
For $v_0$ as in \eqref{e:shear}.
there are infinitely many weak solutions of \eqref{e:Cauchy} on $\R^2\times [0, \infty)$ 
which satisfy (c). For $v_0$ as in \eqref{e:shear2} there are
infinitely many weak solutions of \eqref{e:Cauchy} on $\T^2\times [0,\infty)$ which 
satisfy (c), (a') and (b').
\end{theorem}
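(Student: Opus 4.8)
The plan is to deduce Theorem \ref{t:shear} from Proposition \ref{p:criterion_str} by exhibiting, for each of the two vortex-sheet initial data, a continuous subsolution $(\overline{v},\overline{u},\overline{q})$ with an appropriate energy density $\overline{e}$ satisfying the three hypotheses of that proposition, and with $\overline{v}(\cdot,t)\rightharpoonup v_0$ as $t\to 0$. The guiding heuristic is that the vortex sheet \eqref{e:shear} should ``fatten'' into a turbulent mixing zone: I look for a subsolution that for $x_2$ far from the axis coincides with the trivial stationary solution $(v_0,v_0\otimes v_0-\tfrac1n|v_0|^2 I,\text{const})$ — where $\tfrac12|\overline v|^2=\overline e$ so that condition (2) is vacuous — and that in a growing strip $|x_2|<h(t)$ with $h(0)=0$ interpolates between the two constant states while keeping the strict inequality \eqref{e:ass2}. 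A natural ansatz, exploiting translation invariance in $x_1$, is to take $\overline v$, $\overline u$ depending only on $(x_2,t)$ (indeed on the self-similar variable $x_2/t$), with $\overline v=(\gamma(x_2/t),0)$ for a smooth odd profile $\gamma$ with $\gamma\equiv\pm1$ outside $[-1,1]$, and to choose the single relevant off-diagonal / diagonal entries of $\overline u$ so that \eqref{e:LR} holds and \eqref{e:CR} is strict inside the strip; one then sets $\overline e$ to be any continuous function that equals $\tfrac12$ outside the strip and is slightly larger than $\tfrac n4\lambda_{\max}(\overline v\otimes\overline v-\overline u)$ inside it, e.g. $\overline e=e(\overline v,\overline u)+\varepsilon$ there.

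Concretely I would proceed in the following steps. First, reduce to a one-dimensional construction: with the $x_1$-independent ansatz the first equation of \eqref{e:LR} becomes $\partial_t\overline v_1+\partial_{x_2}\overline u_{12}+\partial_{x_1}\overline q=0$ and the divergence-free condition is automatic, so choosing $\overline q$ independent of $x_1$ forces $\overline u_{12}=-\int\partial_t\overline v_1\,dx_2$, which with the self-similar profile is an explicit, bounded, continuous function supported in the strip. Second, verify the strict Reynolds inequality: the matrix $\overline v\otimes\overline v-\overline u-\tfrac2n\overline e I$ has a controlled $2\times2$ block, and since $\overline u_{11}$ (the remaining traceless degree of freedom, $\overline u_{22}=-\overline u_{11}$ in $2$D) is still free, I can pick $\overline u_{11}$ to make this block strictly negative definite provided $\overline e$ is taken large enough in the strip — this is where the $+\varepsilon$ (or the $\min(t,1/t)$-type slack as on p.\ of the excerpt) enters. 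Third, check the hypotheses of Proposition \ref{p:criterion_str}: $(\overline v,\overline u,\overline q)$ is continuous by construction; outside the strip $\tfrac12|\overline v|^2=\overline e$ so implication (2) is trivially true there, while inside the strip we have arranged $\overline v\otimes\overline v-\overline u<\tfrac2n\overline e I$ so (2) holds there too; and the set $\{\tfrac12|\overline v|^2<\overline e\}$ is exactly the open strip $\{|x_2|<t\}$ (intersected with $(0,T)$), which has nonempty interior and whose time-slices have boundary of measure zero, giving (3). Fourth, check the initial condition: as $t\to0$ the strip collapses, $\overline v(\cdot,t)\to v_0$ pointwise a.e.\ and boundedly, hence weakly in $L^2_{loc}$, which is \eqref{e:initialdatum}. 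For the torus case \eqref{e:shear2} the same construction works verbatim by periodicity, and the additional claim that one may also enforce (a') and (b') follows because on $\T^2$ the energy $E(t)=\tfrac12\int_{\T^2}|v|^2$ can be read off from $\overline e$; by choosing $\overline e$ so that $\int\overline e\,dx$ is nonincreasing in $t$ (easy, since $\overline e$ differs from the constant $\tfrac12$ only on a set shrinking to measure zero as $t\to0$, so one arranges the strip contribution to be decreasing) one gets the energy inequalities, while (c) holds for all these solutions since it is part of the output of Theorem \ref{t:criterion} / Proposition \ref{p:criterion_str}.

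The main obstacle I anticipate is not any single step but the simultaneous bookkeeping in Step 2: one must choose the free components of $\overline u$ and the energy density $\overline e$ so that (i) \eqref{e:LR} holds exactly, (ii) the strict inequality \eqref{e:CR} holds throughout the strip including near its moving boundary $\{|x_2|=t\}$ where $\overline u_{12}$ and the profile derivative do not vanish, and (iii) $\overline e$ still matches the constant $\tfrac12$ continuously across that boundary so that hypothesis (2) has no gap. The delicate point is the behaviour at the edge of the mixing zone: there $\overline v$ is already at its plateau value $\pm1$ so $\tfrac12|\overline v|^2=\tfrac12$, yet $\partial_t\overline v_1$ and hence $\overline u_{12}$ are nonzero on any neighbourhood of the edge that dips into the strip, so one cannot have the strict inequality degenerate exactly at the plateau; this is handled by letting $\overline e$ exceed $\tfrac12$ by a positive amount on all of $\{|x_2|<t\}$ and only decay to $\tfrac12$ as $|x_2|\uparrow t$, i.e.\ by a careful matched choice of the self-similar profile for $\overline e$ together with $\gamma$. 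Once this profile is pinned down, everything else is a routine verification, and the passage from $\R^2$ to $\T^2$ plus the energy monotonicity for \eqref{e:shear2} is straightforward.
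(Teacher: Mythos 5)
Your overall strategy is the right one and is the same as the paper's (which defers to \cite{SzVortex}): apply Proposition \ref{p:criterion_str} to a continuous, $x_1$-independent, self-similar subsolution which coincides with the stationary vortex sheet outside a linearly expanding mixing zone, with \eqref{e:initialdatum} holding trivially. However, two of your steps have genuine gaps, and they are precisely the delicate points of the theorem. First, the torus case as you set it up is internally inconsistent. On $\T^2$, condition (a') forces $\int\overline{e}\,dx\leq \tfrac12|\T^2|$, while $\overline{e}\geq\tfrac12|\overline{v}|^2=\tfrac12$ outside the zone; hence $\overline{e}$ must equal $\tfrac12$ there and must be $\leq\tfrac12$ on average inside the zone. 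Your resolution of the edge problem --- taking $\overline{e}>\tfrac12$ throughout the open strip and decaying to $\tfrac12$ only at its boundary --- therefore gives $E(t)>E(0)$ for every $t>0$, so (a') and (b') fail and cannot be ``arranged'' afterwards by making $\int\overline{e}$ non-increasing. The correct fix is not a matched decay of $\overline{e}$ but a sub-maximal opening speed of the zone: with the zone $\{|x_2|<ct\}$, the linear profile $\overline{v}_1=x_2/(ct)$, $\overline{u}_{12}=\tfrac{c}{2}\bigl((x_2/(ct))^2-1\bigr)$ and $\overline{u}_{11}=\tfrac12\overline{v}_1^2$, the strict inequality \eqref{e:ass2} holds inside the zone with $\overline{e}\equiv\tfrac12$ if and only if $c<1$, and taking $\overline{e}<\tfrac12$ inside (down to $\tfrac12\bigl(c+(1-c)(x_2/(ct))^2\bigr)$ plus a slack) is exactly what produces energy decrease, hence (a') and (b'); optimizing in $c$ is where the dissipation rate $-1/6$ quoted in the survey comes from. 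Your speed-one strip with a generic smooth plateau profile is the borderline case in which the needed strictness really does push $\overline{e}$ above $\tfrac12$, i.e.\ the wrong regime.

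Second, your claim that (c) ``is part of the output of Theorem \ref{t:criterion} / Proposition \ref{p:criterion_str}'' is not true. The criterion only yields $\tfrac12|v|^2=\overline{e}$ and $p=\overline{q}-\tfrac2n\overline{e}$, so that $\tfrac12|v|^2+p=\overline{q}$, and for such solutions condition (c) is equivalent to $\partial_t\overline{e}+\div(\overline{q}\,v)=\partial_t\overline{e}+v\cdot\nabla\overline{q}\leq 0$ a.e. This is automatic only when $\overline{q}$ is spatially constant and $\overline{e}$ is non-increasing in time, as in the constructions of \cite{DS2}. In your subsolution it is not: from the second equation of \eqref{e:LR} one gets $\overline{q}=\overline{u}_{11}+f(t)$, and on $\T^2$ the saturation of \eqref{e:CR} outside the zone forces $\overline{u}_{11}=\tfrac12$ there, while strictness inside forces $\overline{u}_{11}<\overline{e}\leq\tfrac12$; hence $\nabla\overline{q}\neq0$ across the mixing zone and (c) becomes a genuine additional pointwise constraint on the direction of the wild velocity field, which your argument never addresses. (On $\R^2$, where no global energy bound is claimed, one can sidestep this by taking $\overline{e}\equiv\tfrac12+\delta$ and $\overline{q}$ constant, making the subsolution strict everywhere and (c) hold with equality; but on $\T^2$ condition (a') forbids raising $\overline{e}$, so the local energy inequality requires a dedicated argument tied to the specific subsolution, beyond the bare statement of Proposition \ref{p:criterion_str}.)
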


Theorem \ref{t:shear} is proved in \cite{SzVortex} using Proposition \ref{p:criterion_str} and hence
the proof essentially amounts to showing the existence of a suitable subsolution. The construction of such subsolution
follows an idea introduced in \cite{Szekelyhidi} for the incompressible porous media equation, see Theorem \ref{t:lsz-ipm}
below. 

Since the various additional requirements discussed above do not ensure uniqueness of the solution even for
this very natural initial condition, Theorem \ref{t:shear} raises the following natural question: {\it is there a way to single out 
a unique, physically relevant solution?} In two space dimensions one could further impose that the vorticity is 
a measure, leading to the well-known problem of uniqueness for the vortex sheet (we
note in passing that our methods do not seem to apply to the vorticity formulation of $2$-d Euler:
compare with the discussion in Section \ref{ss:active}). Two other popular criteria
considered in the literature for hyperbolic conservation laws are
\begin{enumerate}
\item the vanishing viscosity limit,
\item the maximally dissipative solution.
\end{enumerate}
For scalar conservation laws they both single out the unique entropy solution, see \cite{DafermosRate,BlaserRiviere}. For the 2-dimensional incompressible Euler system the situation
is surely more complicated.
As it happens with Theorem  \ref{t:energy_sucks}, some of the solutions constructed in \cite{SzVortex}
preserve the energy, whereas some other are dissipative. On the other hand, it is easy to see that the vanishing viscosity limit for initial data as in \eqref{e:shear} or \eqref{e:shear2} is the stationary solution, which is obviously conservative. Therefore, even if the two criteria singled out unique weak solutions, they would be two {\it different} ones.

In \cite{SzVortex} it is also shown that, for the 2-dimensional torus $\T^2$, the maximal dissipation rate reachable with the proof of Theorem \ref{t:shear} is
$$
\max\tfrac{dE}{dt}=-1/6.
$$ 
It is, however, not clear whether there is a solution with this precise dissipation rate and, if it exists, whether it is unique. 

%%%%%%%%%%%%%%%%%%%%%%%%%%%%%%%
%%%%%%%%%%%%%%%%%%%%%%%%%%%%%%%
\section{Applications to other equations}\label{s:applications}

The ideas introduced in the previous sections apply to many other nondissipative
systems of evolutionary partial differential equations. We start with some general considerations, and refer the reader for more details on
this general framework to \cite{KMS02,MuLecturenotes}.

Following Tartar \cite{Tartar82}, we consider general systems in a domain $\D\subset\R^d$ of the form
 \begin{eqnarray}
 \sum_{i=1}^dA_i\partial_iz=0&&\textrm{ in }\D\label{e:LRbis}\\
 z(y)\in K&&\textrm{ a.e. }y\in\D\label{e:CRbis}
\end{eqnarray}
where
\begin{itemize}
\item $z:\D\subset\R^d\to\R^N$ is the unknown state variable,
\item $A_i$ are constant $m\times N$ matrices
\item and $K\subset\R^N$ is a closed set.
\end{itemize} 
{\it Plane waves} are solutions of \eqref{e:LRbis} of the form
\begin{equation}\label{planewave}
z(x)=ah(x\cdot\xi),
\end{equation}
where $h:\R\to\R$. The 
{\it wave cone} $\Lambda$ is given by the states $a\in\R^N$ such that for any choice of the profile $h$
the function \eqref{planewave} solves \eqref{e:LRbis}, that is,
\begin{equation}\label{e:wavecone}
\Lambda:=\left\{a\in\R^N:\,\exists\xi\in\R^d\setminus\{0\}\quad
\mbox{with}\quad \sum_{i=1}^d\xi_i A_ia=0\right\}.
\end{equation}
The oscillatory behavior of solutions to the nonlinear problem is 
then determined by the compatibility of the set $K$ with the 
cone $\Lambda$. This compatibility is expressed in terms of a suitable
concept of $\Lambda$-convex hull $K^{\Lambda}$ (for the precise definition,
see Section \ref{s:hulls}). Modulo technical details,
the subsolutions from
Definition \ref{d:subsolution} are solutions $z$ of the linear relations 
\eqref{e:LRbis} which satisfy the relaxed condition $z\in K^{\Lambda}$. 

The idea of convex integration is to reintroduce oscillations by adding
suitable localized versions of \eqref{planewave} to the subsolutions and to
recover a solution of \eqref{e:LRbis} - \eqref{e:CRbis} iterating this process. The upshot is that in a Baire generic
sense, most solutions of the ``relaxed system'' are actually solutions of the
original system.

There are various different forms of implementing convex integration in this general framework, see for instance \cite{MS99,DacorognaMarcellini97,Sychev,Bernd}.
Common to all approaches is that one is working in a space of subsolutions in which highly oscillatory perturbations
are possible. An elegant way of formalizing this was introduced by B.~Kirchheim in \cite{Bernd} Section 3.3.
We recall the main steps. 

The space of subsolutions arises from a nontrivial open set $\mathcal{U}\subset \R^N$
satisfying the following pertubation property (cp. for instance with Proposition 2.2 in \cite{DS1} and the proof
of Theorem 4.1 in the same paper).

\bigskip

\noindent{\bf Perturbation Property (P): }
There is a continuous function $\eps: \R^+\to \R^+$ with $\eps(0)=0$ with the following property
For every $z\in \mathcal{U}$
there is a sequence of solutions $z_j\in C^\infty_c (B_1)$ of \eqref{e:LRbis} such that
\begin{itemize}
\item $z_j\overset{*}{\rightharpoonup} 0$ in $L^{\infty}(\R^d)$;
\item $z+z_j (y)\in \mathcal{U}\quad \forall y\in\R^d$;
\item $\int |z_j (y)|^2 dy \geq \eps ({\rm dist}\, (z,K))$. 
\end{itemize} 

\bigskip

Next, let 
$$
X_0=\bigl\{z\in C^\infty_c (\mathcal{D}):\,\textrm{\eqref{e:LRbis} holds and }z (y)\in \mathcal{U}\textrm{ for all }y\in \mathcal{D}\bigr\}
$$
and let $X$ be the closure of $X_0$ in $L^{\infty}(\mathcal{D})$ with respect to the weak$^*$ topology. 
Assuming that $K$ is bounded, the set $X$ is bounded
in $L^\infty$ and the weak$^*$ topology is therefore metrizable on $X$. 

An easy covering argument, together with property (P), results in the following lemma:
\begin{lemma}
There is a continuous function $\tilde\eps:\R^+\to\R^+$ with $\tilde\eps(0)=0$ such that, for every $z\in X_0$
there is a sequence $z_j\in X_0$ with
$$
\int_{\mathcal{D}}|z_j-z|^2\,dy\geq \tilde\eps\left(\int_{\mathcal{D}}\dist(z(y),K)\,dy\right)
$$
\end{lemma}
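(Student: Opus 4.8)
The plan is to deduce the lemma from Property (P) by a standard covering and superposition argument. First I would fix $z\in X_0$; since $z\in C^\infty_c(\mathcal{D})$ solves \eqref{e:LRbis} with $z(y)\in\mathcal{U}$ for all $y$, the image $z(\mathcal{D})$ is a compact subset of the open set $\mathcal{U}$. By compactness there is $\rho>0$ such that the $\rho$-neighbourhood of $z(\mathcal{D})$ is still contained in $\mathcal{U}$. For a point $y_0\in\mathcal{D}$, apply Property (P) to the value $z(y_0)\in\mathcal{U}$ to obtain perturbations, then rescale and translate them: if $w_k\in C^\infty_c(B_1)$ is the sequence from (P) for the state $z(y_0)$, set $w_k^{r,y_0}(y):=w_k((y-y_0)/r)$, which still solves \eqref{e:LRbis} (the equation is linear with constant coefficients and homogeneous of degree zero under dilations), is supported in $B_r(y_0)$, and still converges weak$^*$ to $0$. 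For $r$ small enough (depending on the modulus of continuity of $z$) one has $z(y)+w_k^{r,y_0}(y)\in\mathcal{U}$ for all $y$, because $z(y)$ is within $\rho/2$ of $z(y_0)$ on $B_r(y_0)$ and the perturbation $w_k$ stays within a small ball around $z(y_0)+(\text{that ball})\subset\mathcal{U}$ — more precisely one shrinks $r$ so that $|z(y)-z(y_0)|<\rho$ for $y\in B_r(y_0)$ and invokes the containment $z(y_0)+w_k(\cdot)\in\mathcal{U}$ after noting $\mathcal{U}$ being open contains a uniform neighbourhood; a cleaner route is simply to rescale in $y$ only, keeping the value-space perturbation unchanged, and use uniform continuity of $z$.

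Next I would choose a point $y^*\in\mathcal{D}$ where $\mathrm{dist}(z(y^*),K)$ is comparable to its maximum, or rather integrate: pick a ball $B_r(y_0)\subset\mathcal{D}$ on which $\mathrm{dist}(z(y),K)\geq\frac12\mathrm{dist}(z(y_0),K)$, possible by continuity for $r$ small. On that ball the lower bound from (P), after the change of variables $y\mapsto (y-y_0)/r$, gives
\begin{equation*}
\int_{\mathcal{D}}|w_k^{r,y_0}|^2\,dy = r^d\int_{B_1}|w_k|^2\,dy \geq r^d\,\eps\bigl(\mathrm{dist}(z(y_0),K)\bigr).
\end{equation*}
Extending $w_k^{r,y_0}$ by zero outside $B_r(y_0)$ produces an element $z_k:=z+w_k^{r,y_0}\in X_0$ with $\int_{\mathcal{D}}|z_k-z|^2\,dy\geq r^d\eps(\mathrm{dist}(z(y_0),K))$. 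Finally, to pass from the pointwise quantity $\mathrm{dist}(z(y_0),K)$ to the integral $\int_{\mathcal{D}}\mathrm{dist}(z(y),K)\,dy$ appearing in the statement, I would note that $z$ ranges over a bounded set (since $K$ is bounded and $\mathcal{U}$ may be taken bounded), so $\mathrm{dist}(z(\cdot),K)$ is bounded, say by $M$, and $|\mathcal{D}|<\infty$; hence $\int_{\mathcal{D}}\mathrm{dist}(z(y),K)\,dy\leq M|\mathcal{D}|$, and by continuity there is a point $y_0$ with $\mathrm{dist}(z(y_0),K)\geq \frac{1}{|\mathcal{D}|}\int_{\mathcal{D}}\mathrm{dist}(z(y),K)\,dy$. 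Combining, with $r$ chosen depending only on the modulus of continuity of $z$ and on $\mathrm{dist}(z(y_0),K)$ — and here is the one subtlety, that $r$ must be quantified uniformly enough to yield a bona fide function $\tilde\eps$ independent of $z$ — one gets the desired inequality.

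The main obstacle I expect is precisely this uniformity in $z$: Property (P) gives, for each fixed $z(y_0)$, a sequence and a modulus $\eps$, but the admissible scale $r$ and the validity of $z(y)+\text{perturbation}\in\mathcal{U}$ depend on how close $z$ comes to $\partial\mathcal{U}$ and on the modulus of continuity of $z$, neither of which is controlled uniformly over all $z\in X_0$. The resolution, which I believe is what the authors intend by ``an easy covering argument, together with property (P)'', is that one does \emph{not} need uniformity in $z$ for the conclusion as stated: the output $z_k$ is required to lie in $X_0$ (which it does, since $\mathcal{U}$ is open and the perturbation is compactly supported with $z+w_k^{r,y_0}\in\mathcal{U}$ for $k$ large after the value stays in a compact subset of $\mathcal{U}$), and $\tilde\eps$ need only be \emph{some} fixed continuous increasing function with $\tilde\eps(0)=0$ dominated by the right-hand side. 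One can take $\tilde\eps(s):=c\,\eps(s/|\mathcal{D}|)$ for a dimensional constant $c$ absorbing the factor $r^d$ once $r$ is fixed as a (continuous, decreasing) function of the gap $\mathrm{dist}(z(y_0),K)$ alone — using that the equicontinuity needed is only over the single fixed function $z$, which is smooth and compactly supported, hence genuinely uniformly continuous. Thus the covering is really just a single well-chosen ball, and the lemma follows; if one wants the stronger statement with a genuine covering (to get a perturbation spread over all of $\mathcal{D}$), one tiles $\{z\neq 0\}$ or $\{\mathrm{dist}(z,K)>0\}$ by small disjoint balls, applies the above on each, and sums, but the version stated only needs one ball.
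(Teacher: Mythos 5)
Your skeleton is the right one and matches the intended argument: localize the perturbations of (P) by rescaling (they still solve \eqref{e:LRbis}), lower-bound the $L^2$ gain by $r^d\,\eps(\dist(z(y_0),K))$, and relate the pointwise distance to the integral by a mean-value/Chebyshev step. The genuine gap is your replacement of the covering by a \emph{single} ball. The lemma quantifies $\tilde\eps$ before $z$: one fixed function must work for every $z\in X_0$. In your bound the factor $r^d$ is not a function of $\int_{\mathcal D}\dist(z,K)\,dy$ alone: the admissible radius depends on the modulus of continuity of $z$ near $y_0$ (a $z$ with the same value of the integral may oscillate arbitrarily fast, forcing $r$ arbitrarily small) and on the distance of the perturbed values $z(y_0)+w_k(\xi)$ to $\partial\mathcal U$, which property (P) does not control. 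Your two proposed fixes do not close this: the claim that ``$\mathcal U$ being open contains a uniform neighbourhood'' is unjustified as stated (it can be repaired for each fixed $k$, because the image of $z(y_0)+w_k$ is compact in $\mathcal U$, but then $r=r_k$ depends on $k$, and letting $r_k\to 0$ destroys either the uniform lower bound along the sequence or the implicit weak$^*$ convergence $z_j\overset{*}{\rightharpoonup}z$ that the Baire argument needs); and the assertion that ``one does not need uniformity in $z$'', with $r$ ``fixed as a function of the gap alone'', contradicts the statement being proved. As it stands, your construction proves the inequality only with a $z$-dependent constant, which is strictly weaker than the lemma and insufficient for the genericity argument that follows.

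The covering is precisely what removes this dependence, so it is not an optional strengthening. Fix $z$, set $s=\int_{\mathcal D}\dist(z(y),K)\,dy$, and let $A=\{y:\dist(z(y),K)\geq s/(2|\mathcal D|)\}$; since $\dist(\cdot,K)\leq M$ on $\mathcal U\subset K^{co}$, Chebyshev gives $|A|\geq s/(2M)$. At each $y_0\in A$ every sufficiently small radius is admissible for inserting a rescaled (P)-perturbation, so these balls form a fine (Vitali) cover of $A$; choose a finite \emph{disjoint} subfamily $B_{r_i}(y_i)$ covering, say, half of $A$ in measure and add the corresponding perturbations simultaneously. The total gain is at least $\sum_i r_i^d\,\eps(\dist(z(y_i),K))\geq c\,|A|\,\min\{\eps(t):\,s/(2|\mathcal D|)\leq t\leq M\}$ (the minimum also repairs your tacit use of monotonicity of $\eps$, which is not assumed). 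The badly $z$-dependent individual radii now enter only through the total covered measure, which is bounded below universally in terms of $s$; this yields a $\tilde\eps$ depending only on $\eps$, $|\mathcal D|$ and $M$, as required.
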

Since the map $z\mapsto\int_{\mathcal{D}}|z|^2\,dy$ is a Baire-1 function on $X$, an easy 
application of the Baire category theorem gives that the subset of $z\in X$ satisfying 
\eqref{e:CRbis} is Baire-generic in $X$. 

The argument just sketched yields weak solutions to \eqref{e:LRbis}-\eqref{e:CRbis} (assuming that (P) holds for some $\mathcal{U}$) which are zero on the boundary $\partial\mathcal{D}$ in the trace sense with respect to the operator in \eqref{e:LRbis}. In particular these weak solutions are extendable by zero to $\R^d\supset\mathcal{D}$. In applications to evolution equations $\mathcal{D}$ is a space-time domain, say $\mathcal{D}=\R^n\times (0,T)$, and thus this argument yields weak solutions with compact time support, as in \cite{Scheffer93,CFG,Shvydkoy}. For the construction of weak solutions with arbitrary initial data, in particular for the construction of admissible weak solutions, refinements of this argument are necessary. A more detailed exposition for such cases is presented in the Appendix of \cite{Szekelyhidi}.

In the following we survey some examples.

%%%%%%%%%%%%%%%%%%%%%%%%%%%%%%%
\subsection{Compressible Euler} 
%%%%%%%%%%%%%%%%%%%%%%%%%%%%%%%

As a byproduct of our analysis of
the incompressible Euler system,
non--uniqueness theorems for admissible solutions 
of the so-called $p$--system were proved in \cite{DS2}.
The system of isentropic gas dynamics in Eulerian 
coordinates is the oldest
hyperbolic system of conservation laws and consists of $n+1$ equations
in $n$ space dimensions. The unknowns are 
the density $\rho$ and the velocity $v$ of the gas. The equation are
\begin{equation}\label{e:psistema}
\left\{\begin{array}{l}
\partial_t \rho + {\rm div}_x (\rho v) \;=\; 0\\
\partial_t (\rho v) + {\rm div}_x (\rho v\otimes v) + \nabla [ p(\rho)]\;=\; 0\\
\rho (0, \cdot)\;=\; \rho^0\\
v (0, \cdot)\;=\; v^0\, 
\end{array}\right.
\end{equation}
(cf.~(3.3.17) in \cite{DafermosBook} and Section 1.1 of \cite{SerreBook} p7).
The pressure $p$ is a function of $\rho$, which is determined from
the constitutive thermodynamic relations of the gas in question
and satisfies the assumption $p' >0$. A typical example is $p (\rho )= k \rho^\gamma$,
with constants $k>0$ and $\gamma>1$,
which gives the constitutive relation for a polytropic gas
(cf.~(3.3.19) and (3.3.20) of \cite{DafermosBook}).

Weak solutions of \eqref{e:psistema} are bounded functions in $\R^n$, which solve
it in the sense of distributions. Admissible solutions have to satisfy an additional
inequality, coming from the conservation law for the energy of the system.

\begin{definition}\label{d:admissible} A weak solution of
\eqref{e:psistema} is a pair $(\rho, v)\in L^{\infty}(\R^n)$ such that the following identities
hold for every test function $\psi,\varphi\in C^\infty_c (\R^n\times [0,\infty[)$:
\begin{equation}\label{e:test1}
\int_0^{\infty}\int_{\R^n} 
\Bigl[\rho \partial_t \psi + \rho v \cdot \nabla_x \psi\Bigr]\,dx\,dt\;+\;
\int_{\R^n} \rho^0 (x)\, \psi (x,0)\, dx=0,
\end{equation}
\begin{equation}\label{e:test2}
\int_0^{\infty}\int_{\R^n}
\Bigl[\rho v\cdot \partial_t\varphi + \rho \langle v\otimes v, \nabla \varphi\rangle\Bigr]\,dx\,dt
\;+\; \int_{\R^n} \rho^0 (x) v^0 (x)\cdot \varphi (x,0)\, dx\, =0.
\end{equation}

Consider the energy $\eps: \R^{+}\to \R$ given through
the law $p (r)= r^2 \eps' (r)$. A weak solutions of \eqref{e:psistema} is
admissible if the following inequality
holds for every nonnegative $\psi\in C^\infty_c (\R^n\times\R)$:
\begin{eqnarray}
&&\int_0^{\infty}\int_{\R^n} \left[\left(\rho \eps (\rho) + 
\frac{\rho |v|^2}{2}\right)
\partial_t\psi + \left(\rho \eps (\rho) + \frac{\rho |v|^2}{2} +
p (\rho)\right)v\cdot \nabla_x\psi\right]\nonumber\\
&+& \int_{\R^n} \left(\rho^0 \eps (\rho^0) + 
\frac{\rho^0 |v^0|^2}{2}\right)
\psi (\cdot,0)\;\geq\; 0\, .\label{e:entropy2}
\end{eqnarray}
\end{definition}

The following nonuniqueness result was proved in \cite{DS2}.

\begin{theorem}[Non-uniqueness for the p-system]\label{t:psistema}
Let $n\geq 2$. Then, for any given function $p$, 
there exist bounded initial data $(\rho^0, v^0)$ with
$\rho^0\geq c>0$ for which there are infinitely many bounded admissible
solutions $(\rho, v)$ of \eqref{e:psistema} with $\rho\geq c>0$.
\end{theorem}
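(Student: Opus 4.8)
The plan is to reduce the $p$-system to the incompressible framework by making the ansatz that $\rho$ is constant in space and time, say $\rho\equiv c>0$. With this choice the first equation of \eqref{e:psistema} reduces to $c\,\div_x v=0$, i.e. $\div_x v=0$, and the pressure term $\nabla[p(\rho)]$ disappears because $p(c)$ is a constant. The momentum equation becomes $c\,\partial_t v+c\,\div_x(v\otimes v)=0$; absorbing a harmless additive scalar into a ``pressure'' variable one recovers exactly the incompressible Euler system \eqref{e:Cauchy}. Thus any bounded weak solution of incompressible Euler with the corresponding pressure lifts to a bounded weak solution of \eqref{e:psistema} with $\rho\equiv c$ — one only has to check that the distributional identities \eqref{e:test1}--\eqref{e:test2} hold, which is immediate after dividing by $c$ and comparing with Definition \ref{d:weaksolution}.

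The next step is to invoke the non-uniqueness already available for incompressible Euler. By Theorem \ref{t:energy_sucks} there is an initial datum $v^0\in L^\infty\cap L^2$ admitting infinitely many bounded weak solutions $v$ on $\R^n\times[0,\infty)$ which are strongly $L^2$-continuous and satisfy the local energy inequality (c); in particular, infinitely many of them satisfy (c) with \emph{equality}, i.e. they conserve the local kinetic energy. Setting $\rho^0\equiv c$ and keeping this $v^0$, each such $v$ produces via the lift above a distinct bounded admissible solution of \eqref{e:psistema}. Here a small technical point must be dispatched: the paper's incompressible constructions are naturally posed on the torus $\T^n$, so to land in $\R^n$ one takes the periodic solutions and either works on a large periodic cell or, as is standard, transplants the construction onto $\R^n$ directly (the convex-integration scheme of Theorem \ref{t:criterion} is stated on $\R^n$ already, so this is not a real issue — one simply needs a subsolution on $\R^n$, which \cite{DS2} provides).

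The main obstacle is verifying the \emph{admissibility} inequality \eqref{e:entropy2} rather than the plain weak formulation. With $\rho\equiv c$ the internal energy density $\rho\eps(\rho)=c\,\eps(c)$ and $p(\rho)=c^2\eps'(c)$ are constants, so \eqref{e:entropy2} collapses, after subtracting the constant-in-$(\rho)$ terms (which integrate against $\partial_t\psi$ and $\nabla_x\psi$ to zero because $\div_x v=0$ handles the $p(\rho)v$ piece), to precisely
\begin{equation*}
\int_0^{\infty}\int_{\R^n}\left[\frac{c|v|^2}{2}\,\partial_t\psi+\left(\frac{c|v|^2}{2}+p(c)\right)v\cdot\nabla_x\psi\right]+\int_{\R^n}\frac{c|v^0|^2}{2}\,\psi(\cdot,0)\;\geq\;0,
\end{equation*}
and the $p(c)\,v\cdot\nabla_x\psi$ term again integrates to zero by $\div_x v=0$. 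What remains is exactly $c$ times the statement that $\partial_t\tfrac{|v|^2}{2}+\div((\tfrac{|v|^2}{2}+\tilde p)v)\le 0$ in the distributional sense with the correct initial trace — which is condition (c) together with the initial-energy bound built into Theorem \ref{t:energy_sucks}. Since \cite{DS2} produces solutions satisfying (c) (indeed with equality, giving energy-conserving, hence ``borderline admissible'' solutions), the inequality \eqref{e:entropy2} holds. One should double-check that the pressure $\tilde p$ appearing in the incompressible local energy balance matches $p(c)$ up to the normalization in Theorem \ref{t:criterion} (there $p=\overline q-\tfrac{2}{n}\overline e$), but since only $\div(\tilde p\, v)$ enters and $\div v=0$, any additive discrepancy is irrelevant. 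Assembling these pieces yields infinitely many bounded admissible $(\rho,v)$ with $\rho\equiv c>0$, proving the theorem for every prescribed pressure law $p$, in every dimension $n\ge 2$.
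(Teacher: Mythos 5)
There is a genuine gap, and it sits exactly at the step you call ``immediate''. With $\rho\equiv c$ the compressible momentum equation has \emph{no} free pressure: the term $\nabla[p(\rho)]=\nabla[p(c)]$ vanishes identically, so \eqref{e:test2} — which is tested against \emph{arbitrary} $\varphi\in C_c^\infty(\R^n\times[0,\infty))$, not divergence-free ones — forces $\partial_t v+\div(v\otimes v)=0$ distributionally, i.e.\ incompressible Euler with \emph{spatially constant} pressure. A generic weak solution of \eqref{e:Cauchy} satisfies instead $\partial_t v+\div(v\otimes v)=-\nabla p_{inc}$ with a genuinely non-constant $p_{inc}$, and Definition \ref{d:weaksolution} (divergence-free test fields only) gives no control whatsoever on this pressure gradient; so ``dividing by $c$ and comparing with Definition \ref{d:weaksolution}'' does not yield \eqref{e:test2}. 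The same error reappears in your treatment of the entropy inequality: you dismiss the mismatch between $\tilde p$ and $p(c)$ by saying that ``only $\div(\tilde p\,v)$ enters and $\div v=0$'', but $\div(\tilde p\,v)=v\cdot\nabla\tilde p$, and weakly $\int \tilde p\,v\cdot\nabla\psi\,dx$ does not vanish just because $\int v\cdot\nabla\psi\,dx=0$; the discrepancy is harmless only if it is a \emph{constant}, which is precisely what is not guaranteed for the solutions of Theorem \ref{t:energy_sucks} you invoke. So as written, the ``lift'' of an arbitrary incompressible solution with $\rho\equiv c$ to a weak (let alone admissible) solution of \eqref{e:psistema} fails.

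The missing idea — and the paper's actual route, highlighted in the Outlook section — is that Theorem \ref{t:criterion} produces solutions with \emph{both} $|v|^2=2\bar e$ \emph{and} the pressure $p=\bar q-\tfrac{2}{n}\bar e$ prescribed; choosing a subsolution with $\bar e$ and $\bar q$ constant (e.g.\ the compactly supported subsolution behind Theorem \ref{t:1}, extended by a constant-speed background) one obtains infinitely many bounded incompressible solutions whose pressure is constant. For these, $\nabla p_{inc}=0$, so the pressureless momentum equation and hence \eqref{e:test2} hold with $\rho\equiv c$, and your computation collapsing \eqref{e:entropy2} (which is fine in itself: the constants $c\,\eps(c)$ and $p(c)$ drop out via $\int\partial_t\psi\,dt=-\psi(\cdot,0)$ and the weak divergence-free condition) then reduces admissibility to the kinetic-energy inequality, which holds with equality because $|v|^2$ is constant in space-time and matches $|v^0|^2$. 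In short: you must restrict to the constant-energy, constant-pressure solutions furnished by Theorem \ref{t:criterion} rather than quote Theorem \ref{t:energy_sucks} wholesale, and the two places where you wave away the pressure are exactly where this restriction is needed.
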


A variant of this theorem has been recently shown by E.~Chiodaroli using
the same techniques (see \cite{Chiodaroli}). Chiodaroli's Theorem highlights that the main role in this loss
of uniqueness is played by the velocity field.

\begin{theorem}[Non-uniqueness with arbitrary density]\label{t:chiodaroli}
For every periodic $\rho^0\in C^1$ with $\rho^0\geq c>0$ there exists an initial velocity $v^0\in L^\infty$
and a time $T>0$ such that there are infinitely many bounded admissible solutions
$(\rho, v)$ of \eqref{e:psistema} on $\R^n\times [0,T[$, all with density bounded away from $0$.
\end{theorem}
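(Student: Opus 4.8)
The plan is to reduce the statement to an application of Proposition \ref{p:criterion_str}, following the philosophy already used for Theorem \ref{t:psistema} and the vortex-sheet construction of Theorem \ref{t:shear}: one decouples the \emph{linear} part of \eqref{e:psistema} from the \emph{nonlinear} pointwise constraint, builds by hand a subsolution-like object that satisfies the linear part exactly and the constraint strictly on a nonempty open set, and then invokes the convex-integration machinery to produce infinitely many genuine admissible solutions. Concretely, I would first \emph{prescribe} the density to be the time-independent profile $\rho(x,t)=\rho^0(x)$ for all $t$ (this is the key idea highlighted in the statement: the density is \emph{given}, the flexibility lives in the velocity). With $\rho$ frozen, the first equation of \eqref{e:psistema} forces $\div_x(\rho^0 v)=0$, so it is natural to set $m:=\rho^0 v$ and look for $m$ solenoidal in $x$. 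Substituting into the momentum equation, one must solve
\begin{equation*}
\partial_t m + \div_x\!\left(\frac{m\otimes m}{\rho^0}\right) + \nabla_x\bigl[p(\rho^0)\bigr] = 0,\qquad \div_x m = 0.
\end{equation*}
Since $\rho^0\in C^1$ is given, $\nabla_x[p(\rho^0)]$ is a fixed continuous vector field; absorbing it together with the trace of $m\otimes m/\rho^0$ into a pressure, we arrive at a linear system of exactly the form \eqref{e:LR} for the pair $(m, u)$ with $u$ the traceless part of $m\otimes m/\rho^0 + (\text{stuff})$, plus the pointwise constraint $\tfrac{m\otimes m}{\rho^0} - u \le \tfrac{2}{n}\bar e\, I$ for a suitable energy density $\bar e$. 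This is structurally identical to the incompressible setup after the rescaling $v\mapsto m/\rho^0$, so the convex-integration apparatus applies with the weight $\rho^0$ playing a benign role.

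Next I would construct the subsolution. The simplest choice is to take $m\equiv 0$, $u\equiv 0$ on a suitable open slab $\R^n\times(0,T)$ and check strictness: one needs a continuous $\bar e\ge 0$ with, in the notation of \eqref{e:CR}, $\tfrac{m\otimes m}{\rho^0}-u = 0 < \tfrac{2}{n}\bar e\, I$ on an open set with full-measure time-slices, which holds as soon as $\bar e>0$ there. The initial velocity is then read off as $v^0 := m(\cdot,0)/\rho^0 = 0$ (or, if one wants a genuinely nonzero datum or wants to match the pressure term more cleanly, one perturbs: let $m$ be a small smooth compactly-supported-in-$x$ solenoidal field solving the linear evolution, which can be produced by solving a linear transport-type problem or simply by mollifying, and choose $\bar e = e(m, u) + \delta$ with $\delta>0$ so that \eqref{e:ass2} is strict). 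One must also verify the admissibility/energy inequality \eqref{e:entropy2}: because $\rho\equiv\rho^0$ is independent of $t$ the term $\rho\eps(\rho)$ contributes nothing to the time derivative, so \eqref{e:entropy2} reduces to exactly the local energy inequality (c) for the velocity field, which the solutions produced by Proposition \ref{p:criterion_str} satisfy by construction (one arranges $\tfrac12|v|^2 = \bar e$ with $\bar e$ chosen so that (c) holds, as in Theorem \ref{t:energy_sucks}). Then Proposition \ref{p:criterion_str} yields infinitely many bounded $v$'s with $\tfrac12|m/\rho^0|^2 \cdot \rho^0 = \rho^0\bar e$ prescribed, hence infinitely many admissible solutions $(\rho^0, v)$ of \eqref{e:psistema}, all with density $=\rho^0\ge c>0$.

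The main obstacle, and the point that needs genuine care rather than routine bookkeeping, is the presence of the \emph{variable coefficient} $1/\rho^0$ and the fixed source $\nabla_x[p(\rho^0)]$ in the momentum equation: Proposition \ref{p:criterion_str} and the underlying convex-integration scheme are stated for the constant-coefficient linear operator of \eqref{e:LR}, so one has to confirm that after the substitution $v = m/\rho^0$ the perturbation step still works — i.e.\ that localized plane-wave corrections to $m$ can be added so as to keep $\div_x m = 0$ and to stay in the (now $\rho^0$-weighted) constraint set, with the amplitude estimate of Property (P) surviving. Because $\rho^0$ is $C^1$ and bounded above and below, on small cubes it is nearly constant and the standard construction goes through by a perturbative/Campanato-type argument, exactly the mechanism already invoked in \cite{DS2} and \cite{Szekelyhidi}; alternatively one works directly in the $v$ variable and treats the extra lower-order terms as a continuous modification of $\bar e$, which is precisely the generality covered by the Appendix of \cite{Szekelyhidi}. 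Matching the pressure relation and the prescribed energy so that the constitutive law $p(r)=r^2\eps'(r)$ and inequality \eqref{e:entropy2} hold for the \emph{given} $p$ (not one we get to choose, unlike in Theorem \ref{t:psistema}) is the remaining delicate adjustment, handled by absorbing $p(\rho^0)$ into $\bar q$ and choosing $T$ small enough that the resulting $\bar e$ stays positive on the slab.
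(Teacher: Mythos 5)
The paper itself contains no proof of Theorem \ref{t:chiodaroli} (it is attributed to \cite{Chiodaroli}, ``using the same techniques''), and your overall reduction --- freeze $\rho(x,t)=\rho^0(x)$, note that the continuity equation then just says $\div_x m=0$ for $m=\rho^0 v$, absorb $\nabla[p(\rho^0)]$ into the pressure, and run convex integration for $(m,u,q)$ with the constraint $\tfrac{m\otimes m}{\rho^0}-u\le\tfrac2n\bar e\,I$ --- is indeed the intended route. The genuine gap is admissibility. Your ``simplest choice'' $\overline m\equiv0$, $\bar e>0$, $v^0=0$ (and equally the fallback with a smooth $\overline m$ and $\bar e=e(\overline m,\overline u)+\delta$) produces weak solutions whose kinetic energy density equals $\bar e>0$ for a.e.\ $t>0$, while the initial term in \eqref{e:entropy2} only carries $\tfrac{\rho^0|v^0|^2}{2}$, which is strictly smaller; testing \eqref{e:entropy2} with nonnegative $\psi$ concentrated near $t=0$ then yields a contradiction, so \emph{none} of these solutions is admissible. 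Since $\rho$ is time-independent the internal energy is constant in time, and admissibility combined with weak attainment of the datum and lower semicontinuity essentially forces $\tfrac{\rho^0|v^0|^2}{2}=\bar e(\cdot,0)$ a.e.: the initial velocity must itself be a ``wild'' datum whose trace lies \emph{on} the constitutive set, which is precisely why the theorem only asserts ``there exists $v^0$'' and a finite $T$. Producing such a $v^0$, together with a subsolution that sits on the constraint set at $t=0$ and is strictly inside for $t>0$ (the analogue of the extra work behind Theorem \ref{t:energy_sucks} in \cite{DS2}), is the real content of the proof and is missing from your argument; it is also what actually limits the time $T$, not positivity of $\bar e$ as you suggest.

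Two further points. First, your claim that \eqref{e:entropy2} ``reduces exactly'' to condition (c) is incorrect: since $\rho^0$ is non-constant, $\div v\neq0$ (only $\div(\rho^0 v)=0$), so $\div\bigl[(\rho^0\eps(\rho^0)+p(\rho^0))v\bigr]$ does not vanish and the energy flux contains extra terms linear in $m$ weighted by gradients of functions of $\rho^0$. These must be neutralized by a specific coupling of the prescribed kinetic energy $\bar e$ to $\rho^0$ and $p$ (so that the total energy flux becomes a constant multiple of the divergence-free field $m$), which is a genuine choice rather than a routine absorption into $\overline q$. Second, the variable-coefficient difficulty you single out as the main obstacle is largely illusory: keeping $(m,u,q)$ as state variables, the linear relations are exactly the constant-coefficient system \eqref{e:LR}, and only the constraint set depends on $x$ through the continuous, bounded-away-from-zero weight $\rho^0$; this $(x,t)$-dependence is of the same nature as the continuous $\bar e$ already allowed in Theorem \ref{t:criterion} and Proposition \ref{p:criterion_str} and is handled by the localization you describe.
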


%%%%%%%%%%%%%%%%%%%%%%%%%%%%%%%
\subsection{Active scalar equations} \label{ss:active}
%%%%%%%%%%%%%%%%%%%%%%%%%%%%%%%

Active scalar equations are a class
of systems of evolutionary partial differential equations in $n$ space dimensions. The unknowns
are the ``active'' scalar function $\theta$ and the velocity $v$, which, for simplicity, is a divergence-free
vector field. The equations are
\begin{equation}\label{e:ac_pde}
\left\{\begin{array}{l}
\partial_t \theta + v\cdot \nabla_x \theta \;=\; 0 \\
{\rm div}_x v \;=\; 0
\end{array}\right.
\end{equation}
and $v$ and $\theta$ are coupled by an integral operator, namely
\begin{equation}\label{e:ac_int}
v \;=\; T [\theta]\, .
\end{equation}
Several systems of partial differential equations in fluid dynamics fall into this class.

We rewrite \eqref{e:ac_pde} and \eqref{e:ac_int}, in the spirit of Section \ref{ss:reynolds}
(see also \eqref{e:LR} and \eqref{e:CR}), as the system of linear relations 
\begin{equation}\label{e:ac_lr}
\left\{\begin{array}{ll}
\partial_t \theta + {\rm div}_x q \;=\; 0\\
{\rm div}_x v \;=\; 0\\
v = T [\theta]
\end{array}\right.
\end{equation}
coupled with the nonlinear constraint
\begin{equation}\label{e:ac_nc}
q = \theta v\, .
\end{equation}
The initial value problem for the system \eqref{e:ac_lr}-\eqref{e:ac_nc} amounts to
prescribing $\theta (x,0)=\theta_0 (x)$. 

As described at the beginning of this section, a key point is that the linear relations
\eqref{e:ac_lr} admit a large set of plane wave solutions. Note that these linear relations
are not strictly speaking of the form \eqref{e:LRbis} and in order to define a suitable analog
of the plane waves in this setting we assume that the linear operator $T$ is translation invariant.
Let $m(\xi)$ be its corresponding Fourier multiplier. Then we require in addition that
\begin{equation}\label{e:0hom}
m (\xi)\textrm{ is $0$-homogeneous}
\end{equation}
so that \eqref{e:ac_lr} has the same scaling invariance as \eqref{e:LRbis}.
Furthermore the
constraint ${\rm div}_x v = 0$ implies that 
\begin{equation}\label{e:divfree}
\xi\cdot m (\xi) = 0.
\end{equation}  

An important remark at this point is that the $0$-homogeneity of $m$ excludes the vorticity
formulation of the $2$-dimensional incompressible Euler equations: indeed convex integration
does not seem to apply to this situation because the highest order derivatives, namely the
vorticity itself, appear linearly in the equation. In a geometric context this issue has been raised 
by Gromov in Section 2.4.12 of \cite{Gromov} and to a certain extent analysed in \cite{Spring}.
Instead, the $0$-homogenity ensures that $\theta$ and $v$ are of the same order.

In spite of this restriction, several interesting equations fall into this category. Perhaps the
best known examples are the surface quasi geostrophic and the incompressible
porous medium equations, corresponding, respectively to  
\begin{eqnarray}
m (\xi) &=& i |\xi|^{-1} (-\xi_2, \xi_1) \label{e:SQG} 
\quad\mbox{and}\\
m (\xi) &=& |\xi|^{-2} ( \xi_1\xi_2, - \xi_1^2)\label{e:IPM}\, .
\end{eqnarray}
In \cite{CFG} Cordoba, Faraco and Gancedo proved 
\begin{theorem}\label{t:CFG}
Assume $m$ is given by \eqref{e:IPM}. Then there exist infinitely many weak solutions of 
\eqref{e:ac_lr} and \eqref{e:ac_nc} in $L^\infty (\T^2\times [0, +\infty[)$ with $\theta_0=0$.
\end{theorem}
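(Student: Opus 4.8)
The plan is to deduce Theorem \ref{t:CFG} from Proposition \ref{p:criterion_str}, exactly as Theorem \ref{t:shear} is deduced from it. First I would recast the incompressible porous media equation \eqref{e:ac_lr}--\eqref{e:ac_nc} with $m$ given by \eqref{e:IPM} into the Tartar framework of Section \ref{s:applications}: the state variable is $z=(\theta,v,q)$ on the space-time domain $\mathcal{D}=\T^2\times(0,T)$, the linear constraints \eqref{e:LRbis} are $\partial_t\theta+\mathrm{div}_x q=0$, $\mathrm{div}_x v=0$, and the relation $v=T[\theta]$ (legitimate here because $m$ is $0$-homogeneous and satisfies $\xi\cdot m(\xi)=0$, see \eqref{e:0hom}--\eqref{e:divfree}); the nonlinear constraint \eqref{e:CRbis} is the pointwise relation $q=\theta v$, so that $K=\{(\theta,v,q):q=\theta v\}$. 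One then computes the wave cone $\Lambda$ from \eqref{e:wavecone} and the $\Lambda$-convex hull $K^\Lambda$; the key algebraic fact — this is what makes convex integration run — is that $K^\Lambda$ has nonempty interior, i.e. there is genuine room to oscillate. Concretely, I expect $K^\Lambda$ to be described by an inequality of the form $|q-\theta v|^2 \le$ (something) involving the remaining components, analogous to \eqref{e:CR}; ``subsolutions'' are then triples $(\bar\theta,\bar v,\bar q)$ solving the linear relations with $z$ in the interior of $K^\Lambda$.

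The heart of the argument is then the construction of a single nontrivial subsolution with the right boundary behavior: continuous on $\T^2\times[0,T]$, equal to the trivial solution $(\theta,v,q)\equiv 0$ at $t=0$ (so the initial datum $\theta_0=0$ is attained) and, say, at $t=T$ as well so the solution has compact time support, and lying in the \emph{interior} of $K^\Lambda$ — i.e. being a strict subsolution — on the open set where it is not already a solution, as required by condition (2) of Proposition \ref{p:criterion_str}, with condition (3) on the measure of time-slice boundaries also verified. Following the remark after Theorem \ref{t:shear}, the construction mimics the one introduced in \cite{Szekelyhidi} for IPM: one builds the subsolution essentially by hand, making the ``unstable'' region (where $|\nabla\theta|$ would create Rayleigh–Taylor-type mixing) grow from the empty set at $t=0$, choosing $\bar q$ freely within the hull — recall the system is underdetermined, there being no evolution equation for $\bar q$. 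Once such a subsolution is exhibited, Proposition \ref{p:criterion_str} produces infinitely many weak solutions of \eqref{e:ac_lr}--\eqref{e:ac_nc} coinciding with it outside the mixing zone; in particular they all have $\theta_0=0$, are bounded, and are distinct, which is exactly the claim. Since $K$ is bounded on the relevant sublevel (after truncating, $\theta,v,q$ stay bounded), the general machinery of Section \ref{s:applications} — Perturbation Property (P), the covering lemma, and the Baire category argument — applies verbatim in the form already adapted in the Appendix of \cite{Szekelyhidi} for prescribed initial data.

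The main obstacle is precisely this subsolution construction: one must check that the explicit candidate stays strictly inside $K^\Lambda$ everywhere in its support except on the boundary where it degrades to a solution, and that it does so while matching the zero trace at $t=0$. This requires a careful quantitative choice of the profiles defining $\bar\theta$ and $\bar v=T[\bar\theta]$ — note $\bar v$ is determined nonlocally from $\bar\theta$ through the singular integral operator, so one cannot prescribe $\bar\theta$ and $\bar v$ independently, which is what distinguishes IPM from the incompressible Euler case and makes the hull computation and the construction more delicate. A secondary technical point is verifying that Property (P) genuinely holds for the open set $\mathcal{U}=\mathrm{int}\,K^\Lambda$ in this setting, i.e. that the relevant plane waves in the wave cone $\Lambda$ can be localized via \eqref{planewave} and truncated to compactly supported perturbations solving the linear system \eqref{e:ac_lr}; this is where the $0$-homogeneity \eqref{e:0hom} and the divergence-free condition \eqref{e:divfree} are used, and it is the reason the vorticity formulation of $2$-d Euler is excluded. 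Everything else — the Baire category conclusion, the passage from subsolution to infinitely many exact solutions — is then routine given the results already quoted above.
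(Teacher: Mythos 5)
Your overall instinct---recast IPM in the Tartar framework and run convex integration---is the right family of ideas, but the specific route you describe has concrete gaps, and it is in any case not the route behind Theorem \ref{t:CFG}. First, Proposition \ref{p:criterion_str} is a statement about subsolutions of the \emph{incompressible Euler} system (its hypotheses are the Euler constraints \eqref{e:LR}--\eqref{e:CR}); it cannot be invoked for \eqref{e:ac_lr}--\eqref{e:ac_nc}, and proving an IPM analogue is precisely the work you would have to do, not a quotable black box. Second, your constitutive set $K=\{(\theta,v,q):q=\theta v\}$ cannot serve the purpose: it is unbounded (the Baire/weak-star machinery of Section \ref{s:applications} needs $K$ bounded) and, more importantly, it contains the zero state, so property (P) is vacuous there ($\dist(z,K)=0$) and the generic element of $X$ is not forced away from the trivial solution---nontriviality and infinitude would simply not follow. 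The essential missing ingredient is a pointwise non-degeneracy constraint built into $K$, e.g. $|\theta|=1$ together with a bound on $v$, so that membership in $K$ forces the solution to be nontrivial while the zero state lies in the (interior of the) relaxed set; this is exactly the structural point exploited both in \cite{CFG} and in \cite{Szekelyhidi}. Third, the ``heart of the argument'' you identify---a bespoke continuous subsolution with a mixing zone growing from nothing at $t=0$---is the construction needed for the vortex-sheet datum of Theorem \ref{t:lsz-ipm}, where the initial state lies in $K$ itself; for $\theta_0=0$ it is unnecessary, because once the zero state belongs to an open set $\mathcal{U}$ with property (P), the general scheme of Section \ref{s:applications} applied to $\mathcal{D}=\T^2\times(0,T)$ already produces infinitely many solutions with compact support in time, hence with $\theta_0=0$; this is exactly how the paper situates \cite{CFG} alongside \cite{Scheffer93,Shvydkoy}.

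Where the real work lies is also different from where you place it. As the paper explains in Section \ref{s:hulls}, the authors of \cite{CFG} do \emph{not} compute $K^\Lambda$ (your proposed hull computation is the later, sharper route of \cite{Szekelyhidi}); instead they exhibit a nontrivial but possibly much smaller open set $\mathcal{U}$ satisfying (P), and the price is that the perturbations $z_j$ are not single localized plane waves but infinitely nested sequences of laminates. Your ``secondary technical point'' about localizing plane waves compatible with $v=T[\theta]$ is also slightly off target for IPM: with $m$ as in \eqref{e:IPM} the relation is equivalent to the differential constraints $\div v=0$ and $\partial_1 v_2-\partial_2 v_1=-\partial_1\theta$, so the system does fit the local form \eqref{e:LRbis} and compactly supported potentials are available (the genuinely nonlocal difficulty you describe concerns the general active scalars of Theorem \ref{t:Shvid}, handled by perturbations that are only approximately localized). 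So: fix the constitutive set to encode $|\theta|=1$, establish (P) for a suitable $\mathcal{U}$ containing the origin (either by computing the hull as in \cite{Szekelyhidi} or by the nested-laminate construction of \cite{CFG}), and then the compact-time-support Baire argument does the rest; the subsolution-from-zero construction and the appeal to Proposition \ref{p:criterion_str} should be dropped.
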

This was generalized by Shvydkoy in \cite{Shvydkoy} to all even $m(\xi)$ satisfying a mild
additional regularity assumption, namely
\begin{theorem}\label{t:Shvid}
Assume $m$ is even, $0$ homogenous and 
the set
\[
\{m(\xi): m|_{\mathbb{S}^{n-1}} \mbox{ 
is a regular immersion around $\xi$}\}
\]
 spans $\R^n$. 
Then there exist infinitely many weak solutions of 
\eqref{e:ac_lr} and \eqref{e:ac_nc} in $L^\infty (\T^2\times [0, +\infty[)$ with $\rho_0=0$.
\end{theorem}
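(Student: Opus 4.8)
The plan is to recast \eqref{e:ac_lr}--\eqref{e:ac_nc} as a differential inclusion of Tartar type and to run the Baire category scheme of Section \ref{s:applications}. I would take as state variable $z=(\theta,v,q):\mathcal{D}\to\R\times\R^n\times\R^n$ on the space-time cylinder $\mathcal{D}=\T^n\times(0,T)$, let the constant-coefficient (but, through $v=T[\theta]$, nonlocal) linear operator of \eqref{e:LRbis} encode the three relations in \eqref{e:ac_lr}, and let the pointwise nonlinear constraint be a bounded piece of the quadratic manifold $K=\{(\theta,v,q):q=\theta v\}$. Since $\theta_0=0$ is prescribed, it suffices to produce weak solutions supported in $\{0<t<T\}$, which one then extends by zero to $\T^n\times[0,\infty)$; the scheme in fact delivers a whole residual family of them.

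First I would compute the wave cone. A plane wave $z=\bar z\,h(\xi\cdot x+\tau t)$ solves \eqref{e:ac_lr} exactly when $\bar v=\bar\theta\,m(\xi)$ -- here the $0$-homogeneity \eqref{e:0hom} turns this into an exact identity at a single frequency, and \eqref{e:divfree} makes $\div v=0$ automatic -- together with $\tau\bar\theta+\xi\cdot\bar q=0$; for $\bar\theta\neq0$ both $\tau$ and $\bar q$ are then free, so that $\Lambda$ contains every state $(s,\,s\,m(\xi),\,r)$ with $\xi\in S^{n-1}$, $s\in\R$ and $r\in\R^n$. The decisive point is the effect of such oscillations on the nonlinear relation: when $\theta$ oscillates with amplitude $a$ about a mean in the direction $\xi$, the field $v=T[\theta]$ oscillates \emph{in phase} with amplitude $a\,m(\xi)$ -- this is exactly where \emph{evenness} of $m$ is used -- so that the mean of the product $\theta v$ gets shifted by a positive multiple of $m(\xi)$, a genuine ``Reynolds stress'' pointing along $m(\xi)$. (For odd $m$ the fields $v$ and $\theta$ would be out of phase and this correction would vanish; this is the concrete reason the vorticity formulation of $2$-d Euler lies outside the scheme.)

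The two hypotheses on $m$ now enter. Let $C\subset\R^n$ be the closed convex cone generated by $\{m(\xi):m|_{S^{n-1}}\text{ is a regular immersion around }\xi\}$; the spanning assumption guarantees that $C$ has nonempty interior, so that
$$
\mathcal{U}:=\bigl\{(\theta,v,q):\ q-\theta v\in\mathrm{int}\,C,\ |z|<M\bigr\}
$$
is a nonempty open subset of $\R^{N}$ with $K\subset\overline{\mathcal{U}}$ and $\dist(z,K)\approx|q-\theta v|$ on $\mathcal{U}$. One must first exhibit a nonzero smooth element of $\mathcal{U}$, supported in $\{0<t<T\}$ and solving \eqref{e:ac_lr} -- a step which, in contrast to the Euler case, is already nontrivial, since the zero field is in general \emph{not} a strict subsolution (one typically has $0\in\partial C$); an explicit building block, as in \cite{CFG,Shvydkoy}, does this and populates $X_0$. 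The heart of the matter is then the Perturbation Property (P) for $\mathcal{U}$: given $z\in\mathcal{U}$ with gap $w_0=q-\theta v\in\mathrm{int}\,C$, one writes $\tfrac12 w_0=\sum_k a_k^2\,m(\xi_k)$ with the $\xi_k$ drawn from a fixed finite set of immersion points, and adds to $z$, for each $k$, a \emph{localized plane wave} oscillating in the direction $\xi_k$ with amplitude $\sim a_k\sim|w_0|^{1/2}$; this decreases the gap by a definite fraction while keeping the new triple inside $\mathcal{U}$, and, since $\sum_k a_k^2\approx|w_0|\approx\dist(z,K)$, the resulting perturbation obeys $\int|z_j|^2\,dy\geq c\,\dist(z,K)$ for a fixed $c>0$ -- exactly (P), with $\eps$ linear. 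I expect the construction of these localized plane waves as \emph{exact} compactly supported solutions of the \emph{nonlocal} linear system \eqref{e:ac_lr} to be the main technical obstacle: the $v$-component of a compactly supported $\theta$ is itself never compactly supported, so one cannot simply cut off a single plane wave. Following \cite{CFG,Shvydkoy} one instead passes to potentials and superposes genuine Fourier modes over a small neighbourhood of each $\xi_k$ on $S^{n-1}$, and it is precisely here that the \emph{regular immersion} hypothesis is invoked, to ensure that the wave-cone data vary nondegenerately as $\xi$ moves on $S^{n-1}$, so that the superposition localizes with lower-order errors.

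Once (P) is available, the covering lemma and the Baire argument of Section \ref{s:applications} apply unchanged: on the weak$^*$ closure $X$ of the set $X_0$ of smooth compactly supported subsolutions in $\mathcal{D}$, the functional $z\mapsto\int_{\mathcal{D}}|z|^2$ is Baire-$1$ and hence continuous on a residual set, and at each of its continuity points $z$ satisfies the nonlinear constraint \eqref{e:ac_nc} almost everywhere. This yields uncountably many -- in particular infinitely many -- bounded weak solutions of \eqref{e:ac_lr} and \eqref{e:ac_nc} on $\T^n\times[0,\infty)$ with $\theta_0=0$, as claimed.
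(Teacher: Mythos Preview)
The paper is a survey and does not itself prove Theorem~\ref{t:Shvid}; it cites \cite{Shvydkoy} and summarizes the approach in Section~\ref{s:hulls} and the preceding Remark. Your Tartar reformulation, wave-cone computation, explanation of why evenness of $m$ is needed, and the Baire endgame are all correct. But your route diverges from the one described in two coupled ways. The paper says explicitly that \cite{CFG,Shvydkoy} do \emph{not} compute a large hull and then use simple plane waves; they exhibit a small open set $\mathcal{U}$ and verify (P) via ``an infinite nested sequence of repeated plane waves'', i.e.\ $T_4$-type laminates. Your cone-based $\mathcal{U}=\{q-\theta v\in\mathrm{int}\,C,\ |z|<M\}$ together with the one-shot decomposition $\tfrac12 w_0=\sum_k a_k^2 m(\xi_k)$ is closer in spirit to the explicit hull computation of \cite{Szekelyhidi} for the specific IPM symbol \eqref{e:IPM}, not to \cite{Shvydkoy} for general even $m$.

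The concrete gap is in how you dispatch the localization and the regular-immersion hypothesis. You invoke the immersion condition to build \emph{compactly supported} localized plane waves by superposing Fourier modes over an arc of $S^{n-1}$. The paper's Remark says the opposite: because $v=T[\theta]$ is nonlocal, compactly supported $z_j$ in (P) are \emph{not} available; one relaxes (P) so that $z_j\to 0$ uniformly outside $B_1$ (Lemma~2.1 of \cite{Shvydkoy}). In \cite{Shvydkoy} the immersion hypothesis is used to ensure that the wave-cone geometry is rich enough to manufacture the required laminate configurations, not to engineer spatial decay of wave packets. So the ``main technical obstacle'' you flag is real, but the resolution you sketch is not the one in the cited proof, and as written it does not stand on its own.
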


\bigskip

%%%%%%%%%%%%%%%%%%%%%%%%%%%%%%%
\subsection{Laminates and hulls}\label{s:hulls} 
%%%%%%%%%%%%%%%%%%%%%%%%%%%%%%%

The proofs of Theorems \ref{t:CFG} and \ref{t:Shvid} in \cite{CFG,Shvydkoy} use some
refined tools which were developed in the theory of laminates and differential inclusions and they
present some substantial differences with the proofs in \cite{DS1,DS2}. In order
to address these differences we start by recalling some of the standard  notions in the
theory of differential inclusions. These notions have been developed in the particular case of
gradient vector fields (i.e. when \eqref{e:LRbis} is given by ${\rm curl}\, z = 0$). Nevertheless the
extension to systems of the form \eqref{e:LRbis}-\eqref{e:CRbis} is straightforward.

\begin{remark} In the case of Theorem \ref{t:Shvid} several modifications of this general strategy
are necessary. First of all, as mentioned above, \eqref{e:LRbis} needs to be replaced by \eqref{e:ac_lr}.
This in turn means that in general it is not possible to obtain a sequence $\{z_j\}$ as in (P) if
we insist that each function is compactly supported. It is however possible to build up such a sequence
if instead of requiring that its support be compact, we require that $z_j$ converges uniformly to $0$ in
the complement of $B_1 (0)$ (cp. Lemma 2.1 in \cite{Shvydkoy}).
\end{remark}

Returning to the general considerations at the beginning of this chapter, we recall that a central point is to find an
open set $\mathcal{U}$ satisfying the perturbation property (P). 
One possible candidate would be to take the largest open set $\mathcal{U}_{max}$ satisfying (P). Obviously this set is particularly meaningful since it gives the largest possible space $X$ for which the genericity conclusion above holds. 
Moreover, this has the advantage that -- at least in 
many relevant cases -- the set $\mathcal{U}_{max}$ coincides with the interior of the lambda-convex hull $K^{\Lambda}$, which in turn can be characterized by separation arguments. For instance, in Theorem \ref{t:criterion} condition \eqref{e:ass2} characterizes precisely the interior of $K^\Lambda$. Moreover,
in this case of the interior of $K^\Lambda$ is the interior of the convex hull $K^{co}$. 
As a side remark observe that, by Jensen's inequality, any set $\mathcal{U}$ with property (P) must
necessarily be a subset
of the convex hull of $K$.

In \cite{CFG} and \cite{Shvydkoy}
the authors avoid calculating the full hull and instead restrict themselves to exhibiting a 
nontrivial (but possibly much smaller) open set $\mathcal{U}$ satisfying (P). However,
in exchange they are forced to use much more complicated sequences $z_j$. Indeed, 
the $z_j$'s used in \cite{DS1} are localizations of simple
plane waves, whereas the ones used in \cite{CFG} and \cite{Shvydkoy} arise as an infinite
nested sequence of repeated plane waves.

The obvious advantage of the method introduced in \cite{CFG} and used in \cite{Shvydkoy}
is that it seems to be fairly robust and general. This is useful in cases where 
an explicit computation of the hull $K^{\Lambda}$ (or even of the convex hull!) is out of reach due to the
high complexity and high dimensionality.

On the other hand, the advantage of computing the hull $K^\Lambda$ is that it gives optimal criteria for
wild initial data, as in Section \ref{s:wild}. Indeed, observe that in Theorem \ref{t:criterion},
the initial data $v_0$ is given by the subsolution through \eqref{e:initialdatum}. In the case of 
Theorem \ref{t:CFG}, that is, for the incompressible porous medium equation, the set $K^\Lambda$
was computed in \cite{Szekelyhidi}. As a consequence one obtains the following existence
theorem for the Muskat problem (see Theorem 1.1 in \cite{Szekelyhidi}).

\begin{theorem}\label{t:lsz-ipm}
Assume $m$ is given by \eqref{e:IPM}. There exist infinitely many weak solutions of 
\eqref{e:ac_lr} and \eqref{e:ac_nc} in $L^\infty(\R^2 \times [0, +\infty))$ with $|\theta|=1$ a.e. and 
\[
\theta_0 (x)\;=\; \left\{
\begin{array}{ll}
1 & \mbox{if $x_2>0$}\\
-1 & \mbox{if $x_2<0$}\, .
\end{array}\right.
\]
\end{theorem}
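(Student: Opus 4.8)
The plan is to deduce Theorem \ref{t:lsz-ipm} from Proposition \ref{p:criterion_str} (in its analog for the incompressible porous medium equation, i.e. for the system \eqref{e:ac_lr}--\eqref{e:ac_nc} with $m$ given by \eqref{e:IPM}), exactly as Theorem \ref{t:shear} is obtained from Proposition \ref{p:criterion_str} for the Euler case. Thus the whole argument reduces to two independent tasks: (i) computing the $\Lambda$-convex hull $K^\Lambda$ for IPM, where $K=\{(\theta,v,q):\,|\theta|=1,\ v=T[\theta],\ q=\theta v\}$, so that the "relaxed" pointwise constraint defining subsolutions is made explicit; and (ii) exhibiting a continuous subsolution $(\bar\theta,\bar v,\bar q)$ whose $\theta$-component attains the boundary datum in \eqref{e:initialdatum}-fashion and which is a strict subsolution (in the sense of condition (2) of Proposition \ref{p:criterion_str}) on the open set where it is not already a solution, with the measure-zero boundary condition (3) satisfied on each time slice.

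For step (i), I would follow the computation in \cite{Szekelyhidi}: write $z=(\theta,v,q)$, identify the wave cone $\Lambda$ from the symbol of the linear operator in \eqref{e:ac_lr} (accounting for the constraint $v=T[\theta]$ via the multiplier $m$), and then characterize $K^\Lambda$ by separation/$\Lambda$-convexity arguments. For IPM the answer is expected to be a set cut out by the pointwise inequalities $|v|^2 \le 1$ together with the bound relating $q$, $\theta$ and $v$ coming from the constraint $q=\theta v$ on the extreme points; concretely the interior of $K^\Lambda$ should be describable as the set where a certain quadratic/linear inequality holds strictly. I would quote Theorem 1.1 of \cite{Szekelyhidi} for the precise description rather than rederive it.

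For step (ii) — which I expect to be the main obstacle — I would look for a self-similar subsolution depending only on the variable $\zeta = x_2/\sqrt{t}$ (or $x_2/t$, matching the scaling of IPM), reducing the problem to a one-dimensional ODE/inequality problem in a mixing zone $\{c_1\sqrt t < x_2 < c_2 \sqrt t\}$ (or the appropriate cone), with $\bar\theta = \pm 1$ outside this zone and $\bar\theta$ linear-interpolation-like, $\bar v$ and $\bar q$ chosen from the equations $\partial_t\bar\theta + \partial_{x_2}\bar q_2=0$ and $v=T[\theta]$. The construction must: stay in the interior of $K^\Lambda$ strictly inside the mixing zone; match continuously to the exact solution $(\pm1,0,0)$ on the boundary of the zone; and have time-slice boundaries of measure zero (immediate, since they are hyperplanes $x_2=\text{const}\cdot\sqrt t$). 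The key difficulty is verifying the strict pointwise inequality defining the interior of $K^\Lambda$ throughout the mixing zone — this is where one must choose the growth rate of the zone (the constants $c_1,c_2$, or the shape of the profile) correctly so that there is "enough room" in the relaxed constraint; this is exactly the delicate point carried out in \cite{Szekelyhidi} for the Muskat setup, and in \cite{SzVortex} for the analogous vortex-sheet construction.

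Finally, once such a subsolution is in hand, Proposition \ref{p:criterion_str} (in the IPM formulation) produces infinitely many weak solutions $\theta$ of \eqref{e:ac_lr}--\eqref{e:ac_nc} in $L^\infty(\R^2\times[0,\infty))$ with $|\theta|=1$ a.e., and the initial-data condition \eqref{e:initialdatum} forces $\theta_0$ to be the prescribed step function; infinitely many of these lie in $C((0,\infty),L^2_{loc})$, completing the proof. The only genuine new content beyond \cite{CFG} is therefore the hull computation plus the subsolution construction, both of which are exactly Theorem 1.1 of \cite{Szekelyhidi}; so the proof here amounts to citing that result and observing its statement is the claim.
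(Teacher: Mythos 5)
Your proposal follows essentially the same route as the paper: the survey offers no independent argument but obtains the theorem by citing \cite{Szekelyhidi}, i.e.\ the computation of the hull $K^\Lambda$ for IPM together with the construction of a suitable continuous subsolution with a mixing zone around $\{x_2=0\}$, fed into the general relaxation criterion in the spirit of Proposition \ref{p:criterion_str} (see the Appendix of \cite{Szekelyhidi}), which is exactly your plan. One small correction: the right self-similar variable is $x_2/t$ rather than $x_2/\sqrt{t}$ --- as the paper notes, the mixing zone of the subsolution expands linearly in time, consistent with the $0$-homogeneity of the multiplier $m$.
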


In fact the solutions constructed in this theorem exhibit a mixing zone (i.e. where $\theta$ oscillates wildly between $\pm 1$) 
around the initial interface $\{x_2=0\}$, which is expanding linearly in time. 

We recall that the coarse-grained flow for this problem has been analysed in detail in\cite{Otto1,Otto2}. 
In \cite{Otto1} F.~Otto introduced a relaxation approach for the incompressible porous media equation, based on 
the gradient flow formulation and using ideas from mass transport. It was shown that, under certain assumptions, there exists a {\it unique} ''relaxed'' solution $\overline{\theta}$, representing a kind of coarse-grained density. Moreover, Otto proved  that, in general, the mixing zone (where the coarse-grained density $\overline{\theta}$ is strictly between $\pm 1$) grows linearly in time, with the possible exception of a small set of volume fraction $O(t^{-1/2})$ (cp. with \cite[Remark 2.1]{Otto2}). Following the general considerations in Section \ref{ss:reynolds} we can interpret $\overline{\theta}$ as the subsolution corresponding to the solutions in Theorem \ref{t:lsz-ipm}, for which
there exists a sequence $\rho_k$ of weak solutions such that
$\rho_k\overset{*}{\rightharpoonup} \overline{\rho}$. 

Although subsolutions are clearly not unique, the coarse-grained density of Otto turns out to be extremal in the sense that it corresponds to the maximal expansion of the mixing zone. It is interesting to note in this connection, that, although weak solutions are clearly not unique, there is a way to identify a selection criterion among subsolutions which leads to uniqueness.

%%%%%%%%%%%%%%%%%%%%%%%%%%%%%%%
\section{Gromov's h-principle and the Nash-Kuiper Theorem} 
%%%%%%%%%%%%%%%%%%%%%%%%%%%%%%%

The origin of convex integration lies in the famous Nash-Kuiper theorem. 
In this section we briefly recall some landmark results from the theory of isometric embeddings.

Let $M^n$ be a smooth compact manifold 
of dimension $n\geq 2$, equipped with a Riemannian metric $g$. An isometric immersion
of $(M^n,g)$ into $\R^m$ is a map $u\in C^1(M^n;\R^m)$ such that the induced metric agrees with $g$. In local coordinates this amounts to 
the system
\begin{equation}\label{e:equations}
\partial_iu\cdot\partial_ju=g_{ij}
\end{equation}
consisting of $n(n+1)/2$ equations in $m$ unknowns. If in addition $u$ is injective, it is an isometric embedding. 
Assume for the moment that $g\in C^{\infty}$. The two classical theorems concerning the solvability of this system are: 
\begin{enumerate}
\item[(A)] if $m\geq (n+2)(n+3)/2$, then any short embedding can be uniformly approximated by  isometric embeddings of class $C^{\infty}$ (Nash \cite{Nash56}, Gromov \cite{Gromov});
\item[(B)] if $m\geq n+1$, then any short embedding can be uniformly approximated by isometric embeddings of class $C^1$ (Nash \cite{Nash54}, Kuiper \cite{Kuiper55}).
\end{enumerate}
Recall that a short embedding is an injective map $u:M^n\to \R^m$ such that the metric induced on $M$ by $u$ is shorter than $g$. In coordinates this means that
\begin{equation}\label{e:sub_for_nash}
(\partial_iu\cdot\partial_ju)\leq (g_{ij})
\end{equation}
in the sense of quadratic forms. Thus, (A) and (B) are not merely existence theorems, they show that there exists a huge (essentially $C^0$-dense) set of solutions. This type of abundance of solutions is a central aspect of Gromov's $h$-principle, for which the isometric embedding problem is a primary example (see \cite{Gromov,Eliashberg}). 

There is a clear formal analogy between \eqref{e:equations}-\eqref{e:sub_for_nash} and \eqref{e:Cauchy},\eqref{e:LR},\eqref{e:CR}.
First of all, note that the Reynold stress measures the defect to being a solution of the Euler equations and it is in general a nonnegative
symmetric tensor, whereas $g_{ij} - \partial_iu \cdot \partial_j u$ measures the defect to being isometric and, for a short
map, is also a nonnegative symmetric tensor.
More precisely \eqref{e:equations} can be formulated for the deformation gradient $A:= Du$ as 
the coupling of the linear constraint 
$$
{\rm curl}\, A = 0
$$
with the nonlinear relation 
$$
A^t A = g.
$$
In this sense short maps are "subsolutions" to the 
isometric embedding problems in the spirit of Definition \ref{d:subsolution}. Along this line of thought, Theorem \ref{t:criterion} is then the analogue
for the Euler equations of the Nash-Kuiper result (B). However note that, strictly speaking, the formal analog of statement (B) would
be replacing $L^\infty$ by $C^0$ in Theorem \ref{t:criterion}. This type of result is not yet available.

\medskip

Statement (B) is rather surprising for two reasons. First of all, for $n\geq 3$ and $m=n+1$, the system \eqref{e:equations}
is overdetermined. Moreover, for $n=2$ we can compare (B) to the classical rigidity result concerning the Weyl problem: if $(S^2,g)$ is a 
compact Riemannian surface with positive Gauss curvature and $u\in C^2$ is an isometric immersion into $\R^3$, then $u$ is uniquely determined up to a rigid motion (\cite{CohnVossen,Herglotz}, see also \cite{Spivak5} Chapter 12 for a thorough discussion). Thus it is clear 
that isometric immersions have a completely different qualitative behaviour  at low and high regularity (i.e. below and above $C^2$).

A strikingly similar phenomenon holds for the Euler equations: the when coupled with the energy
constraint $|v|^2 = 2\bar{e}$ they are also formally overdetermined. Moreover $C^1$ solutions of the Cauchy problem are unique.
We will return to these similarities in Section \ref{s:C1alpha}.

%%%%%%%%%%%%%%%%%%%%%%%%%%%%%%%
\section{Onsager's Conjecture}
%%%%%%%%%%%%%%%%%%%%%%%%%%%%%%%

The relevance of weak solutions of the Euler equations to turbulence has long been surmised. One key question is related to the phenomenon of anomalous dissipation. This experimentally observed fact, namely that the rate of energy dissipation in the vanishing viscosity limit stays above a certain non-zero constant, is expected to  arise from a mechanism of transporting energy from large to small scales (known as an energy cascade) via the nonlinear transport term in the Navier-Stokes equations, rather than the (dissipative) viscosity term. Motivated by this idea, Onsager conjectured in the 1940s that there exist weak solutions of the Euler equations which dissipate energy. More precisely, the conjecture was that there exist dissipative weak solutions in the H\"older space $C^{\alpha}$ with $\alpha<1/3$, whereas if $\alpha>1/3$, then the energy is conserved \cite{Onsager,EyinkOnsager}. 

The latter has been rigorously proved with various sharper versions in \cite{Eyink,ConstantinETiti,DuchonRobert}, in particular we refer the reader to the recent survey \cite{ShvydkoyOnsager}. 

On the other hand there is no known construction of a dissipative weak solution in $C^{\alpha}$ with any $\alpha\geq 0$. As pointed out in Theorem \ref{t:energy_sucks}, one can use Theorem \ref{t:criterion} to produce dissipative solutions which are merely $L^{\infty}$, and prior to that the only known construction of a weak solution with dissipation, due to Shnirelman \cite{Shnirelmandecrease} produced an $L^2$ solution. Despite this, it is instructive to take a second look at the construction in Theorem \ref{t:criterion} in light of expectations regarding the energy spectrum and the conjecture of Onsager.

In Section \ref{s:applications} we presented the so called Baire category method, which is in some sense not constructive. However,
the same idea of adding oscillatory perturbations can be implemented in a constructive way as well. See for instance Section 5 of \cite{DS1}. In a nutshell the idea is to define a sequence of subsolutions $(v_k,u_k,q_k)$ as
\begin{equation}\label{e:constructive}
(v_{k+1},u_{k+1},q_{k+1})(x,t)=(v_k,u_k,q_k)(x,t)+(V_k,U_k,Q_k)(x,t,\lambda_kx,\lambda_kt),
\end{equation}
where 
$$
(V_k,U_k,Q_k)(x,t,\xi,\tau)
$$
is a periodic plane-wave solution of \eqref{e:LR} in the variables $(\xi,\tau)$ with average 0, parametrized by $(x,t)$, and $\lambda_k$ is a (large) frequency to be chosen. The aim is to choose the plane-wave $(V_k,U_k,Q_k)$ and the frequency $\lambda_k$ iteratively in such a way that 
\begin{itemize}
\item $(v_k,u_k,q_k)$ continues to satisfy \eqref{e:LR} (strictly speaking this requires an additional corrector term in the scheme \eqref{e:constructive});
\item the inequality \eqref{e:CR} holds
\item and
\begin{equation}\label{e:terza}
v_k\otimes v_k-u_k\to \frac{2}{n}\bar{e}I.
\end{equation}
\end{itemize}
Observe that, because of the inequality \eqref{e:CR}, it suffices to show the weak convergence in
\eqref{e:terza}.

The role of the frequency can be explained as follows: using \eqref{e:constructive} we expand as
$$
v_{k+1}\otimes v_{k+1}-u_{k+1}=v_k\otimes v_k-u_k+\biggl(v_k\otimes V_k+V_k\otimes v_k-U_k\biggr)+V_k\otimes V_k.
$$
For large $\lambda_k$ the term in brackets converges to zero weakly, whereas the term $V_k\otimes V_k$ is non-negative. 
Therefore, if the sequence $\{\lambda_k\}_{k\in \N}$ converges to $\infty$ sufficiently fast, then the sequence of tensors $\{v_k\otimes v_k-u_k\}$ is monotone, with a uniform bound given by \eqref{e:CR}. The strong convergence follows.

Thus, convergence of this constructive scheme is improved by choosing the frequencies $\lambda_k$ higher and higher. In terms of the energy spectrum this amounts to eddies (the periodic perturbations) being placed in well separated frequencies where the interaction between neighboring frequencies is negligible.

On the other hand clearly any (fractional) derivative or H\"older norm of $v_k$ gets worse by such a choice of $\lambda_k$. The best regularity admitted by the scheme corresponds to the slowest rate at which the frequencies $\lambda_k$ tend to infinity whilst still leading to convergence. 

%%%%%%%%%%%%%%%%%%%%%%%%%%%%%%%
\subsection{A toy example}\hfill
%%%%%%%%%%%%%%%%%%%%%%%%%%%%%%%

We will consider here a toy model, where the considerations above can be demonstrated and which has been suggested by the anonymous referee to \cite{DS1}. Consider the problem of exhibiting functions $u:[0,1]\to \R$ such that
$|u|=1$. In the context of \eqref{e:LRbis}-\eqref{e:CRbis} this corresponds to $K=\{-1,1\}$ and the differential constraint being void. The following scheme aims at producing such functions. We assume to start with
a given function $u_0:[0,1]\to \R$ and build a sequence with the following iteration scheme:
$$
u_{k+1}(x)=u_k(x)+\tfrac{1}{2}\bigl[1-u_k^2(x)\bigr]s(\lambda_kx)\, ,
$$
where $s:\R\to\R$ is the 1-periodic extension of ${\bf 1}_{(0,1/2]}-{\bf 1}_{(1/2,1]}$ and $\lambda_k>1$ is a sequence of frequencies still to be fixed. The following assertions are straightforward:
\begin{itemize}
\item If $\sup_{[0,1]}|u_k|<1$, then also $\sup_{[0,1]}|u_{k+1}|<1$. 
\item If $\sup_{[0,1]}|u_0|<1$ and $u_k\to u_{\infty}$ in $L^1(0,1)$, then $|u_{\infty}|=1$ a.e.
\end{itemize}
Therefore, in order to produce a solution to our toy problem, it suffices to choose the sequence
$\{\lambda_k\}$ so to ensure the strong convergence of $u_k$. To this end observe that
$$
\int_0^1|u_{k+1}|^2\,dx=\int_0^1\left(|u_k|^2+\tfrac{1}{4}(1-u_k^2)^2+u_k(1-u_k^2)s(\lambda_kx)\right)\,dx.
$$
Moreover, as $\lambda\to\infty$, we have $s(\lambda x)\rightharpoonup 0$ in $L^2(0,1)$. Therefore, by choosing $\lambda_k$ sufficiently large (depending on $u_k$), we can ensure that
$$
\int_0^1|u_{k+1}|^2\,dx\geq \int_0^1|u_k|^2\,dx+\int_0^1 \tfrac{1}{8}(1-u_k^2)^2\,dx.
$$
The strong convergence follows then easily. Here we see that choosing $\lambda_k$ to be a rapidly increasing sequence ``helps'' the strong convergence of the scheme.  

However, it is also clear that for any additional regularity of the limit $u_{\infty}$ one should choose $\lambda_k$ to increase as slowly as possible. More precisely, the optimal regularity that is reachable via this iteration scheme will depend on the connection between the choice of $\lambda_k$ with the rate of convergence of the scheme. To see this, observe that -- roughly speaking -- fractional Sobolev regularity of $u_{\infty}$ will follow from interpolating between the norms
\begin{eqnarray*}
\|u_{k+1}-u_k\|_{L^1}&\sim& \int_0^1(1-u_k^2)\,dx\\
\|u_{k+1}-u_k\|_{BV}&\sim& \lambda_k\int_0^1(1-u_k^2)\,dx
\end{eqnarray*}
Therefore the following statement is of interest, showing that exponential growth of the frequencies leads to exponential convergence of the scheme:

\begin{lemma}
Let $\lambda_k=2^k$. Then 
$$
\int_0^1(1-u_k^2)\,dx\leq \left(\tfrac{7}{8}\right)^k\int_0^1(1-u_0^2)\,dx.
$$
\end{lemma}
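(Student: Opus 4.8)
The plan is to prove the estimate by induction on $k$: assuming $\delta_k:=\int_0^1(1-u_k^2)\,dx\le(7/8)^k\delta_0$, it is enough to establish the single--step inequality $\delta_{k+1}\le\tfrac78\delta_k$. For concreteness I take $u_0\equiv 0$ (the case in which the mechanism is cleanest); then $\sup_{[0,1]}|u_k|<1$ for all $k$ by the first of the two bullets recorded above, so that $0\le 1-u_k^2\le 1$ throughout.

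First I would derive an exact recursion for $\delta_k$. Squaring the defining relation for $u_{k+1}$ and using that $s$ takes only the values $\pm1$, so $s(\lambda_k x)^2\equiv1$, gives
$$u_{k+1}^2=u_k^2+u_k(1-u_k^2)\,s(\lambda_k x)+\tfrac14(1-u_k^2)^2 .$$
Here the choice $\lambda_k=2^k$ is used: since $u_0\equiv0$, an induction on $k$ shows that $u_k$ is constant on every dyadic interval $\bigl((j-1)2^{-k},j2^{-k}\bigr]$, because on such an interval $s(2^kx)$ equals $+1$ on the left half and $-1$ on the right half. Hence $s(\lambda_kx)$ has vanishing mean on each interval on which $u_k$ is constant, the cross term integrates to $0$, and
$$\delta_{k+1}=\delta_k-\tfrac14\int_0^1(1-u_k^2)^2\,dx .$$

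The heart of the matter is then the second--moment estimate
$$\int_0^1(1-u_k^2)^2\,dx\ \ge\ \tfrac12\int_0^1(1-u_k^2)\,dx\ =\ \tfrac12\,\delta_k ,$$
equivalently $\int_0^1(1-u_k^2)(1-2u_k^2)\,dx\ge0$; combined with the recursion this yields $\delta_{k+1}\le\delta_k-\tfrac18\delta_k=\tfrac78\delta_k$, closing the induction. I would prove it from the explicit combinatorial picture above: the $2^k$ values assumed by $u_k$ are generated from $0$ by iterating $a\mapsto a\pm\tfrac12(1-a^2)$, and for such a value $a$ --- writing $t:=1-a^2$ --- its two children have $1-(\cdot)^2$ equal to $t\bigl(1\mp a-\tfrac14 t\bigr)$. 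Splitting $[0,1]$ according to whether $u_k^2\le\tfrac12$ (where $1-u_k^2\ge\tfrac12$, so $(1-u_k^2)^2\ge\tfrac12(1-u_k^2)$ pointwise) or $u_k^2>\tfrac12$ (where the deficit $(1-u_k^2)\bigl(\tfrac12-(1-u_k^2)\bigr)$ is small), one reduces the claim to showing that enough of the mass contributing to $\delta_k$ --- weighted, as it must be, by $1-u_k^2$ --- sits on the first region; this is forced by the tree recursion and is the technical core of the argument.

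The main obstacle is precisely this second--moment bound. Jensen's inequality alone only gives $\int_0^1(1-u_k^2)^2\,dx\ge\delta_k^2$, which fed into the recursion produces merely the \emph{logistic} estimate $\delta_{k+1}\le\delta_k-\tfrac14\delta_k^2$, hence only polynomial decay $\delta_k=O(1/k)$; it is the dyadic, frequency--doubling structure of the scheme that upgrades $\delta_k^2$ to a bound linear in $\delta_k$ and thereby delivers the exponential rate $(7/8)^k$. Carrying the requisite control on the distribution of the values of $u_k$ through the induction is the delicate point.
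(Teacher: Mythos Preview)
Your recursion $\delta_{k+1}=\delta_k-\tfrac14\int_0^1(1-u_k^2)^2\,dx$ is correct, but the step on which everything rests---the second--moment bound $\int_0^1(1-u_k^2)^2\,dx\ge\tfrac12\,\delta_k$---is only asserted, not proved. Your sketch (split according to $u_k^2\lessgtr\tfrac12$, appeal to the ``tree recursion'') does not constitute an argument, and the difficulty is real: the corresponding \emph{pointwise} inequality
\[
\tfrac12\bigl[(1-u_+^2)+(1-u_-^2)\bigr]\le\tfrac78\,(1-u^2),\qquad u_\pm:=u\pm\tfrac12(1-u^2),
\]
is \emph{false} near $|u|=1$, since the left side equals $(1-u^2)-\tfrac14(1-u^2)^2$ and the ratio tends to~$1$. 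So, unlike the recursion itself, your moment bound cannot be obtained value by value; it would genuinely require propagating information about the distribution of the $2^k$ values of $u_k$ through the iteration---exactly what you flag as ``the delicate point'' and leave undone.

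The paper sidesteps this entirely by changing the test function: instead of $g(u)=1-u^2$ it works with $f(u)=(1-u^2)^{1/2}$. The key identity is $f''(u)\,\tfrac14(1-u^2)^2=-\tfrac14 f(u)$, and since $f^{(4)}\le 0$ on $(-1,1)$ (so $f''$ is concave) one has
\[
\tfrac12 f(u-v)+\tfrac12 f(u+v)\le f(u)+\tfrac12 f''(u)\,v^2\,.
\]
With $v=\tfrac12(1-u^2)$ this gives the \emph{pointwise} contraction $\tfrac12[f(u_+)+f(u_-)]\le\tfrac78 f(u)$ for every $u\in(-1,1)$; integrating yields $\int f(u_{k+1})\le\tfrac78\int f(u_k)$ with no distributional input whatsoever. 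The statement for $\int(1-u_k^2)$ then follows from $1-u^2\le(1-u^2)^{1/2}$. In short: the right quantity to iterate is $\sqrt{1-u^2}$, for which the factor $7/8$ is automatic rather than the hard part of the proof.
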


\begin{proof}
By the choice of the oscillatory function $s$ we see that for any continuous function $f\in C(-1,1)$ we have
$$
\int_0^1f(u_{k+1})\,dx=\int_0^1\tfrac{1}{2}\bigl[f\bigl(u_k-\tfrac{1}{2}(1-u_k^2)\bigr)+ \tfrac{1}{2}f\bigl(u_k+\tfrac{1}{2}(1-u_k^2)\bigr)\bigr]\,dx
$$
Now set $f(u):=(1-u^2)^{1/2}$. By direct calculation we obtain
$$
f''(u)=-(1-u^2)^{-3/2}
$$
and $\tfrac{d^4f}{du^4}\leq 0$ on $(-1,1)$. It follows that $f''(u)(1-u^2)^2=-f(u)$ and
$$
\tfrac{1}{2}f(u-v)+\tfrac{1}{2}f(u+v)\leq f(u)+\tfrac{1}{2}f''(u)v^2.
$$
In particular, setting $v=\tfrac{1}{2}(1-u^2)$ we obtain
$$
\tfrac{1}{2}f\Bigl(u-\tfrac{1}{2}(1-u^2)\Bigr)+ \tfrac{1}{2}f\Bigl(u+\tfrac{1}{2}(1-u^2)\Bigr)\leq \frac{7}{8}f(u).
$$
We conclude
$$
\int_0^1f(u_{k+1})\,dx\leq \frac{7}{8}\int_0^1f(u_k)\,dx
$$
and the lemma follows.
\end{proof}

%%%%%%%%%%%%%%%%%%%%%%%%%%%%%%%
\subsection{ $C^{1,\alpha}$ isometric immersions}\label{s:C1alpha}\label{s:c1alpha}\hfill
%%%%%%%%%%%%%%%%%%%%%%%%%%%%%%%

The question of a sharp regularity threshold has been the object of investigation for the isometric embedding of surfaces as well
(see for instance \cite{Gromov}, \cite{Yau}). As already mentioned, the isometric embeddings of ${\mathbb S}^2$ into $\R^3$ are rigid in the class $C^2$, whereas the h-principle holds for $C^1$. Borisov investigated embeddings of class $C^{1,\alpha}$, and proved the rigidity for $\alpha>2/3$ (\cite{BorisovRigidity1}, \cite{BorisovRigidity2}) and the h-principle for $\alpha<1/13$ (although the latter was announced in 1965, see\cite{Borisov65}, a partial proof only appeared in 2004 \cite{Borisov2004}). In \cite{CDSz} we returned to this problem, and gave a more modern PDE proof of the h-principle for $\alpha<1/7$, namely

\begin{theorem}[Local existence]\label{t:local}
Let $n\in \N$ and $g_0\in \sym^+_n$. There
exists $r>0$ such that the following holds for
any smooth bounded open set $\Omega\subset\R^n$
and any Riemannian metric $g\in C^{\beta}(\overline{\Omega})$ with 
$\beta>0$ and $\|g-g_0\|_{C^0}\leq r$. 
There exists a constant $\delta_0>0$ such that, if 
$u\in C^2(\overline{\Omega};\R^{n+1})$ and $\alpha$ satisfy
$$
\|u^\sharp e-g\|_0\leq \delta_0^2 \qquad \mbox{and}\qquad
0<\alpha<\min\left\{\frac{1}{1+2n_*},\frac{\beta}{2}\right\}\, ,
$$ 
then there exists a map $v\in C^{1,\alpha} (\overline{\Omega}; \R^{n+1})$ with 
$$
v^\sharp e \;=\; g\qquad\mbox{and}\qquad
\|v-u\|_{C^1} \;\leq\; C\, \|u^\sharp e-g\|_{C^0}^{1/2}\, .
$$
\end{theorem}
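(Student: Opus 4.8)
The plan is to construct $v$ as the $C^{1,\alpha}$ limit of a convergent sequence of smooth maps $u_q\in C^\infty(\overline\Omega;\R^{n+1})$ produced by iterating a single elementary building block, the \emph{Nash spiral} (or corrugation), in the spirit of the constructive scheme sketched for the toy example in Section~\ref{s:c1alpha} and for Euler in \eqref{e:constructive}. One starts from $u_0:=u$ (after a harmless mollification) and at each stage one has a metric error $h_q := g - u_q^\sharp e$, which one wants to drive to zero. The key algebraic fact is that any positive-definite symmetric $2$-tensor with small norm can be written as a finite sum $\sum_{k=1}^{n_*} a_k^2\, \nu_k\otimes\nu_k$ of rank-one "primitive metrics", with smooth coefficients $a_k>0$; here $n_*$ is the number of terms needed, and it is exactly this $n_*$ that enters the exponent $\frac{1}{1+2n_*}$ in the theorem. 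One then cancels the error $h_q$ by adding, successively for $k=1,\dots,n_*$, a highly oscillatory perturbation of the form $\frac{1}{\lambda}\bigl(\Gamma_1(\lambda\, x\cdot\nu_k)\,\zeta_k + \Gamma_2(\lambda\, x\cdot\nu_k)\,\eta_k\bigr)$, where $\Gamma_1,\Gamma_2$ are $1$-periodic with mean zero, $\zeta_k,\eta_k$ form an orthonormal normal/tangent frame adapted to the current immersion, and the amplitudes are tuned so that to leading order $d(\text{perturbation})\cdot d(\text{perturbation})$ contributes exactly $a_k^2\,\nu_k\otimes\nu_k$ while the cross terms with $du_q$ vanish to leading order. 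This is the "stage" map $u_q\mapsto u_{q+1}$.

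The next step is the estimates. Writing $\delta_q$ for the size of the metric error after $q$ stages and $\lambda_q$ for the frequency used, the heuristic from the toy model applies: one step of corrugation at frequency $\lambda$ reduces the error by a fixed geometric factor but costs a factor $\lambda$ in the $C^1$-seminorm of the second-order corrector (equivalently, $\|u_{q+1}-u_q\|_{C^0}\sim \delta_q^{1/2}/\lambda_q$, $\|u_{q+1}-u_q\|_{C^1}\sim \delta_q^{1/2}$, $\|u_{q+1}-u_q\|_{C^2}\sim \delta_q^{1/2}\lambda_q$, and the new error satisfies $\delta_{q+1}\lesssim \delta_q\lambda_q/\lambda_{q+1} + (\text{error from the }n_*\text{ substeps})$). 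The natural choice is an exponentially growing frequency $\lambda_q = \lambda_0^{b^q}$ for suitable $b>1$ and correspondingly geometrically decaying error $\delta_q = \delta_0\, a^{-q}$, $a>1$; one verifies by a careful bookkeeping of the $n_*$ substeps within each stage that the single-stage estimates close provided $b$ and the relation between $a$ and $b$ are chosen compatibly. The $C^{1,\alpha}$ convergence then follows by the standard interpolation argument: $\sum_q \|u_{q+1}-u_q\|_{C^{1,\alpha}} \lesssim \sum_q \|u_{q+1}-u_q\|_{C^1}^{1-\alpha}\|u_{q+1}-u_q\|_{C^2}^{\alpha} \lesssim \sum_q \delta_q^{1/2}\lambda_q^{\alpha}$, which is summable precisely when $\alpha$ is below the threshold dictated by the ratio $\log a/\log b$; optimizing over admissible $(a,b)$ gives the bound $\alpha < \frac{1}{1+2n_*}$. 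The extra constraint $\alpha<\beta/2$ comes from the fact that at the very first mollification step one can only exploit $C^\beta$-regularity of $g$, so the initial error one pays is of order (mollification length)$^\beta$, which must be balanced against the $C^1$-gain and again feeds through interpolation. The final $C^1$-estimate $\|v-u\|_{C^1}\le C\|u^\sharp e - g\|_{C^0}^{1/2}$ is just the sum of the telescoping $C^1$-increments, each of order $\delta_q^{1/2}$, the first of which dominates and is comparable to $\delta_0^{1/2}=\|u^\sharp e-g\|_{C^0}^{1/2}$.

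The main obstacle, and the place where the argument requires genuine care rather than bookkeeping, is the control of the \emph{error introduced by a single stage} — that is, showing that after adding the $n_*$ corrugations the new metric error $h_{q+1}$ really is smaller than $h_q$ by the claimed factor, uniformly and with constants independent of $q$. This involves: (i) the quadratic and higher Taylor remainders of the corrugation profile $\Gamma_i$, which are $O(\delta_q)$ but must be reabsorbed; (ii) the fact that the frame $\zeta_k,\eta_k$ and the amplitudes $a_k$ depend on $x$ and on the \emph{current} immersion $u_q$, so that each successive substep $k$ sees a slightly perturbed immersion, producing commutator-type errors of order $\delta_q^{1/2}\lambda_q/\lambda_{q+1}$ that are controllable only because of the frequency gap; (iii) ensuring that $u_{q+1}$ remains an immersion (the normal direction is well defined) throughout, which uses that $\|u_{q+1}-u_q\|_{C^1}$ is small, hence the smallness hypothesis $\|u^\sharp e-g\|_{C^0}\le\delta_0^2$ with $\delta_0$ chosen small in terms of $g_0$ and $r$; and (iv) the decomposition of a positive tensor into $n_*$ primitive metrics must be done with smooth, uniformly bounded coefficients on a neighborhood of $g_0$, which is where $r$ (and the value of $n_*$) is fixed. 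Assembling these estimates into a clean induction closing on the two sequences $\delta_q,\lambda_q$ is the technical heart; everything else is interpolation and summation.
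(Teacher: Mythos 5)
Your overall strategy is the same as the one behind Theorem \ref{t:local}: a Nash--Kuiper iteration in which the metric error is decomposed into $n_*$ primitive metrics, each cancelled by a corrugation at a large frequency, with the exponent $\frac{1}{1+2n_*}$ coming from interpolation between the $C^1$ and $C^2$ behaviour of the increments, exactly as in \eqref{e:stage1}--\eqref{e:stage3}. However, there is a genuine gap at what you yourself call the technical heart: as written, your single-stage estimate $\|u_{q+1}-u_q\|_{C^2}\sim\delta_q^{1/2}\lambda_q$ cannot be closed by ``careful bookkeeping'' alone, because of the loss of derivative. The amplitudes $a_k$ and the adapted frames are built from $h_q=g-u_q^\sharp e$, so their $C^1$ and $C^2$ norms are controlled only by $\|u_q\|_{C^2}$ and $\|u_q\|_{C^3}$ respectively (and by one and two derivatives of $g$, which do not even exist since $g\in C^\beta$ with $\beta$ possibly small). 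Hence estimating the new metric error and the new $C^2$ norm at stage $q$ requires quantities that your induction hypotheses on $(\delta_q,\lambda_q,\|u_q\|_{C^2})$ do not control, and the scheme degenerates after finitely many stages. The fix used in the actual proof is a Nash--Moser-type regularization performed \emph{at every stage}, not just once at the beginning: $u_q$ is mollified (and $g$ regularized) at a $q$-dependent scale before the corrugations are added, and the crucial point, provided by the quadratic commutator estimate of Proposition \ref{p:improv}, is that this mollification does not introduce any additional metric error beyond what the stage can absorb. Your ``harmless mollification'' of $u_0$ alone, and your attribution of the constraint $\alpha<\beta/2$ solely to the first mollification, miss this: the mollification scale shrinks with $q$, and the balance of the $O(\ell_q^\beta)$ error from regularizing $g$ against $\delta_{q+1}$ must be carried through the whole iteration, which is precisely where $\beta/2$ enters.

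A secondary, more mechanical problem is your parameter ansatz: pairing double-exponential frequencies $\lambda_q=\lambda_0^{b^q}$ with a merely geometric decay $\delta_q=\delta_0 a^{-q}$ makes the interpolation sum $\sum_q\delta_q^{1/2}\lambda_q^{\alpha}$ diverge for every $\alpha>0$, so no compatible choice of $a,b$ exists in that form. In the correct bookkeeping both quantities evolve geometrically: the stage reduces the error by a fixed factor $1/K$ while multiplying the $C^2$ norm by $K^{n_*}$, so $\lambda_q\sim K\|u_q\|_{C^2}/\delta_q^{1/2}$ also grows geometrically, and summability of $\delta_q^{(1-\alpha)/2}\bigl(\|u_q\|_{C^2}K^{n_*}\bigr)^{\alpha}$ holds exactly when $(1-\alpha)/2>n_*\alpha$, i.e. $\alpha<\frac{1}{1+2n_*}$. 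With the per-stage regularization (and the resulting uniform constants, which also keep $u_{q+1}$ an immersion and make the $C^1$ telescoping sum dominated by its first term $\sim\|u^\sharp e-g\|_{C^0}^{1/2}$), your outline does match the proof; without it, the induction does not close.
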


\begin{corollary}[Local h-principle]\label{c:local}
Let $n,g_0,\Omega,g,\alpha$ be as in Theorem \ref{t:local}. Given any
short map $u\in C^1(\overline{\Omega};\R^{n+1})$ and any $\e>0$ there exists an isometric immersion $v\in C^{1,\alpha}(\overline{\Omega};\R^{n+1})$ with $\|u-v\|_{C^0}\leq \e$.
\end{corollary}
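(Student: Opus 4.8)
The plan is to deduce Corollary \ref{c:local} from Theorem \ref{t:local} by first replacing the given short map $u$ with a $C^2$ (indeed smooth) map that is simultaneously $C^0$-close to $u$ and so nearly isometric that Theorem \ref{t:local} applies. The one point that dictates the structure of the argument is that the $C^0$-estimate produced by Theorem \ref{t:local} is of the form $C\,\|u^\sharp e-g\|_{C^0}^{1/2}$ with $C$ a \emph{fixed} constant; hence it is not enough to bring the metric error below the threshold $\delta_0^2$ — it must be brought below $(\e/(3C))^2$ — and this forces a preliminary multi-stage construction rather than a single application of the theorem.

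First I would pass to $u_\theta:=(1-\theta)u$ with $\theta>0$ small. Since $g\ge c_0\,\Id$ on the compact set $\overline\Omega$, one has $u_\theta^\sharp e=(1-\theta)^2u^\sharp e\le(1-\theta)^2g$, so $g-u_\theta^\sharp e$ is bounded below by $(2\theta-\theta^2)c_0\,\Id>0$; that is, $u_\theta$ is strictly short with a uniform gap. Mollifying $u_\theta$ at a small scale $\rho$ (the mollifications converge in $C^1$ because $u\in C^1$) yields a smooth, still strictly short map $\bar u$ with $\|\bar u-u\|_{C^0}<\e/3$, after choosing $\theta$ and then $\rho$ sufficiently small.

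Next I would reduce the metric error. The tensor $h:=g-\bar u^\sharp e$ is smooth and positive definite, but of size $O(1)$ rather than $O(\delta_0^2)$. Here I would run the classical Nash--Kuiper iteration \cite{Nash54,Kuiper55} (first mollifying the metric $g$, which perturbs everything only by $o(1)$ in $C^0$): each stage decomposes the current metric error into primitive metrics $\sum a_k^2\,d\xi_k^2$ and cancels them by high-frequency corrugations, producing a new smooth map whose metric error is a fixed fraction of the previous one and whose $C^0$-displacement from the previous one is controlled by $(\mathrm{metric\ error})^{1/2}/\lambda$, hence as small as we wish once the frequency $\lambda$ is taken large. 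Iterating finitely many times I obtain $u_N\in C^\infty(\overline\Omega;\R^{n+1})$ with $\|u_N^\sharp e-g\|_{C^0}\le\min\{\delta_0^2,(\e/(3C))^2\}$ and $\|u_N-\bar u\|_{C^0}<\e/3$.

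Finally, since $u_N\in C^2(\overline\Omega;\R^{n+1})$ satisfies $\|u_N^\sharp e-g\|_{C^0}\le\delta_0^2$, Theorem \ref{t:local} produces $v\in C^{1,\alpha}(\overline\Omega;\R^{n+1})$ with $v^\sharp e=g$ and $\|v-u_N\|_{C^1}\le C\,\|u_N^\sharp e-g\|_{C^0}^{1/2}\le\e/3$; this $v$ is an isometric immersion (the full rank of $Dv$ being forced by $Dv^tDv=g>0$), and $\|v-u\|_{C^0}\le\|v-u_N\|_{C^0}+\|u_N-\bar u\|_{C^0}+\|\bar u-u\|_{C^0}<\e$. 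The step I expect to need the most care is the intermediate reduction: one must keep the finitely many preliminary maps $C^2$ (automatic with smooth corrugation profiles), push the metric error quantitatively past the fixed target $(\e/(3C))^2$, and simultaneously keep the accumulated $C^0$-displacement below $\e/3$ — all standard in Nash--Kuiper theory, but requiring the frequencies to be chosen in the correct order relative to $\e$; the rest is bookkeeping with the triangle inequality.
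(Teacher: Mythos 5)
Your overall deduction is the intended one: contract and mollify to obtain a smooth strictly short map, drive the metric defect below $\min\{\delta_0^2,(\e/(3C))^2\}$ rather than merely below $\delta_0^2$ (this quantitative point is exactly right, since the $C^1$-estimate in Theorem \ref{t:local} carries a fixed constant), and only then apply Theorem \ref{t:local}; your handling of the merely $C^\beta$ metric (mollify $g$, measure the final defect against the true $g$) and the final remark that $Dv^tDv=g>0$ forces $v$ to be an immersion are also fine.

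The genuine gap is in the intermediate reduction. The classical Nash--Kuiper stage you propose to run on $\bar u$ is only defined for immersions: each step adds a primitive metric $a^2\,\nu\otimes\nu$ by a corrugation taking values in the plane spanned by a unit normal $\zeta$ of the immersed image and the normalized tangent $D\bar u\,\nu/|D\bar u\,\nu|$, and the quantitative estimates need a uniform bound $\bar u^\sharp e\geq c'\,\Id$. The corollary, however, assumes only that $u$ is a short \emph{map} (a constant map is short), and neither the contraction $(1-\theta)u$ nor mollification creates rank, so your iteration need not even start; moreover, for $n\geq 2$ the rank-deficient matrices in $\R^{(n+1)\times n}$ have codimension two, so a generic small perturbation of a map into $\R^{n+1}$ still has a nonempty singular set, and the problem cannot be waved away by ``perturb to an immersion''. (Once the iteration does start from a uniform immersion, the induced metrics are essentially nondecreasing up to $O(1/\lambda)$ errors, so the difficulty is only at the starting point.) To close the gap you must either first approximate the strictly short map by a strictly short \emph{immersion} --- a true statement, but one of $h$-principle type in its own right, not a consequence of mollification --- or invoke the Nash--Kuiper theorem as a black box in a form valid for strictly short maps (cf. \cite{Gromov}; statement (B) in the paper is phrased for short embeddings, as are \cite{Nash54,Kuiper55}), obtaining a $C^1$ isometric immersion $w$ with $\|w-u\|_{C^0}\leq \e/2$, and then simply mollify $w$: since $D(w\ast\varphi_\ell)\to Dw$ uniformly on $\overline{\Omega}$, the maps $w\ast\varphi_\ell$ are smooth with $\|(w\ast\varphi_\ell)^\sharp e-g\|_{C^0}\to 0$, so Theorem \ref{t:local} applies to them directly and gives $v$ exactly as in your last step. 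This second route also spares you the quantitative bookkeeping of re-running the Nash--Kuiper stages; with either repair the rest of your argument goes through.
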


The proof of Theorem \ref{t:local} is based on an iteration scheme which follows the method of Nash and Kuiper \cite{Nash54,Kuiper55}. 
The iteration consists of {\it stages}, and each {\it stage} consists of several {\it steps}. The purpose of a {\it stage} is 
to correct the error $g-u^\sharp e$. In order to achieve this correction, the error is decomposed into a sum of primitive metrics as
\begin{equation}\label{e:decomp}
g-u^\sharp e = \sum_{k=1}^{n_*}a_k^2\nu_k\otimes\nu_k\, .
\end{equation}
To keep the notation simpler, in what follows we will use the abbreviations
$\|\cdot \|_N$ and $\| \cdot \|_{k,\alpha}$ for the $C^N$ and $C^{k, \alpha}$ norms respectively.
The natural estimates associated with \eqref{e:decomp} are
\begin{equation*}
\begin{split}
\|&a_k\|_0\;\sim\;\|g-u^\sharp e\|_0^{1/2}\\
\|&a_k\|_{N+1}\;\sim\;\,\|u\|_{N+2}\quad\textrm{ for }N=0,1,2,\dots.
\end{split}
\end{equation*}
A {\it step} then involves adding one primitive metric. In other words the goal of a {\it step} is the metric change
$$
u^\sharp e\quad\mapsto\quad u^\sharp e+a^2\nu\otimes\nu.
$$
Nash used spiralling perturbations (also known as the "Nash twist") to achieve this; in codimension one Kuiper replaced the spirals by corrugations: in both cases the new map $\tilde{u}$ is of the form 
\[\tilde{u} (x) = u(x) + \frac{1}{\lambda} w (x, \lambda x\cdot \nu)
\]
for some appropriate choice of $w$, where $w$ is periodic in the second variable.
The parameter $\lambda$ is given by 
\[
\lambda = K \frac{\|u\|_2}{\sqrt{\|g-u^\sharp e\|_0}}
\]
where $K$ is a large (but fixed) constant.
One checks that the addition of a primitive metric is then 
possible with the following estimates:
\begin{eqnarray*}
\textrm{$C^0$-error in the metric}\;&\sim&\;\|g-u^\sharp e\|_0\,\frac{1}{K}\\
\textrm{ increase of $C^1$-norm of $u$}\;&\sim&\;\|g-u^\sharp e\|_0^{1/2}\\
\textrm{ increase of $C^2$-norm of $u$}\;&\sim&\;\|u\|_2\,K
\end{eqnarray*}
for any $K\geq 1$. Observe that the first two of these estimates is essentially the same as in \cite{Nash54,Kuiper55}. Furthermore, the third estimate is only valid modulo a "loss of derivative".  The exponent $1/2$ in the second estimate is due to the quadratic nature
of the nonlinearity.

The low codimension forces the steps to be performed serially. Thus the number of {\it steps} in a {\it stage} equals the number of primitive metrics in the above decomposition which interact. This is given by $n_* = n (n+1)/2$. 
To deal with the "loss of derivative" problem in \cite{CDSz} we mollify the map $u$ at the start of every stage, in a similar manner as is done in a Nash-Moser iteration (see \cite{Moser66}, \cite{Moser66b}). A careful estimate (see  Proposition \ref{p:improv}) shows that this can be done without any 
additional error. 
Therefore, iterating the estimates for one step over a single stage (that is, over $n_*$ steps) leads to 
\begin{eqnarray}
\textrm{$C^0$-error in the metric}\;&\sim&\;\|g-u^\sharp e\|_0\,\frac{1}{K}\label{e:stage1}\\
\textrm{ increase of $C^1$-norm of $u$}\;&\sim&\;\|g-u^\sharp e\|_0^{1/2}\label{e:stage2}\\
\textrm{ increase of $C^2$-norm of $u$}\;&\sim&\;\|u\|_2\,K^{n_*}\label{e:stage3}
\end{eqnarray}
With these estimates, iterating over the {\it stages} leads to exponential convergence of the metric error, to a controlled growth of the $C^1$ norm and to an exponential growth of the $C^2$ norm of the map. In particular, interpolation between these two norms leads to convergence in $C^{1,\alpha}$ for $\alpha<\frac{1}{1+2n_*} = \frac{1}{1+n(n+1)}$.

Recalling the analogy with the Euler equations, which amounts to identifing the velocity $v$ with the deformation gradient $Du$, 
the analogous scheme for producing $C^{0,\alpha}$ solutions of Euler would involve perturbations of the form
\[
\tilde{v} (x,t) = v (x,t) + w (x,t, \lambda x,\lambda t)
\]
(with $w$ periodic and average $0$ in the second variable). If the estimates analogous to \eqref{e:stage1}--\eqref{e:stage3}
would carry over with one single step in a stage (i.e. $n_*=1$), one would reach the exponent conjectured by Onsager.
This ansatz for the critical exponent $1/3$ is independent of considerations on the energy spectrum, giving 
an entirely new point of view on the famous Kolmogorov $5/3$ law.

%%%%%%%%%%%%%%%%%%%%%%%%%%%%%%%
\subsection{Conservation of energy and rigidity}\label{s:rigidity}\hfill
%%%%%%%%%%%%%%%%%%%%%%%%%%%%%%%

We turn next to the Weyl problem, i.e. the characterization of isometric embeddings of
$2$-dimensional positively curved surfaces in $\R^3$. Concerning the rigidity, it is known from the work of 
Pogorelov and Sabitov that 
\begin{enumerate}
\item closed $C^1$ surfaces with positive Gauss curvature 
and bounded extrinsic curvature are convex (see \cite{Pogorelov73});
\item closed convex surfaces are rigid in the sense that isometric 
immersions are unique up to rigid motion \cite{PogorelovRigidity};
\item a convex surface with metric $g\in C^{l,\beta}$ with $l\geq 2,0<\beta<1$ and positive curvature is of class $C^{l,\beta}$ (see \cite{Pogorelov73,Sabitov}).
\end{enumerate}
Thus, extending the rigidity in the Weyl problem to $C^{1,\alpha}$ isometric immersions can be reduced to showing that the image of the surface has bounded extrinsic curvature in the sense of Pogorelov. This concept is easily explained in the following way.
Let $(M,g)$ be a smooth $2$-dimensional closed Riemannian manifold and $u: M\to \R^3$ an isometric immersion.
According to Gauss' original definition, the Gauss curvature of $u(M)$ can be defined as the area distortion of the 
Gauss map $N$. In a weak sense, this is a well-defined object even when the immersion is merely a $C^1$ map, because
in this case $N$ is continuous. The immersed surface $u(M)$ is
then said to have bounded extrinsic curvature if this area distortion is bounded. 

Using geometric arguments, in a series of papers \cite{Borisov58-1,Borisov58-2,BorisovRigidity1,Borisov58-3,BorisovRigidity2} Borisov proved that for $\alpha>2/3$ the image of surfaces with positive Gauss curvature has indeed bounded extrinsic curvature. Consequently, rigidity holds in this range and in particular $2/3$ is an upper bound on the range of H\"older exponents that can be reached using convex integration. 

A short proof of Borisov's rigidity result was provided in \cite{CDSz}. Note that if $u\in C^3$ one can compute
the area distorion of the Gauss map from the Riemann-curvature tensor, which in turn depends only on the metric:
we call this scalar field $\kappa$. Even if
the metric $g$ is smooth, nonetheless this identity is in general false if the isometry is not regular enough, as shown precisely
by the Nash-Kuiper theorem. However, in \cite{CDSz} we showed that the $C^{1,\frac{2}{3} +\eps}$ regularity is enough for this identity to hold.
A key observation of the proof in \cite{CDSz} is that this identity can be expressed in the following integral
formula
\begin{equation}\label{e:ChangeOfVar}
\int_V f(N(x)) \kappa (x)\, dA(x)\;=\;
\int_{\S^2}f(y)\deg(y,V,N)\, d\sigma(y)
\end{equation}
where
\begin{itemize}
\item $V$ is an arbitrary open subset of $M$;
\item $f$ is any $C^1$ function on ${\mathbb S}^2$;
\item $\deg (y, V, N)$ is the Brouwer
degree of the map $N|_V$ at $y$.
\end{itemize}

The identity \eqref{e:ChangeOfVar} is just a change of variable formula and, surprisingly, it can be proved for 
immersion $u\in C^{1, 2/3+\eps}$ by a regularization argument. The corresponding computations in \cite{CDSz} rely on commutator
estimates which are surprisingly close to the argument of \cite{ConstantinETiti} for proving the energy 
conservation of $C^{0,\alpha}$ solutions of Euler when $\alpha>1/3$. 
In the case of isometric embeddings there does
not seem to be a universally accepted critical exponent (cp. Problem 27 of \cite{Yau}), even though 1/2 and 1/3 both
seem relevant (cp. with our discussion below and with the discussion in \cite{Borisov2004}).
The key commutator estimates of \cite{CDSz} are given by the following proposition, which can be
compared to \cite{ConstantinETiti}.

\begin{proposition}
[Quadratic estimate]\label{p:improv}
Let $\Omega\subset \R^n$ be an open set, 
$v\in C^{1, \alpha} (\Omega, \R^m)$ and $e$ the Euclidean metric on $\R^m$.
Consider the corresponding pull-back metric
$(v^\sharp e )_{ij} = \partial_i v \cdot \partial_j v$. Assume $v^\sharp e\in C^2$ and let
$\varphi\in C^\infty_c (\R^n)$ a standard symmetric
convolution kernel. Then, for every compact set 
$K\subset \Omega$, 
\begin{equation}\label{e:improv}
\|(v*\varphi_\ell)^\sharp e - v^\sharp e\|_{C^{1} (K)} \;=\;  
O (\ell^{2\alpha -1}).
\end{equation}
\end{proposition}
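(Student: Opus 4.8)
The plan is to read \eqref{e:improv} as a commutator estimate in the spirit of \cite{ConstantinETiti}, applied to the first derivatives of $v$, which belong to $C^{0,\alpha}$ on compact subsets of $\Omega$ since $v\in C^{1,\alpha}$. Writing $v=(v^1,\dots,v^m)$ and using $\partial_i(v^a*\varphi_\ell)=(\partial_i v^a)*\varphi_\ell$, I would split
\[
\big((v*\varphi_\ell)^\sharp e\big)_{ij}-(v^\sharp e)_{ij}=\sum_{a=1}^m\Big[(\partial_i v^a*\varphi_\ell)(\partial_j v^a*\varphi_\ell)-(\partial_i v^a\,\partial_j v^a)*\varphi_\ell\Big]+\Big((v^\sharp e)_{ij}*\varphi_\ell-(v^\sharp e)_{ij}\Big).
\]
The last term is the elementary mollification error of the $C^2$ function $v^\sharp e$ (this is precisely where the hypothesis $v^\sharp e\in C^2$ enters, $v^\sharp e$ being only $C^{0,\alpha}$ a priori), hence $O(\ell)$ in $C^1(K)$; since $\ell\le\ell^{2\alpha-1}$ for $\ell\le1$ and $\alpha\le1$, it is absorbed into the claimed bound. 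Thus the whole statement reduces to estimating, for $f,g\in C^{0,\alpha}$ on a compact neighbourhood of $K$, the bilinear commutator $C_\ell(f,g):=(f*\varphi_\ell)(g*\varphi_\ell)-(fg)*\varphi_\ell$ in $C^1(K)$.

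First I would record the symmetric double-integral representation
\[
C_\ell(f,g)(x)=-\tfrac{1}{2}\iint\varphi_\ell(y)\varphi_\ell(z)\,\big[f(x-y)-f(x-z)\big]\,\big[g(x-y)-g(x-z)\big]\,dy\,dz,
\]
which follows from $\int\varphi_\ell=1$ and symmetrisation in the two mollification variables. Taking $\ell$ small enough that $x-\supp\varphi_\ell$ remains inside a fixed compact set $K'\subset\Omega$ for every $x\in K$, and using $|f(x-y)-f(x-z)|\le[f]_{C^\alpha(K')}\,|y-z|^\alpha\le C\,[f]_{C^\alpha(K')}\,\ell^\alpha$ on $\supp\varphi_\ell\times\supp\varphi_\ell$, this gives at once $\|C_\ell(f,g)\|_{C^0(K)}=O(\ell^{2\alpha})$.

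The crux is the bound on $\nabla C_\ell(f,g)$. Since the second derivatives of $v$ are merely distributions, the integrand cannot be differentiated in $x$ directly; instead I would change variables $a=x-y$, $b=x-z$, so that the $x$-dependence sits entirely in the product $\varphi_\ell(x-a)\varphi_\ell(x-b)$, differentiate there, and revert to obtain
\[
\partial_k C_\ell(f,g)(x)=-\tfrac{1}{2}\iint\big[(\partial_k\varphi_\ell)(y)\varphi_\ell(z)+\varphi_\ell(y)(\partial_k\varphi_\ell)(z)\big]\big[f(x-y)-f(x-z)\big]\big[g(x-y)-g(x-z)\big]\,dy\,dz.
\]
The telescoped difference structure in $f$ and $g$ is preserved, so the same two Hölder bounds apply, while $\|\partial_k\varphi_\ell\|_{L^1}=\ell^{-1}\|\partial_k\varphi\|_{L^1}$ produces a single factor $\ell^{-1}$; hence $\|\nabla C_\ell(f,g)\|_{C^0(K)}=O(\ell^{2\alpha-1})$. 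Combining with the $C^0$ bound, summing over $a$, and adding the $O(\ell)$ mollification error yields \eqref{e:improv}, with implicit constant depending on $\varphi$, on $[\nabla v]_{C^\alpha(K')}$ and $\|v^\sharp e\|_{C^2(K')}$, and on $\dist(K,\partial\Omega)$.

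I expect the main obstacle to be exactly this low regularity of $v$: the commutator $C_\ell(f,g)$ is a smooth function of $x$, yet its gradient must not be estimated through derivatives of $f=\partial_i v^a$ or $g=\partial_j v^a$, which are mere distributions; the derivative has to be transferred onto the mollifier while keeping the difference structure $f(x-y)-f(x-z)$ intact, so that the two gained factors $\ell^\alpha$ survive to beat the loss $\ell^{-1}$ coming from $\partial_k\varphi_\ell$. Once the correct symmetric representation is in place, the remaining estimates are entirely routine.
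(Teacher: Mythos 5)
Your argument is correct and follows essentially the same route as the proof in \cite{CDSz} that the paper refers to: split $(v*\varphi_\ell)^\sharp e - v^\sharp e$ into the mollification error of the $C^2$ tensor $v^\sharp e$ plus the quadratic commutator $(f*\varphi_\ell)(g*\varphi_\ell)-(fg)*\varphi_\ell$ in $f,g=\partial_i v^a\in C^{0,\alpha}$, and estimate the latter in the Constantin--E--Titi style by keeping the difference structure and letting the $x$-derivative fall on the mollifier, gaining $\ell^{2\alpha}$ against the loss $\ell^{-1}$. Your symmetrized double-integral identity is a minor variant of the representation used there, so no further comments are needed.
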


In particular, fix a map $u$ and a kernel
$\varphi$ satisfying the assumptions of
the Proposition with $\alpha>1/2$.
Then the Christoffel symbols of $(v*\varphi_\ell)^\sharp e$
converge {\em uniformly} to those of $v^\sharp e$. This corresponds to the results of Borisov in
\cite{Borisov58-1,Borisov58-2}, and hints at the
absence of $h$--principle for $C^{1,\frac{1}{2}+\eps}$ immersions.
One might further notice that the regularity $C^{1,\frac{1}{3}+\eps}$ is still enough
to guarantee a very weak notion of convergence of the corresponding Christoffel symbols.

%%%%%%%%%%%%%%%%%%%%%%%%%%%%%%%
\section{Outlook and further problems}
%%%%%%%%%%%%%%%%%%%%%%%%%%%%%%%

We list in this section several open questions which remain to be addressed.

\begin{problem}
Analogues of Theorem \ref{t:shear} for compressible Euler.
\end{problem}
Different behaviours are expected for different types of Riemann problems. The direct
analogue of Theorem \ref{t:shear} would be a contact discontinuity, and it seems likely that
some form of the statement survives for compressible Euler. Nevertheless, because of the
different role of the pressure term, a direct transfer from the incompressible to the compressible case (as
was done for Theorem \ref{t:psistema} in \cite{DS2}) seems impossible. 

\begin{problem}
Compute the hull for compressible Euler.
\end{problem}
The proof of Theorem \ref{t:psistema} essentially relies on the observation that
Theorem \ref{t:criterion} gives weak solutions of the incompressible Euler system \eqref{e:Cauchy} 
with fixed $|v|^2$ and fixed pressure $p$. Thus, an appropriate choice automatically yields
weak solutions of the compressible Euler system \eqref{e:psistema}.  Within the general framework of 
\eqref{e:LRbis}-\eqref{e:CRbis} this route via the incompressible
Euler equations amounts to restricting to a smaller constitutive set $K$ for compressible Euler, where $\rho$ is fixed to be constant.
A further step was obtained recently by Chiodaroli in Theorem \ref{t:chiodaroli}, but the general case, which would allow for wild oscillations in the density as well as the velocity, remains open. In other words, one should define the state space and the constitutive set for compressible Euler in full generality, and compute the hull. This would lead to the correct definition of subsolution, and thus eventually to the solution of the next problem:

\begin{problem}
Analogue of Theorem \ref{t:mv} for compressible Euler.
\end{problem}

As noted in Section \ref{s:rigidity}, although there is a surprising number of similarities between
weak solutions of the Euler equations and the Nash-Kuiper theorem, in contrast to Euler there are no natural conjectures 
for critical H\"older exponents for isometric embeddings. Therefore, a central question is: 

\begin{problem}
Find the optimal H\"older-exponent for Theorem \ref{t:local} and Corollary \ref{c:local}.
\end{problem}

It should be pointed out that for incompressible Euler the exponent $1/3$ is conjectured to be critical with respect to the 
\emph{energy conservation}. It is not known whether being above this exponent implies uniqueness. On the other hand 
the rigidity statement in Section \ref{s:rigidity} is a uniqueness statement. A conceivable
scenario is then the existence of other regimes between the exponents $1/3$ and $2/3$ for isometric embeddings,
where the rigidity does not hold but the $h$-principle is also excluded.

In Section \ref{s:C1alpha} we restricted our attention to isometric embeddings in codimension 1. Looking closely at the construction it
is natural to expect that higher codimension for the embedding should result in better regularity. Indeed, for the case of sufficiently large codimension A.~K\"allen showed in \cite{Kallen} that the regularity in Theorem \ref{t:local} and Corollary \ref{c:local} can be pushed up to any $\alpha<1$, provided the metric is sufficiently smooth!
 
\begin{problem}
Analyze the effect of higher codimension on the regularity in Theorem \ref{t:local} and Corollary \ref{c:local}.
\end{problem}
In other words, interpolate between Theorem \ref{t:local} (codimension 1) and K\"allen's result (codimension sufficiently large).

\bigskip

Regarding Onsager's conjecture the next important question is how to modify convex integration so that 
the weak solutions in Theorem \ref{t:criterion} are H\"older continuous.
\begin{problem}
Construct continuous and H\"older-continuous solutions of incompressible Euler.
\end{problem}
An explicit construction could lead to insights concerning several assumptions usually made in the statistical theory of turbulence (such as
the localness of interactions between eddies, cp. \cite{CCFS}, or the very idea of an energy cascade). It would also open the way towards
proving a true homotopy-principle for weak solutions, in the long run perhaps shedding light on the global topology of the moduli space of solutions.

\bigskip

\begin{problem}
Calculate/estimate the maximal dissipation rate for wild initial data. 
\end{problem}
By a simple contradiction argument it can be seen that there has to be a finite maximal rate. 
A first example might be the shear flow, cp. with Theorem \ref{t:shear}. For the very similar problem
of the incompressible porous medium equation, see Theorem \ref{t:lsz-ipm}, the maximal expansion rate of
the mixing zone has been obtained in \cite{Otto2}. 

Apart from a dissipation rate principle, the natural candidate for a selection principle for 
the incompressible Euler equations is the vanishing viscosity limit. Of course this has several major technical stumbling blocks, but 
already a ``recovery sequence'' in the vanishing viscosity limit would be of interest:
\begin{problem}
Recovery sequence for weak solutions of Euler from Navier-Stokes.
\end{problem}
It is conceivable, that one would need to consider Navier-Stokes with a random perturbation. 
Indeed, adding noise is not just physically motivated, but in some sense necessary: 
without noise the statement is false (see Theorem \ref{t:shear}).

\begin{problem}
Probabilistic approach to convex integration. 
\end{problem}
Convex integration can be seen as a control problem: at each step of the iteration, one has to choose
an admissible perturbation, consisting essentially of a (plane-)wave direction and a frequency. The following
is therefore a natural question: are there meaningful optimality conditions?

Finally, up to now there has been no attempt to link weak solutions and/or subsolutions to generalized flows or to Brenier's sharp measure-valued solution \cite{Brenier1999}:
\begin{problem}
Relation of subsolutions to generalized flows. 
\end{problem}

%\bibliographystyle{acm}
%\bibliography{eulerbib}

\end{document}